\newcommand{\id}{\operatorname{Id}}
\newcommand{\Hom}{\text{\rm Hom}}
\newcommand{\End}{\operatorname{End}}
\newcommand{\inv}{^{-1}}
\newcommand{\Uq}{U_q(\mathfrak{gl}_n)}
\newcommand{\Hb}{\mathcal{H}_{Q,q}^B(d)}
\renewcommand{\mod}{\operatorname{mod}}
\renewcommand{\id}{\operatorname{id}}
\newcommand{\res}{\operatorname{Res}}
\newcommand{\td}{^{\otimes d}}
\newtheorem{thm}{Theorem}[section] 
\newtheorem{lem}[thm]{Lemma}
\newtheorem{cor}[thm]{Corollary}
\newtheorem{prop}[thm]{Proposition}
\theoremstyle{definition}
\newtheorem{ex}[thm]{Example}
\newtheorem{defn}[thm]{Definition}
\newtheorem{rem}[thm]{Remark}
\newtheorem{assumption}[thm]{Assumption}
\newcommand{\GL}{\operatorname{GL}}
\newcommand{\PBd}{\mathcal{P}_{Q,q}^d}
\newcommand{\half}{\frac{1}{2}}
\newcommand{\SB}{S^B_{Q,q}}
\newcommand{\HB}{\mathcal{H}_{Q,q}^B(d)}
\newcommand{\apde}{\mathcal{AP}^{d,e}_q}
\newcommand{\pmpower}{^a_+\otimes ^b_-}
\begin{document}

\title{Polynomial functors and two-parameter quantum symmetric pairs}
\author{Valentin Buciumas}
\address{School of Mathematics and Physics,
The University of Queensland, 
St. Lucia, QLD 4072, 
Australia}
\email{valentin.buciumas@gmail.com}
\author{Hankyung Ko}
\address{ Department of Mathematics, Uppsala University, Box. 480, SE-75106, Uppsala, Sweden}
\email{hankyung.ko@math.uu.se}
\date{}
\keywords{quantum symmetric pair, polynomial functors, $q$-Schur algebra, Schur-Weyl duality}
\subjclass[2010]{17B37, 20G43}

\maketitle

\begin{abstract}
We develop a theory of two-parameter quantum polynomial functors. Similar to how (strict) polynomial functors give a new interpretation of polynomial representations of the general linear groups $\GL_n$, the two-parameter polynomial functors give a new interpretation of (polynomial) representations of the quantum symmetric pair $(U_{Q,q}^B(\mathfrak{gl}_n), U_q(\mathfrak{gl}_n) )$ which specializes to type AIII/AIV quantum symmetric pairs. The coideal subalgebra $U_{Q,q}^B(\mathfrak{gl}_n)$ appears in a Schur-Weyl duality with the type B Hecke algebra $\mathcal H^B_{Q,q}(d)$. We endow two-parameter polynomial functors with a cylinder braided structure which we use to construct the two-parameter Schur functors. Our polynomial functors can be precomposed with the quantum polynomial functors of type A producing new examples of action pairs. 
\end{abstract}

\tableofcontents

\section{Introduction}

Polynomial functors are endofunctors on the category of vector spaces that are polynomial on the space of morphisms. They are related to the polynomial representations of $\operatorname{GL}_n$ in the sense that the degree $d$ polynomial functors are equivalent to the degree $d$ representation of $\GL_n$ when $n\geq d$ (this correspondence passes through the Schur algebra). Two quantizations of polynomial functors were developed by Hong and Yacobi~\cite{HY} (first) and by the authors~\cite{BuciumasKo1}. The first category is related to the polynomial representation theory of the quantum group $\Uq$. The second category is related to a ``higher degree'' quantization of $\GL_n$~\cite[Corollary 6.16]{BuciumasKo1}; it is more complicated than the category from~\cite{HY} and was constructed in order to define composition of quantum polynomial functors. Composition is a natural operation on functors which is useful in performing cohomological computations. For example, it enables 
Friedlander and Suslin~\cite{FS} to prove the cohomological finite generation of finite group schemes.

In the present paper we define and study $\emph{two-parameter quantum polynomial functors}$. These polynomial functors are related to the representation theory of a certain coideal subalgebra $U_{Q,q}^B$ (to be defined in Section~\ref{coidealschurdef}) in the same way that classical polynomial functors are related to the representation theory of $\GL_n$. Many of the properties of classical or quantum polynomial functors have (sometimes surprising) analogues for two-parameter polynomial functors, as we show in this paper. 

A quantum symmetric pair is a pair of algebras $B\subset U_q(\mathfrak{g})$ where $\mathfrak{g}$ is a simple Lie algebra and $B$ is constructed from an involution $\theta$ of $\mathfrak{g}$. The subalgebra $B$ has the following property: by restricting the comultiplication $\Delta$ of $U_q(\mathfrak{g})$ to $B$, one obtains a map $\Delta: B \to B \otimes U_q(\mathfrak{g})$. The subalgebra $B$ is also called a coideal subalgebra for this reason.
Such coideal subalgebras have been studied in special cases using solutions of the reflection equation by Noumi, Sugitani, and Dijkhuizen~\cite{Noumi, NS, NSD} and in general by Letzter~\cite{Letzter99, Letzter03}. For more details about quantum symmetric pairs and their applications see the introduction to the paper of Kolb \cite{Kolbaffine} where an affine version of the theory of quantum symmetric pairs is developed.

In this work, we restrict our attention to a specific type of coideal subalgebra $U_{Q,q}^B$
. The motivation for studying this coideal subalgebra is manifold. It is part of a quantum symmetric pair that comes with solutions of the reflection equation and is in (Schur-Weyl) duality with the unequal parameter Hecke algebra of type B. It also plays a major role in many recent works in representation theory. 

We first mention two important independent works where the coideal $U_{Q,q}^B$ and its specializations play a key role. In Bao and Wang~\cite{BW13}, a theory of canonical bases for the coideal subalgebra $U_{q,q}^B$ (denoted by $\mathbf{U}^\iota$ and $\mathbf{U}^\jmath$ in Sections 2.1 and 6.1) is initiated and used to obtain decomposition numbers for the BGG category $\mathcal{O}$ of the Lie superalgebra $\mathfrak{osp}(2m+1|2n)$. 
The coideal at $q=1$ appears as an algebra generated by certain translation functors.

In~\cite{ES}, Ehrig and Stroppel study a 2-categorical action of the coideal $U_{1,q}^B$ on a  parabolic BGG category $\mathcal{O}$ of type D which categorifies an exterior power of the natural representation of the coideal. This process produces canonical bases for the aforementioned coideal modules. 
A Howe duality for the coideal subalgebra surprisingly emerges.

These works started a new wave of interest in quantum symmetric pairs and their applications to representation theory. 
Bao and Wang started a program of studying canonical bases for quantum symmetric pairs \cite{BW13, BWAJM, BKinvolution, Bao, BWInv, BW19} which generalizes Lusztig's theory of canonical basis for $\Uq$~\cite{LusztigCB1}. In related work of Balagovic and Kolb~\cite{BK}, the universal $K$-matrix is constructed for a large class of quantum symmetric pairs including the ones appearing in this work (the universal $K$-matrix for $U_{q,q}^B$ was first written down in \cite[\S 2.5]{BW13}). The universal $K$-matrix produces solutions to the reflection equation similar to how the universal $R$-matrix produces solutions to the Yang-Baxter equation. The search for such solutions of the reflection equation is motivated by the theory of solvable lattice models with U-turn boundary conditions and the study of invariants for braids in a cylinder (according to the work of tom Dieck and H\"{a}ring-Oldenburg~\cite{tD, tDHO, HO}).    

A natural continuation of the work \cite{BW13} is the work of Bao \cite{Bao}, where canonical bases for the specialization $U_{1,q}^B$ are studied, and decomposition numbers for the BGG category $\mathcal{O}$ of $\mathfrak{osp}(2m|2n)$ are obtained. The two papers~\cite{BW13, Bao} establish a Schur-Weyl duality between the coideal subalgebras $U_{q,q}^B$ and $U_{1,q}^B$, and the Hecke algebra $\mathcal{H}_{q,q}^B(d)$ and $\mathcal{H}_{1,q}^B(d)$, respectively (see also \cite{ES} for the $Q=1$ Schur-Weyl duality and \cite{Greenhyper} for a general Schur-Weyl duality without the quantum symmetric pair). The two Schur-Weyl dualities are generalized to a duality between $U_{Q,q}^B$ and $\Hb$ in~\cite{BWW}. The Schur-Weyl duality tells us that a large part of the representation theory of $U_{Q,q}^B$ is encoded in the centralizers of $\HB$ acting on $V_n\td$. This is the starting point of our definition of two-parameter quantum polynomial functors. 

Let $k$ be a field and $Q, q \in k^\times$ and let $\mathcal{C}^B_d$ be the full subcategory of $\HB$-modules (over $k$) of the form $V_n\td$ where the Hecke algebra $\Hb$ acts on a space $V_n\td$ as in equation~\eqref{HeckeactionBC}.
We define two-parameter quantum polynomial functors of degree $d$ as linear functors from the category $\mathcal{C}^B_d$ to the category of vector spaces, that is, we let \[\mathcal{P}_{Q,q}^d=\operatorname{mod}_{\mathcal{C}^B_d}.\]  We prove 
the category $\mathcal{P}^d_{Q,q}$ is equivalent to the category of finite dimensional representations of the two-parameter Schur algebra \[S_{Q,q}^B(n;d):=\operatorname{End}_{\HB}(V_n\td) \] when $n\geq 2d$ is odd. If $Q,q$ are generic, we do not need to require $n$ to be odd (see Setup at the end of the Introduction for what generic means). 
The algebra $S_{Q,q}^B(n;d)$ generalizes the $q$-Schur algebra of Dipper and James and is the main subject of study of the papers \cite{BKLW, LL, LNX}. In particular, \cite[Theorem 3.1.1]{LNX} shows that $\SB(n;d)$ is isomorphic to a direct sum of tensor products of type A $q$-Schur algebras under a small (necessary) restriction on $Q,q$. 

Our construction of polynomial functors and the proof of representability from Section~\ref{sec:quantumpolfunctors} is based on a Schur-Weyl duality and does not use any other property of the coideal $U_{Q,q}^B$. We know our construction and proof work in the setting of~\cite{FanLi,ES} where a Schur-Weyl duality involving the Hecke algebra of type D appears. We expect it to work in many other settings possibly including~\cite{ATY, HuShoji,SaSho,Sho,MazorchukStroppel} where Schur-Weyl dualities appear. The super polynomial functors of Axtell~\cite{Axtell} are also based on the Schur-Weyl dualities of Sergeev~\cite{Sergeev}.

The theory of polynomial functors we develop interacts with type A quantum polynomial functors in two ways. The first interaction is via composition. 

Composition between type A quantum polynomial functors $\mathcal{AP}_{q}^d$ (see Example~\ref{ex:qpf} for the definition) for $q\neq1$ is not possible. See the Introduction to \cite{BuciumasKo1} for a comprehensive discussion explaining this fact. In \cite{BuciumasKo1}, the authors define ``higher degree'' quantum polynomial functors $\mathcal{AP}_q^{d,e}$ (the category $\mathcal{AP}^{d,e}_q$ is denoted in \cite{BuciumasKo1} by $\mathcal P^d_{q,e}$) and define a composition functor $\circ_A : \mathcal{AP}_{q}^{d_1,d_2 e} \times \mathcal{AP}_{q}^{d_2,e} \to \mathcal{AP}_q^{d_1 d_2,e}$. 
The categories $\mathcal{AP}_{q}^{d,e}$ are quantizations of the category of classical polynomial functor $\mathcal{P}^d$ (in the sense of $\mathcal{AP}_{q=1}^{d,e} \simeq \mathcal{P}^d$) but are more complicated: for example we do not know the number of non-isomorphic simple objects in $\mathcal{AP}_{q}^{d,e}$.

In our setting, one cannot hope to define composition of quantum polynomial functors because we cannot take the tensor power of general $U_{Q,q}^B$-modules. In Section~\ref{sec:composition} we define higher degree two-parameter quantum polynomial functors $\mathcal{P}_{Q,q}^{d,e}$ and prove that there is a composition $\circ: \mathcal{P}_{Q,q}^{d_1,d_2 e} \times \mathcal{AP}_{q}^{d_2,e} \to \mathcal{P}_{Q,q}^{d_1 d_2,e}$ that makes the type B higher degree polynomial functors together with type A higher degree polynomials into an action pair. This structure is natural in the setting of polynomial functors while not in the setting of Schur algebra modules. Composition for classical polynomial functors is related to an operation on symmetric polynomials known as plethysm. It would be interesting to understand the analog of plethysm related to our composition between type A and type B quantum polynomial functors (for an introduction to classical plethysm see Macdonald~\cite[Section I.8]{Macdonald}). 

We emphasize that the composition between type A and type B quantum polynomial functors produces what we believe are new, non-trivial examples of action pairs. These examples are different to the examples of the (cylinder braided) action pairs we produce in Section~\ref{sec:cylindertwist}. The latter examples have appeared in a different setting in the work of Kolb and Balagovic and reflect the fact that $U_{Q,q}^B$ is a coideal of $\Uq$.

Higher degree polynomial functors are 
related to certain generalizations of the Schur algebra which we call $e$-Schur algebras and denote by $S_{q}^A(n;d,e)$ and $S_{Q,q}^B(n;d,e)$ (the former was initially defined in~\cite{BuciumasKo1}). They are defined via $e$-Hecke algebras $\mathcal{H}^A_q(d;e)$ and $\mathcal{H}^B_{Q,q}(d;e)$ which live inside the ordinary Hecke algebras $\mathcal{H}^A_q(de)$ and $\mathcal{H}^B_{Q,q}(de)$, respectively; they are higher quantizations of the Weyl groups $W^A_d$ and $W^B_d$, respectively.
See Figure~\ref{matingdragonflies} for the relation between such Schur and Hecke algebras.

The second interaction of type A and type B quantum polynomial functors is presented in Section~\ref{sec:cylindertwist} where we show that the restriction of $\mathcal{AP}_q=\oplus_d \mathcal{AP}^d_q$ to $\mathcal{P}_{Q,q}=\oplus_d \mathcal{P}^d_{Q,q}$ forms a cylinder braided action pair with $\mathcal{AP}_q$. We explain how to generalize this result to higher degree polynomial functors in Remark~\ref{rem:highercylinder}. There also exists a higher degree action of the category $\oplus_d \mathcal{AP}_q^{d,e}$ on $\oplus_d \mathcal P_{Q,q}^{d,e}$ which leads to a new cylinder braided action pair.
The notion of a cylinder braided action pair due to tom Dieck and H\"{a}ring-Oldenburg~\cite{tD, tDHO, HO}, generalizes the notion of a braided monoidal category to a setting where one has categorical solutions of the Yang-Baxter equation and the reflection equation. The quantum symmetric pair $(U_{Q,q}^B, \Uq)$ produces a main example of such a pair. The cylinder braided action pair has an interesting generalization. In \cite[Section 4]{BK} the notion of a braided tensor category with a cylinder twist is developed (Balagovic and Kolb use the term `braided tensor category with a cylinder twist' for what we call cylinder braided action pair); in this generalization, all finite quantum symmetric pairs produce examples of such categories.
A slightly stronger notion than a cylinder braided action pair is that of a braided module category defined in~\cite[\S 4.3]{Enriquez} (see also~\cite[\S~5.1]{Brochier}). Kolb~\cite{KolbBMC}
showed all quantum symmetric pairs for $Q,q$ generic produce such module categories up to twist. Our category of polynomial functors can also be shown to produce braided module categories (see Remark~\ref{rembmc}).



In type A, the tensor power has two distinguished quotients, namely the symmetric power and the exterior power.
In our setting, the two-parameter symmetric power and the exterior power both have two distinguished quotients.
We define them in Section~\ref{sec:pmsymext} and call them the $\pm$-symmetric power, denoted by $S^d_\pm$, and the $\pm$-exterior power, denoted by $\wedge^d_\pm$. They depend on positive and negative eigenvalues of the $K$-matrix, similar to how type A symmetric and exterior power depend on positive and negative eigenvalues of the $R$-matrix.
 These are the most basic examples of the Schur functors and are the building blocks for other Schur functors.
 
In \S~\ref{BZstuff} we define higher degree $\pm$ symmetric and exterior powers. The definition makes crucial use of Corollary~\ref{evKmatrix} where we essentially show that action of the $U_{Q,q}^B$-universal $K$-matrix on any $\Uq$ module has eigenvalues of the form $\pm Q^iq^j$ for $i,j \in \mathbb{Z}$. These examples of higher degree two-parameter quantum polynomial functors should be thought of as the generalization of the type A quantum symmetric and exterior powers due to Berenstein and Zwicknagl~\cite{BZ}.

In Section~\ref{sec:schur}, we construct the Schur functors in $\mathcal{P}_{Q,q}$ analogous to the classical construction of Akin-Buchsbaum-Weyman \cite{ABW}. A classical Schur functor is defined as the image of the conjugation
\[\wedge^{\lambda'}=\wedge^{\lambda'_1}\otimes\cdots\otimes \wedge^{\lambda'_r}\to S^\lambda=S^{\lambda_1}\otimes\cdots\otimes S^{\lambda_l},\]
where $\lambda=(\lambda_1,\cdots,\lambda_r)$ is a partition and $\lambda '=(\lambda'_1,\cdots,\lambda'_l)$ is its transpose.
In our setting, the $\pm$-symmetric/exterior powers defined in Section~\ref{sec:pmsymext} play the role of the symmetric/exterior powers. However, we are unable to define the tensor product of $\pm$-symmetric/exterior powers since they are coideal modules, and not bialgebra modules. Therefore the obvious generalization fails and we need a new idea.     
Our idea is to define a ``deformed tensor product'' of $U_{Q,q}^B$-modules by using the cylinder braided action from Section~\ref{sec:cylindertwist} (an example of deformed tensor products is presented in Definition \ref{defpmpower}) and use it to define the Schur functor. We then write the Schur functor in equation~\eqref{eq:schurexactsequence} generalizing the type A definition of the Schur functor. 
It is defined as the image of a(n induced) conjugation
\[ 
\wedge^{(\lambda',\mu')} \to S^{(\lambda, \mu)},
\] 
where $\wedge^{(\lambda',\mu')}$ is a deformed tensor product of $\wedge_+^{\lambda'_1},\cdots,\wedge_+^{\lambda'_r},\wedge_-^{\mu'_1},\cdots,\wedge_-^{\mu'_l.}$ and $S^{(\lambda, \mu)}$ is similarly a deformed tensor product.
See Definition~\ref{def:schurfunctor} and equation~\eqref{eq:schurexactsequence} for details.

If $Q,q$ are generic, the Schur functors form a complete set of simple objects in the category $\mathcal{P}_{Q,q}$. 
In the non-generic case, we expect that the Schur functors form a complete set of costandard objects whenever $\mathcal{P}_{Q,q}$ is a highest weight category. The latter is true under a small restriction on $Q,q$. 

Our definition of Schur functors can be `lifted' to the setting of higher degree polynomial functors as we explain in \S~\ref{subsec:higherschur}. The result is a class of interesting objects in $\mathcal{P}^{d,e}$ and $\mathcal{AP}^{d,e}$ and is a first step towards understanding the categories $\mathcal{P}^{d,e}$ and $\mathcal{AP}^{d,e}$.

\medskip

\noindent \textbf{Setup.} 
Unless otherwise stated, we assume that $k$ is a field and $Q,q\in k^\times$. 

In a few places, we use the stronger assumption that $k=\mathbb C$ and $Q,q\in k$ are such that $Q^iq^j\neq 1$ for all $i,j\in\mathbb Z$ (in particular $Q,q$ are not roots of unity).
For convenience, we refer to this assumption by saying $Q,q$ are \emph{generic} or by using the term `\emph{generic case}'.
\medskip

\noindent
\textbf{Acknowledgements.} 
We thank Huanchen Bao, Chun-Ju Lai and Catharina Stroppel for useful discussions. We thank Catharina Stroppel for valuable comments on an earlier version of the paper. We thank the referees for many helpful comments. 
Part of the work in this paper was done while the first author visited the Max Planck Institute for Mathematics in Bonn; both authors would like to thank the institute  for hospitality and good working conditions.

Buciumas was supported by ARC grant DP180103150. Ko was supported by the Max Planck Institute for Mathematics in Bonn.

\section{Quantum symmetric pairs and Schur-Weyl dualities}
 We introduce the basic objects which are used throughout the paper: the quantum group $\Uq$, the coideal subalgebra $U_{Q,q}^B$ and the two-parameter Hecke algebra of Coxeter type BC which we denote by $\Hb$. 
We review a Schur-Weyl duality between $\Hb$ and $U_{Q,q}^B$. That is the basis for our definition of two-parameter quantum polynomial functors.

\subsection{Hecke algebras}\label{subsec:hecke}

\subsubsection{Definition}
Denote the Weyl group of type BC of rank $d$ by $W^B(d)$. It is the Coxeter group with generators $s_i, 0 \leq i \leq d-1$ and relations

\begin{equation*}
\begin{aligned}[c]
&s_i^{2}=1\\
&s_i s_{i+1} s_i = s_{i+1} s_i s_{i+1} \\
&s_0s_1s_0s_1=s_1s_0s_1s_0, \\
&s_i s_j=s_j s_i \\
\end{aligned}
\hspace{1cm}
\begin{aligned}[c]
&\textnormal{ for } i \geq 0,\\
&\textnormal{ for } i > 0,\\
&    \\
&\textnormal{ for } |i-j| > 1.
\end{aligned}
\end{equation*}
The elements $s_i\in W^B(d)$ for $i>0$ generate a subgroup isomorphic to $W^A(d)$, the Weyl group of type A (otherwise known as the symmetric group $S_d$).

Let $\mathcal{H}_{Q,q}^{B}(d)$ be the two-parameter Hecke algebra of type BC~\cite{LusztigHeckebook}. It is presented by generators $T_0, T_1,\cdots, T_{d-1}$ satisfying the relations 
\begin{equation}\label{HeckedefinitionBC}
\begin{aligned}[c]
&(T_0+Q)(T_0-Q^{-1})=0,\\
&(T_i+q)(T_i-q^{-1})=0\\
&T_i T_{i+1} T_i = T_{i+1} T_i T_{i+1} \\
&T_0T_1T_0T_1 = T_1 T_0 T_1 T_0, \\
&T_iT_j = T_j T_i \\
\end{aligned}
\hspace{1cm}
\begin{aligned}[c]
&\\
&\textnormal{ for } i > 0,\\
&\textnormal{ for } i > 0,\\
&    \\
&\textnormal{ for } |i-j| > 1.
\end{aligned}
\end{equation}
Note that the generators $T_1, \cdots, T_{d-1}$ generate a subalgebra of $\mathcal{H}_{Q,q}^{B}(d)$ isomorphic to the Hecke algebra $\mathcal{H}_q^A(d)$ of type A. 

Given an element $w\in W^B(d)$, we write $T_w=T_{i_1}\cdots T_{i_l}$ where $s_{i_1}\cdots s_{i_l}$ is a reduced expression of $w$. The element $T_w\in \mathcal{H}_{Q,q}^{B}(d)$ does not depend on the reduced expression. The elements $T_w$ for $w \in W^B(d)$ form a basis of $\mathcal{H}_{Q,q}^B(d)$.

\subsubsection{Action on the tensor space}\label{sssec:actionontensor}

Set $n=2r+1$ or $2r$ and denote $\mathbb{I}:=\mathbb{I}_n$. 
If $n$ is even we define $\mathbb{I}_{2r}:=\{-\frac{2r-1}{2}, \cdots, -\frac{1}{2}, \frac{1}{2}, \cdots ,\frac{2r-1}{2}\}$ and otherwise we let $\mathbb{I}_{2r+1}:= \{-r, \cdots ,-1, 0, 1, \cdots, r\}$. 

Let $\mathbf{a}:=(a_1,\cdots, a_d) \in \mathbb{I}^d$. 
The group $W^B_d$ acts on the set $\mathbb{I}^d$ as follows~\cite{BW13, ES}:
\begin{equation}\label{WeylgroupactionBC}
\begin{split}
&s_i: (\cdots, a_i,a_{i+1},\cdots) \mapsto (\cdots, a_{i+1},a_{i},\cdots)   \textnormal{ for } i > 0, \\
&s_0:(a_1,\cdots) \mapsto (-a_1, \cdots).
\end{split}
\end{equation}

Let $V_n$ be a vector space with basis $\{ v_i, i \in \mathbb{I}_n \}$. Write $v_\mathbf{a}:= v_{a_1} \otimes \cdots \otimes v_{a_d} \in V_n\td$. Then the set $\{v_{\mathbf{a}}, \mathbf{a}\in \mathbb{I}^d\}$ is a basis for $V_n^{\otimes d}$. 

There is a right action of $\mathcal{H}_{Q,q}^{B}(d)$ on $V_n^{\otimes d}$ given by 
\begin{equation}\label{HeckeactionBC}
\begin{split}
&T_i \mapsto (R_q)_{i,i+1} \textnormal{ for } i > 0, \\
&T_0 \mapsto (K_Q)_1,
\end{split}
\end{equation}
where $R_q:V_n \otimes V_n \to V_n \otimes V_n$ is the map 
\begin{equation}\label{eq:Rmatrixdef} 
R_q: v_i \otimes v_j \mapsto \begin{cases} 
      q^{-1} v_i \otimes v_j & \textnormal{ if } i=j, \\
      v_j \otimes v_i & \textnormal{ if } i<j, \\
      v_j \otimes v_i + (q^{-1}-q)v_i \otimes v_j & \textnormal{ if } i>j,
   \end{cases}
\end{equation}
and $K_Q : V_n \to V_n$ is the map
\begin{equation}\label{eq:Kmatrixdef}
K_Q: v_i \mapsto \begin{cases} 
      Q^{-1} v_i  & \textnormal{ if } i=0, \\
      v_{-i}  & \textnormal{ if } i>0 , \\
      v_{-i} + (Q^{-1}-Q)v_i  & \textnormal{ if } i<0. 
   \end{cases}
\end{equation}
The map $(R_q)_{i,i+1}$ acts as $R_q$ on the $(i,i+1)$ entries of the tensor product $V_n^{\otimes d}$ and as the identity on the rest of the entries. Similarly, $(K_Q)_1=K_Q\otimes \id_{V_n}^{\otimes d-1}$.
The action of $\mathcal{H}_{Q,q}^B(d)$ is classical. See for example Green \cite{Greenhyper}. The Schur algebra $S_{Q,q}^B(n;d)$ is then defined as the centralizer algebra of the right action of $\mathcal{H}_{Q,q}^B(d)$ on the tensor space $V_n\td$. 

\begin{rem}
The map $R_q$ is the action of the inverse of the universal $R$-matrix of $\Uq$ on $V_n \otimes V_n$ as explained in \cite[Proposition 5.1]{BW13} in the $Q=q$ case. Similarly, the map $K_q$ is the action of the inverse of the universal $K$-matrix (due to \cite{BK}) of the coideal $U_{Q,q}^B$ on $V_n$ (see \cite[Theorem 5.4, Theorem 6.27]{BW13}, again for the $Q=q$ case). 
\end{rem}

\subsubsection{The elements $K_i$}

For each $1\leq i\leq d$, we consider the elements \[K_i=T_{i-1}\cdots T_1T_0T_1\cdots T_{i-1}\]
in $\HB$. These are the Jucy-Murphy elements of $\Hb$ (see~\cite[Section 2]{DipperJamesMathas}). The following lemma is well-known, see for example~\cite[Proposition 2.1]{DipperJamesMathas} for a proof. 

\begin{lem}\label{lem:kicommute}
For each $1\leq i,j\leq d$, $K_i$ and $K_j$ commute.
\end{lem}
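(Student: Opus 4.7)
The plan is to prove $K_iK_j=K_jK_i$ by first reducing to the adjacent case $j=i+1$, and then handling the adjacent case by induction on $i$. First, observe that $K_i$ is a word in $T_0,T_1,\ldots,T_{i-1}$ only, so by the far-commutativity relations $T_kT_l=T_lT_k$ for $|k-l|>1$ in \eqref{HeckedefinitionBC}, the generator $T_k$ commutes with $K_i$ whenever $k\geq i+1$. Unwinding the recursion $K_j=T_{j-1}K_{j-1}T_{j-1}$ gives, for $j>i$, the identity $K_j=T_{j-1}\cdots T_iK_iT_i\cdots T_{j-1}$. Combining this with the observation above, the generic case $K_iK_j=K_jK_i$ follows from the adjacent case by pulling $K_i$ past the outer factors $T_{j-1}\cdots T_{i+1}$ on both sides.

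It remains to prove $K_iK_{i+1}=K_{i+1}K_i$ for all $i\geq 1$. The base case $i=1$ reads
\[
K_1K_2 \;=\; T_0T_1T_0T_1 \;=\; T_1T_0T_1T_0 \;=\; K_2K_1,
\]
which is exactly the type-B braid relation in \eqref{HeckedefinitionBC}. For the inductive step, expand $K_{i+1}=T_iK_iT_i$ and $K_i=T_{i-1}K_{i-1}T_{i-1}$ in both $K_iK_{i+1}$ and $K_{i+1}K_i$. Using repeatedly (i) the braid relation $T_{i-1}T_iT_{i-1}=T_iT_{i-1}T_i$ and (ii) the fact that $T_i$ commutes with $K_{i-1}$ (since $K_{i-1}$ involves only $T_0,\ldots,T_{i-2}$), one rewrites
\[
K_iK_{i+1}=T_{i-1}T_i\,(K_{i-1}T_{i-1}K_{i-1})\,T_iT_{i-1}T_i,\qquad K_{i+1}K_i=T_{i-1}T_iT_{i-1}\,(K_{i-1}T_{i-1}K_{i-1})\,T_iT_{i-1}.
\]
After applying the braid relation $T_iT_{i-1}T_i=T_{i-1}T_iT_{i-1}$ on the right of the first expression and canceling the common prefix $T_{i-1}T_i$ and suffix $T_iT_{i-1}$ from both sides, the desired identity collapses to
\[
K_{i-1}T_{i-1}K_{i-1}T_{i-1}\;=\;T_{i-1}K_{i-1}T_{i-1}K_{i-1},
\]
which is precisely the induction hypothesis $K_{i-1}K_i=K_iK_{i-1}$.

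I expect the main obstacle to be the bookkeeping in the inductive step: carefully commuting $T_i$ through the word $K_{i-1}$, applying the braid relation in the right place, and arranging the result so that the induction hypothesis appears after cancellation. No deep input beyond the defining relations of $\HB$ is needed, and in particular the proof does not rely on the faithfulness of any representation.
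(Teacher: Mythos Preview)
Your proof is correct. The paper does not spell out an argument for this lemma but refers to \cite[Proposition~2.1]{DipperJamesMathas}; your direct approach---reducing to the adjacent case via the observation that $K_i$ is a word in $T_0,\ldots,T_{i-1}$, and then proving $K_iT_iK_iT_i=T_iK_iT_iK_i$ by induction---is the standard one and matches the argument found there (and, incidentally, the draft computation the authors left in the source after \verb|\end{document}|). One small remark: the ``canceling'' of the common prefix $T_{i-1}T_i$ and suffix $T_iT_{i-1}$ in your inductive step tacitly uses that each $T_i$ is a unit in $\HB$, which follows immediately from the quadratic relations in \eqref{HeckedefinitionBC}.
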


Let $c_K \in \HB$ be the element 
\begin{equation}\label{eqxK}
 c_K := \prod_{i=1}^d K_i. 
\end{equation}
The product is well-defined due to Lemma \ref{lem:kicommute}.

\begin{lem}\label{cKcentral}
The element $c_K\in \HB$ is central.
\end{lem}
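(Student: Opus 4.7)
The strategy is to identify $c_K$ with the Hecke algebra basis element $T_{w_0}$ associated to the longest element $w_0$ of $W^B(d)$, and then invoke the standard fact that $T_{w_0}$ is central in $\HB$ whenever $w_0$ is central in $W^B(d)$.

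\emph{Step 1 (the element $c_K$ is $T_{w_0}$).} Let $k_i = s_{i-1}s_{i-2}\cdots s_1 s_0 s_1 \cdots s_{i-1} \in W^B(d)$. In the standard geometric action of $W^B(d)$ on $\R^d$ (with $s_0\colon e_1\mapsto -e_1$ and $s_j$ swapping $e_j,e_{j+1}$ for $j\geq 1$), a direct computation shows that $k_i$ is the reflection $e_i\mapsto -e_i$ fixing all other basis vectors. In particular $\ell(k_i)=2i-1$ and the displayed word for $k_i$ is reduced. The product $k_1 k_2 \cdots k_d$ then acts as $-\id$ on $\R^d$, which coincides with the longest element $w_0\in W^B(d)$; this element has length $d^2 = \sum_{i=1}^d(2i-1)$. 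Hence the concatenated word $(s_0)(s_1 s_0 s_1)\cdots (s_{d-1}\cdots s_0\cdots s_{d-1})$ is a reduced expression for $w_0$, so by the definition of $T_w$ we obtain
\[ c_K \;=\; K_1 K_2 \cdots K_d \;=\; T_{w_0}. \]

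\emph{Step 2 (centrality of $T_{w_0}$).} The element $w_0=-\id$ commutes with every signed permutation, hence is central in $W^B(d)$. To see that $T_{w_0}$ is central in $\HB$ it suffices to check $T_{w_0}T_j=T_jT_{w_0}$ for each generator $T_j$. Fix $j$, set $s=s_j$, and note that since $w_0$ is longest, $\ell(sw_0)=\ell(w_0 s)=d^2-1$. Thus we may write $w_0=us=su'$ with $\ell(u)=\ell(u')=d^2-1$ and $T_{w_0}=T_u T_j=T_j T_{u'}$. Applying the quadratic relation $T_j^2 = 1+(\alpha^{-1}-\alpha)T_j$ (with $\alpha=Q$ if $j=0$ and $\alpha=q$ otherwise), we compute
\[ T_{w_0}T_j = T_u T_j^2 = T_u + (\alpha^{-1}-\alpha)T_{w_0}, \qquad T_j T_{w_0} = T_j^2 T_{u'} = T_{u'} + (\alpha^{-1}-\alpha)T_{w_0}. \]
Centrality of $w_0$ in $W^B(d)$ gives $u=w_0 s=sw_0=u'$, hence $T_u=T_{u'}$ and the two sides agree.

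The main technical point is the identification $c_K=T_{w_0}$ in Step~1, which rests on recognising the given product of the $k_i$ as a reduced expression for $w_0=-\id$; once this is established, Step~2 is a routine length-based manipulation that works uniformly for any Coxeter-type Hecke algebra whose longest element is central.
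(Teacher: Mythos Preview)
Your proof is correct and takes a genuinely different route from the paper's. The paper verifies directly that $c_K$ commutes with each generator $T_i$: for $T_0$ it uses the relation $T_0T_1T_0T_1=T_1T_0T_1T_0$ together with $T_0T_j=T_jT_0$ for $j>1$; for $T_i$ with $i>0$ it cites the Jucys--Murphy commutation facts from Dipper--James--Mathas, namely that $T_i$ commutes with $K_j$ for $j\neq i,i+1$ and with the product $K_{i+1}K_i$. Your argument instead identifies $c_K$ with $T_{w_0}$ via a reduced-expression count and then invokes the standard centrality of $T_{w_0}$ whenever $w_0$ is central in the Coxeter group. Your approach is more conceptual and applies uniformly to any (possibly unequal-parameter) Iwahori--Hecke algebra whose longest element is central; it also explains \emph{why} $c_K$ is special without appealing to the finer Jucys--Murphy structure. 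The paper's approach, by contrast, keeps the Jucys--Murphy machinery in the foreground, which is natural given that the $K_i$ are used repeatedly elsewhere (e.g., in Proposition~\ref{eigenvaluesKi} and Proposition~\ref{Kidiag}).
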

\begin{proof}
We show that $c_K$ commutes with all the generators $T_i$ of $\HB$. 

First let us look at $T_0$. It obviously commutes with itself. It commutes with $T_1T_0T_1$, this is just the equation $T_1T_0T_1T_0 = T_0 T_1 T_0T_1$. It also commutes with $T_i, i>1$. This means it commutes with $T_j\cdots T_2 (T_1 T_0 T_1) T_2 \cdots T_j$. Therefore it commutes with $c_K$. 

Let us look at $T_i$ for $i>0$. The following facts are parts ii) and iii) of~\cite[Proposition 2.1]{DipperJamesMathas}:
\begin{enumerate}
\item $T_i$ commutes with $K_{j}$ for $i \neq j, j-1$. 
\item $T_i$ commutes with $K_{i+1} K_i$. 
\end{enumerate}
We conclude that $T_i$ commutes with $c_K$. 
\end{proof}


Consider the action of $\HB$ on $V_n\td$ defined in \S\ref{sssec:actionontensor}. We close the section by determining the eigenvalues of $K_i$. The following lemma \cite[Lemma 5.2]{MaksimauStroppel} comes useful.

\begin{lem}\label{newlemma}
Suppose $K_i$, $K_{i+1}$ has a simultaneous eigenvector with eigenvalues $a,b$ (respectively). Then either $K_i,K_{i+1}$ also has a simultaneous eigenvector with eigenvalues $b,a$ or $b=q^{\pm2}a$.
\end{lem}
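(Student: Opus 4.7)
The plan is to exploit the identity $K_{i+1} = T_i K_i T_i$, which follows immediately from the definition $K_{i+1} = T_i (T_{i-1}\cdots T_1 T_0 T_1\cdots T_{i-1}) T_i = T_i K_i T_i$. Given a simultaneous eigenvector $v$ with $K_i v = av$ and $K_{i+1} v = bv$, I would look at the vector $w := T_i v$ and compute the action of both $K_i$ and $K_{i+1}$ on the (at most two-dimensional) subspace spanned by $v$ and $w$.

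Using $K_i T_i = T_i^{-1} K_{i+1}$ (obtained from the above identity) together with the quadratic relation $T_i^2 = 1 + (q^{-1}-q)T_i$ and its consequence $T_i^{-1} = T_i + (q - q^{-1})$, a direct calculation yields
$$K_i w \;=\; bw + b(q - q^{-1})\, v, \qquad K_{i+1} w \;=\; aw + b(q^{-1} - q)\, v.$$
Thus $K_iK_{i+1}$ acts on both $v$ and $w$ as multiplication by $ab$ (consistent with Lemma \ref{lem:kicommute}), while $K_i$ and $K_{i+1}$ have upper-triangular matrices with diagonals $(a,b)$ and $(b,a)$ respectively in the basis $\{v,w\}$.

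The argument then splits into two cases. If $v$ and $w$ are linearly independent and $a\neq b$, a routine $2\times 2$ linear-algebra computation produces the explicit vector
$$u \;:=\; \frac{b(q^{-1}-q)}{a-b}\, v \;+\; w$$
and verifies simultaneously that $K_i u = b u$ and $K_{i+1} u = a u$, giving the desired eigenvector with swapped eigenvalues; the case $a=b$ is trivially handled by $v$ itself. If instead $w$ is a scalar multiple of $v$, then $v$ is an eigenvector of $T_i$ with some eigenvalue $\lambda$; the quadratic relation $(T_i+q)(T_i-q^{-1})=0$ forces $\lambda\in\{-q,q^{-1}\}$, and then the identity $K_{i+1}v = T_iK_iT_iv = a\lambda^2 v$ gives $b = a\lambda^2 = q^{\pm 2}a$, which is the second alternative.

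The main technical subtlety is organizing the case split correctly, in particular identifying the linear-dependence case as precisely the source of the $b = q^{\pm 2}a$ alternative. Beyond that, the proof is a calculation confined to a two-dimensional subspace; it uses only the definition of $K_i$ inside $\HB$ and the quadratic relation on $T_i$, with no input from the Schur--Weyl duality or from the specific action on $V_n^{\otimes d}$.
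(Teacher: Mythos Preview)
Your proof is correct and takes essentially the same approach as the paper. Up to the scalar factor $(a-b)$, your vector $u$ is exactly the paper's candidate $w=(q^{-1}-q)bv+(a-b)T_iv$; the paper's case split ``$w\neq 0$'' versus ``$w=0$'' corresponds to your split between the two-dimensional case and the case where $v$ is a $T_i$-eigenvector, and the $a=b$ subcase is absorbed automatically in the paper's formulation.
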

\begin{proof}
Let $v\in V_n\td$ be a simultaneous eigenvector for $K_i$, $K_{i+1}$ with eigenvalues $a,b$ (respectively). Then the vector $w=(q\inv-q)bv+(a-b) T_iv$ is checked to satisfy $K_iw=bw$ and $K_{i+1}=aw$. 
If $w\neq 0$, then $w$ is a desired eigenvector.
If $w=0$ then $v$ is an eigenvector for $T_i$. This implies $K_{i+1}v=T_iK_iT_iv=ac^2v$ where $c$ is an eigenvalue for $T_i$, which is either of $-q$ or $q\inv$.
\end{proof}

\begin{prop}\label{eigenvaluesKi}
The eigenvalues of $K_i$ on $V_n\td$ are of the form $-Qq^{2j}$ and $Q\inv q^{2j}$ where $|j|<i$. 
\end{prop}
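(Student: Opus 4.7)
The plan is to proceed by induction on $i$. For the base case $i=1$, the element $K_1=T_0$ acts on $V_n\td$ as $K_Q\otimes \id^{\otimes(d-1)}$, so it suffices to compute the eigenvalues of $K_Q$ on $V_n$. By equation \eqref{eq:Kmatrixdef}, the vector $v_0$ (when $n$ is odd) is a $Q\inv$-eigenvector, and on each two-dimensional invariant subspace $\mathrm{span}(v_j,v_{-j})$ for $j>0$, the matrix of $K_Q$ has characteristic polynomial $\lambda^2-(Q\inv-Q)\lambda-1$ with roots $Q\inv$ and $-Q$. Thus the eigenvalues of $K_1$ are $-Q=-Qq^0$ and $Q\inv=Q\inv q^0$, which matches the claim with $j=0$ and $|j|<1$.

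For the inductive step, assume the conclusion holds for $K_i$, and let $b$ be an eigenvalue of $K_{i+1}$ on $V_n\td$ (we may work over the algebraic closure $\bar k$ if necessary). Since $K_i$ and $K_{i+1}$ commute by Lemma \ref{lem:kicommute}, the $b$-eigenspace $E_b$ of $K_{i+1}$ is preserved by $K_i$. The operator $K_i|_{E_b}$ has some eigenvector $v\in E_b$ with eigenvalue $a$; this $v$ is then a simultaneous eigenvector of $(K_i,K_{i+1})$ with eigenvalues $(a,b)$, and by the inductive hypothesis $a\in \{-Qq^{2j},\, Q\inv q^{2j}:|j|<i\}$.

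Now apply Lemma \ref{newlemma} to $v$. In the first alternative of that lemma, there is a simultaneous eigenvector with eigenvalues $(b,a)$, so $b$ is itself an eigenvalue of $K_i$ and therefore lies in $\{-Qq^{2j},\, Q\inv q^{2j}:|j|<i\}\subset \{-Qq^{2j},\, Q\inv q^{2j}:|j|<i+1\}$. In the second alternative, $b=q^{\pm 2}a$; then $a=-Qq^{2j}$ gives $b=-Qq^{2(j\pm 1)}$ and $a=Q\inv q^{2j}$ gives $b=Q\inv q^{2(j\pm 1)}$, where in both cases $|j\pm 1|\leq |j|+1\leq i<i+1$. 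Hence $b$ has the required form in either case, completing the induction.

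The content of the argument is that the signs $\pm$ and the power of $Q$ are rigidly preserved along the chain $K_1,K_2,\dots$, while each step allows at most a multiplicative shift by $q^{\pm 2}$ before the role of $K_i$ and $K_{i+1}$ can be swapped; this is exactly what Lemma \ref{newlemma} guarantees. The main subtlety is ensuring the existence of a genuine (rather than merely generalized) simultaneous eigenvector inside $E_b$, which is handled by restricting $K_i$ to the nonzero invariant subspace $E_b$ and passing to $\bar k$ to find an eigenvalue there; the conclusion for the original eigenvalues over $k$ follows since the listed set of possible eigenvalues is Galois-stable.
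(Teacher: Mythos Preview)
Your proof is correct and follows essentially the same approach as the paper: induction on $i$, base case from the explicit form of $K_Q$, and the inductive step via a simultaneous eigenvector together with Lemma~\ref{newlemma}. The only differences are cosmetic: you justify the existence of the simultaneous eigenvector by restricting $K_i$ to the $K_{i+1}$-eigenspace (using Lemma~\ref{lem:kicommute}), whereas the paper invokes simultaneous triangularizability, and you add the remark about passing to $\bar k$, which the paper omits.
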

\begin{proof}
The $i=1$ case follows from the definition (and also follows from the the relation $(T_0-Q\inv)(T_0+Q)=0$ in the Hecke algebra). 

Now suppose that the eigenvalues of $K_i$ are of the form $-Qq^{2j}$ and $Q\inv q^{2j}$ where $|j|<i$, and let $b$ be an eigenvalue of $K_{i+1}$. The actions of $K_i$ and $K_{i+1}$ are simultaneously triangularizable, so we can find a simultaneous eigenvector $v$ for $K_i,K_{i+1}$ where $K_{i+1}v=bv$. Then by Lemma~\ref{newlemma}, either $b=q^{\pm 2}a$ where $a$ is an eigenvalue of $K_i$ (the second case of the lemma) or  $b$ is an eigenvalue of $K_i$  (the first case of the lemma). Therefore $b$ should be of the desired form.
\end{proof}

\begin{cor}\label{evKmatrix}
The eigenvalues of $c_K$ are of the form $\pm Q^iq^j$ for $i,j\in\mathbb Z$.
\end{cor}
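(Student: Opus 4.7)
The plan is to deduce the corollary by combining the commutativity of the $K_i$ from Lemma~\ref{lem:kicommute} with the eigenvalue description of each individual $K_i$ from Proposition~\ref{eigenvaluesKi}.

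First, since $K_1,\dots, K_d$ pairwise commute as operators on $V_n^{\otimes d}$, they can be simultaneously triangularized. Hence any eigenvalue of $c_K=\prod_{i=1}^d K_i$ arises as a product $\prod_{i=1}^d \mu_i$, where $(\mu_1,\dots,\mu_d)$ is a tuple of simultaneous generalized eigenvalues of $(K_1,\dots,K_d)$. Equivalently, picking a basis in which the $K_i$ are all upper triangular, the diagonal of $c_K$ is entrywise the product of the diagonals of the $K_i$, and this exhausts the eigenvalues of $c_K$.

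Next, Proposition~\ref{eigenvaluesKi} says each $\mu_i$ lies in the set $\{-Qq^{2j},\ Q^{-1}q^{2j}\}$ for some integer $j$ with $|j|<i$. In particular each $\mu_i$ has the shape $\varepsilon_i Q^{\pm 1} q^{2j_i}$ with $\varepsilon_i\in\{\pm 1\}$ and $j_i\in\mathbb Z$. Multiplying these factors together yields an eigenvalue of the form $\pm Q^a q^b$ for some integers $a,b$ (with $|a|\le d$ and $b$ even), which proves the corollary.

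The only substantive step is the simultaneous triangularization, which is immediate from Lemma~\ref{lem:kicommute}; everything else is a direct shape-tracking argument using Proposition~\ref{eigenvaluesKi}. I don't anticipate any real obstacle.
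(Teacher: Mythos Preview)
Your proposal is correct and follows essentially the same route as the paper: simultaneous triangularization of the commuting $K_i$ (Lemma~\ref{lem:kicommute}) reduces each eigenvalue of $c_K$ to a product of eigenvalues of the $K_i$, and then Proposition~\ref{eigenvaluesKi} gives the desired form. The paper's proof is just a two-line version of exactly this argument.
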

\begin{proof}
Since $K_i$ are simultaneously triangularizable, each eigenvalue of $c_K$ is a product of eigenvalues of $K_i$'s. The claim thus follows from Proposition~\ref{eigenvaluesKi}
\end{proof}

\subsection{Coideal subalgebras and Schur algebras}\label{coidealschurdef}

\subsubsection{Schur algebras}\label{subsec:schur}
Considering the action  of $\mathcal{H}_{Q,q}^B(d)$ on $V_n\td$ in equation~\eqref{HeckeactionBC}, define
\begin{equation}\label{def:schuralgebraB}
S^B_{Q,q}(m,n;d):=  \Hom_{\mathcal{H}_{Q,q}^B} (V_m^{\otimes d}, V_n^{\otimes d}).  
\end{equation}
Then the Schur algebra $S^B_{Q,q}(n;d)$ is the specialization of $S^B_{Q,q}(m,n;d)$ at $m=n$; it is an algebra with multiplication given by composition and the identity given by the identity homomorphism.   

There is an obvious action $S_{Q,q}^B(n;d) \rotatebox[origin=c]{-90}{$\circlearrowright$} V_n\td$. 

\subsubsection{Quantum groups and coideal subalgebras}\label{sec:defcoideal}
In this subsection, we assume that $k=\mathbb C$ and $Q,q$ are generic.\footnote{The reason we need this assumption is that the coideal subalgebra $U^B_{Q,q}$ is defined and studied only when $q,Q$ are generic (to the authors' knowledge at the point when this work is written). 
When $q$ or $Q$ is a root of unity, we expect there to be a definition of the coideal $U_{Q,q}^B$ similar to Lusztig's quantum group at a root of unity \cite{Lusztigrou}, which still surjects to the Schur algebra $S_{Q,q}^B$.
}

The quantum group $U_q(\mathfrak{gl}_n)$ is the unital associative algebra over $\mathbb{C}$ generated by elements $E_i, F_i$ for $i \in \mathbb{I}_{n-1}$ and $D_i^{\pm}$ for $i \in \mathbb{I}_n$ subject to the relations (set $j' = j-\frac{1}{2}$):
\begin{equation}\label{qgrelations}
\begin{split}
&     D_i D_j = D_j D_i, \quad D_i D_i^{-1}=1=D_i^{-1} D_i, \\
&      D_i E_j D_i^{-1} = q^{\delta_{i,j'} - \delta_{i-1, j'} } E_j, \quad D_i F_j D_i^{-1}= q^{-\delta_{i,j'} + \delta_{i-1, j'} }  F_j, \\[0.5em]
&    E_i E_j = E_j E_i, \quad F_i F_j = F_j F_i  \quad \text{ if } i \neq j \pm 1, \\
&    E_i F_j - F_j E_i = \delta_{i,j}\frac{D_{j'} D_{j'+1}^{-1} - D_{j'+1}D\inv_{j'}}{q-q^{-1}}, \\
&    E_i^{2} E_{i \pm 1} - (q+q^{-1})E_i E_{i\pm 1} E_i  + E_{i\pm 1}E^2_i  = 0, \\
&    F_i^{2} F_{i \pm 1} - (q+q^{-1})F_i F_{i\pm 1} F_i  + F_{i\pm 1}F^2_i = 0.
\end{split}
\end{equation}
Let $H_j = D_{j'} D_{j'+1}^{-1}$. The subalgebra of $U_q(\mathfrak{gl}_n)$ generated by $E_i, F_i, H_i $ for $i \in \mathbb{I}_{n-1}$ is the quantum group $U_q(\mathfrak{sl}_n)$. We do not define the quantum group at a root of unity, but whenever we mention it, we are referring to Lusztig's version of the quantum group at a root of unity \cite{Lusztigrou}.

The quantum group $U_q(\mathfrak{gl}_n)$ is a Hopf algebra with comultiplication $\Delta$ and antipode $S$ given on generators by the following formulas:
\begin{equation}\label{qgcomultiplication}
    \begin{split}
        \Delta(D_i) &= D_i \otimes D_i, \\
        \Delta(E_i) &= 1 \otimes E_i + E_i \otimes H_i\inv, \\
        \Delta(F_i) &= F_i \otimes 1 + H_i \otimes F_i, \\
        S(D_i) &= D_i^{-1}, \,\,\, S(E_i)=-E_i H_i,\,\,\, S(F_i)= -H_i\inv F_i.
    \end{split}
\end{equation}
Let $V_n$ be the defining representation of $\Uq$ described in \S2.1.2; it has basis $\{v_i, i \in \mathbb{I}_n\}$ and the quantum group $U_q(\mathfrak{gl}_n)$ acts on $V_n$ as follows:
\begin{equation}\label{qgaction}
\begin{split}
D_i v_j &= q^{\delta_{i,j}} v_j, \\
E_i v_j &= \delta_{i, j'} v_{j-1} , \\
F_i v_j &= \delta_{i,j'+1} v_{j+1}.
\end{split}
\end{equation}

We now introduce the (right) coideal subalgebra $U^B_{Q,q} (\mathfrak{gl}_n)$ as in \cite{BWW}, where it is denoted by $\mathbf{U}^i$ or $\mathbf{U}^j$, depending on the parity of $n$. For $i \in \mathbb{I}_{n-1}, j \in \mathbb{I}_n$ define the following elements of $\Uq$:

\begin{equation}\label{eq:coidealgen}
\begin{split}
 d_i = D_i D_{-i}, \,\,\, e_i &= E_i +F_{-i}H_{i}^{-1}, \,\,\, f_{i} = E_{-i} + H_{-i}^{-1}F_{i}, \\ 
e_{\frac{1}{2}} = E_{\frac{1}{2}} + Q^{-1}F_{-\frac{1}{2}} H_{\frac{1}{2}}^{-1}, \,\,\, f_{\frac{1}{2}} &= E_{-\frac{1}{2}} + Q H^{-1}_{-\frac{1}{2}} F_{\frac{1}{2}}, \,\,\, t = E_0 + q F_0 H_0^{-1} + \frac{Q-Q^{-1}}{q-q^{-1}}H_0^{-1}. 
\end{split}
\end{equation}

The subalgebra $U^B_{Q,q} (\mathfrak{gl}_n)$ of $\Uq$ is generated by the elements $e_i, f_i$ for $i \in \mathbb{I}_{n-1}, i>0$ , $d_i$ for $i\in \mathbb I_{n}, i>0$, and the element $t$ when $n$ is odd. We denote $U^B_{Q,q} (\mathfrak{gl}_n)$ by $ U_{Q,q}^B$ throughout the text. The name coideal subalgebra is due to the fact that the restriction of the comultiplication from $\Uq$ to $U_{Q,q}^B$ has image in $U_{Q,q}^B \otimes \Uq.$
The $\Uq$-module $V_n\td$ restricts to an $U_{Q,q}^B$-module. Then the left action of $U_{Q,q}^B$ and the right action of $\Hb$ on $V_n\td$ commute. 
Moreover, we have

\begin{thm}\label{thm:doublecentralizerB}
 \cite[Theorem 2.6, Theorem 4.4]{BWW} The actions of $U_{Q,q}^B$ and $\Hb$ on $V_n\td$ form double centralizers. 
\end{thm}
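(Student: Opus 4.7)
The plan is to establish (i) that the two actions commute, (ii) that $\mathcal{H}_{Q,q}^B(d)$ surjects onto $\End_{U_{Q,q}^B}(V_n^{\otimes d})$, and (iii) that $U_{Q,q}^B$ surjects onto $\End_{\Hb}(V_n^{\otimes d})$. Given (i) and (ii), part (iii) follows from the bicommutant theorem for semisimple algebras, which is applicable here because the generic assumption on $Q,q$ made in \S\ref{sec:defcoideal} forces both $\Hb$ and $U_{Q,q}^B$ to act semisimply on $V_n^{\otimes d}$.

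For (i), I would verify commutativity on generators. The generators $T_i$ with $i>0$ act as $R_q$ on factors $(i,i+1)$, and commutativity with the coideal action follows because $R_q$ intertwines the full $\Uq$-action on $V_n\otimes V_n$, which restricts to any $U_{Q,q}^B\subset\Uq$. For $T_0$ acting as $K_Q$ on the first factor, I would check directly on the generators $e_i,f_i,d_i$ (and $t$ when $n$ is odd) from \eqref{eq:coidealgen} that $K_Q$ intertwines their $V_n$-action; this is the defining intertwining property of the coideal $K$-matrix noted in the remark after \eqref{eq:Kmatrixdef}. Combined with the coideal condition $\Delta(U_{Q,q}^B)\subset U_{Q,q}^B\otimes\Uq$, this propagates to commutativity of $K_Q\otimes\id^{\otimes d-1}$ with the coideal action on the whole tensor power (one slides $K_Q$ through using $R_q$-like identities and reduces to the single-factor case).

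For (ii), I would reduce to the classical limit. Jimbo's type A duality gives $\mathcal{H}_q^A(d)\twoheadrightarrow\End_{\Uq}(V_n^{\otimes d})$, so any element of $\End_{U_{Q,q}^B}(V_n^{\otimes d})$ outside this image must reflect the reduction of symmetry from $\Uq$ down to $U_{Q,q}^B$, and the image of $T_0$, namely $K_Q$, is the paradigmatic such operator. After choosing an integral form and specializing $Q,q\to 1$, the coideal degenerates to the enveloping algebra of a Levi subalgebra $\mathfrak{gl}_p\oplus\mathfrak{gl}_{n-p}$ (with $p$ set by the parity of $n$) and $\Hb$ degenerates to $\mathbb{C}[W^B(d)]$. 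The classical Schur--Weyl duality $\End_{U(\mathfrak{gl}_p\oplus\mathfrak{gl}_{n-p})}(V_n^{\otimes d})=\mathbb{C}[W^B(d)]$ can be proven by decomposing $V_n=V_p\oplus V_{n-p}$, applying type A Schur--Weyl inside each summand, and realizing the extra Weyl-group generator $s_0$ as the involutive swap exchanging $V_p$ and $V_{n-p}$. Semicontinuity of dimensions then transports the equality back to generic $Q,q$, yielding (ii); the bicommutant theorem supplies (iii).

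The main obstacle will be controlling dimensions under the specialization: one must produce bimodule-compatible integral forms of both algebras so that the maps $\Hb\to\End(V_n^{\otimes d})$ and $U_{Q,q}^B\to\End(V_n^{\otimes d})$ are flat in $(Q,q)$, and verify that both the source and target of these maps behave uniformly across the specialization to $Q=q=1$. This requires canonical-basis techniques for $U_{Q,q}^B$, a cellular structure on $\Hb$, and a careful parametrization of simple $U_{Q,q}^B$-components of $V_n^{\otimes d}$; this bookkeeping is the technical core of the BWW proof.
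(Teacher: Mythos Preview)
The paper does not prove this theorem; it is quoted directly from \cite{BWW} (their Theorem~2.6 and Theorem~4.4) and used as a black box. So there is no proof in the paper to compare against.

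Your outline is a reasonable sketch of how one proves such a duality and is in the spirit of the argument in \cite{BWW}, but a couple of points deserve care. First, the description of the classical limit is slightly off: the generator $s_0$ at $Q=q=1$ acts by $v_i\mapsto v_{-i}$, which is an involution of $V_n$ but not literally the swap of two Levi blocks; after a change of basis it becomes the sign operator $+1$ on one block and $-1$ on the other, and this is what commutes with the block-diagonal $\mathfrak{gl}_p\oplus\mathfrak{gl}_{n-p}$. The classical duality you need is then the (well-known) statement that the centralizer of $GL_p\times GL_{n-p}$ on $(\mathbb C^n)^{\otimes d}$ is generated by $S_d$ together with this sign operator on one factor, which is indeed the image of $\mathbb C[W^B(d)]$. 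Second, and as you already flag, the deformation step requires flat integral forms for both sides so that the surjectivity at $Q=q=1$ lifts to generic $Q,q$; this is where \cite{BWW} invokes the canonical basis machinery for the coideal. With these refinements your strategy is sound.
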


\begin{rem}\label{coidealschur}
By Theorem \ref{thm:doublecentralizerB} one realizes the Schur algebra $S_{Q,q}^B(n;d)$ as a quotient of the coideal subalgebra $U_{Q,q}^B$. 
This gives an equivalence of categories between the category of degree $d$ modules of $U_{Q,q}^B$ (i.e. summands of $V_n\td$) and the category of $S_{Q,q}(n;d)$-modules. 
Our main results in Section \ref{sec:quantumpolfunctors} identifies degree $d$ polynomial functors with representations of the Schur algebra $S_{Q,q}^B(n;d)$ for $n \geq d$. The fact that the category of finite dimensional representations of $S_{Q,q}^B(n;d)$ is equivalent to the same category as long as $n\geq d$
can be interpreted as a stability result in the limit $n\to\infty$ for $U_{Q,q}^B$ when $Q$ and $q$ are generic. This is different to the $d\to\infty$ stabilization studied in \cite{BKLW}.
\end{rem}

For a partition $\lambda$, let $|\lambda|$ be the sum of its parts and $\ell(\lambda)$ the number of non-zero entries in $\lambda$. Under our assumption,
the algebra $\mathcal{H}_{Q,q}^{B}(d)$ is semisimple and has irreducible representations $M_{\lambda, \mu}$ indexed by pairs of partitions $(\lambda, \mu)$ with $|\lambda| + |\mu| =d$ (this follows from the work of \cite{DJ92}). Furthermore, there is a $\SB\otimes \HB$-bimodule decomposition of $V_n\td$ (note that using
Theorem~\ref{thm:doublecentralizerB} we can view it as a decomposition as a $U_{Q,q}^B \otimes \Hb$-bimodule):

\begin{equation}\label{eq:SWdecompositionB}
V_n^{\otimes d}  \cong \bigoplus_{(\lambda,\mu)\vdash_n d} L_{\lambda,\mu}(n) \otimes M_{\lambda, \mu}.  
\end{equation}
The subscript $(\lambda,\mu) \vdash_n d$ means that $\lambda, \mu$ are partitions such that $|\lambda|+|\mu|=d$ and $\ell(\lambda)\leq r, \ell(\mu) \leq r$ when $n=2r$ or $\ell(\lambda)\leq r+1, \ell(\mu) \leq r$ when $n=2r+1$. In the above, $L_{\lambda, \mu}(n)$ is either an irreducible representation of $U_{Q,q}^B$ or $0$. If $n\geq 2d$, $L_{\lambda, \mu}(n)$ is never $0$. These irreducibles are indexed by bipartitions $(\lambda, \mu) \vdash_n d$.

A useful consequence of \eqref{eq:SWdecompositionB} is the following fact.

\begin{prop}\label{Kidiag}
The $K_i$ action on $V\td$ is diagonalizable.
\end{prop}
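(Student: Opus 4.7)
The plan is to reduce to diagonalizability of $K_i$ on each irreducible Hecke module via the bimodule decomposition \eqref{eq:SWdecompositionB}. Since $K_i$ lies in $\mathcal{H}^B_{Q,q}(d)$, that decomposition $V_n^{\otimes d} \cong \bigoplus_{(\lambda,\mu)} L_{\lambda,\mu}(n) \otimes M_{\lambda,\mu}$ is, in particular, a decomposition of $\mathcal{H}^B_{Q,q}(d)$-modules in which each irreducible $M_{\lambda,\mu}$ occurs with multiplicity $\dim L_{\lambda,\mu}(n)$. Hence $K_i$ acts on each summand $L_{\lambda,\mu}(n)\otimes M_{\lambda,\mu}$ as $\id \otimes K_i|_{M_{\lambda,\mu}}$, and diagonalizability of $K_i$ on $V_n^{\otimes d}$ is equivalent to its diagonalizability on every irreducible $M_{\lambda,\mu}$.

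To handle the latter, I would invoke the semisimplicity of $\mathcal{H}^B_{Q,q}(d)$ in the generic case (\cite{DJ92}) together with Hoefsmit's seminormal basis: each irreducible $M_{\lambda,\mu}$ admits a basis indexed by standard bitableaux of shape $(\lambda,\mu)$ on which all Jucy--Murphy elements $K_i$ act diagonally (the type BC analogue of Young's seminormal form; see \cite{DipperJamesMathas}). This gives the required diagonalizability on each $M_{\lambda,\mu}$ and hence on $V_n^{\otimes d}$.

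The main obstacle is simply identifying a clean reference for the explicit diagonal action of $K_i$ on the seminormal basis in type BC; the eigenvalues then automatically fit the form predicted by Proposition \ref{eigenvaluesKi}. A more self-contained alternative would induct on $i$ with Lemma \ref{newlemma}: the swap vector $w = (q^{-1}-q)bv + (a-b)T_i v$ appearing in that proof is the natural tool to split a hypothetical nontrivial Jordan block of $K_{i+1}$ inside a simultaneous $K_1,\ldots,K_i$-eigenspace. The only genuine difficulty in that approach would be the degenerate case $b = q^{\pm 2}a$ of the lemma, which would need to be handled by a dimension count against the eigenvalue list of Proposition \ref{eigenvaluesKi}; the seminormal-basis route bypasses this bookkeeping entirely.
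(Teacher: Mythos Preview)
Your argument is correct: the seminormal (Hoefsmit) basis for the irreducible $\HB$-modules does diagonalize all Jucys--Murphy elements simultaneously, and the reduction via the bimodule decomposition \eqref{eq:SWdecompositionB} is valid. However, the paper takes a genuinely different and more self-contained route. Rather than importing the seminormal form, the paper first observes that the central element $c_K=\prod_i K_i$ acts by a scalar on each irreducible bimodule summand in \eqref{eq:SWdecompositionB} (because that decomposition is multiplicity free), hence $c_K$ is diagonalizable. It then inducts on $d$: the operators $K_1,\ldots,K_{d-1}$ are diagonalizable by the induction hypothesis (each one factors as a diagonalizable map on $V_n^{\otimes i}$ tensored with the identity), and since $K_d = c_K K_{d-1}^{-1}\cdots K_1^{-1}$ is a product of pairwise commuting diagonalizable operators (Lemmas \ref{lem:kicommute} and \ref{cKcentral}), it too is diagonalizable.

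Your approach has the advantage of yielding the explicit eigenvalues immediately, while the paper's proof avoids any external input beyond Schur--Weyl duality and the elementary commutation lemmas already established. Your suggested alternative via Lemma \ref{newlemma} is closer in spirit to the paper's induction, but the paper sidesteps the degenerate case $b=q^{\pm 2}a$ you flag by using $c_K$ instead of the swap vector.
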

\begin{proof}
We first show that the element $c_K=\prod_{i=1}^d K_i$ is diagonalizable.
The element $c_K$ is central in $\HB$ by Lemma \ref{cKcentral}. It further commutes with the action of $\SB(n;d)$, so it is a central ($\SB(n;d),\HB$)-bimodule action of $V\td$ (if we view $(\SB(n;d),\HB)$-bimodule as a left $\SB(n;d)\otimes \HB^{op}$-module, then $c_K$ is in the center of $\SB(n;d)\otimes \HB^{op}$). 
Since the decomposition is multiplicity free, $c_K$ acts by a scalar on each irreducible bimodule summand of $V\td$, hence diagonal on $V\td$. 

Now we proceed by induction on $d$. We know that $K_1$ is diagonalizable, which takes care of the $d=1$ case. Let $d>1$. By induction hypoethesis, for each $i<d$, $K_i$ is diagonalizable. (In fact, the induction hypothesis says that $K_i$ is diagonalizable on $V^{\otimes i}$, but then $K_i|_{V\td}=K_i|_{V^{\otimes i}}\otimes \id^{\otimes d-i}$ is also diagonalizable.) Writing $K_d=c_K K_{d-1}\inv \cdots K_1\inv$, we see that $K_d$ is a product of diagonalizable elements. By Lemma \ref{lem:kicommute} and Lemma \ref{cKcentral}, the elements all commute and hence are simultaneously diagonalizable. This implies that $K_d$ is diagonalizable. 
\end{proof}

\begin{rem}
The Schur algebra defined above is a generalization of the type A $q$-Schur algebra of Dipper and James \cite{DipperJames}. It has first appeared in~\cite{Greenhyper} and it is the same Schur algebra appearing in~\cite{BWW} or in~\cite{LNX}.
It is different to the Cartan type B generalization defined in terms of the vector representation of the type B quantum group and the BMW algebra.
\end{rem}

\subsection{Young symmetrizers for $\mathcal{H}_{Q,q}^B(d)$}\label{subsec:youngsym}

In this subsection, we assume $k=\mathbb C$ and $Q,q$ are generic. 
We explain the construction of certain Young symmetrizers for the Hecke algebra $\mathcal{H}_{Q,q}^B(d)$ following Dipper and James \cite{DJ92}. We then describe irreducible representations of $U_{Q,q}^B$ as images of these Young symmetrizers acting on $V_n\td$ by Schur-Weyl duality in Theorem~\ref{thm:doublecentralizerB}.  

Consider the following elements $u_i^{\pm} \in \mathcal{H}_{Q,q}^B(d)$:
\begin{equation}\label{eq:u}
u_i^+ = \prod_{j=1}^{i} (K_j + Q), \,\,\,\,\,\, u_i^- = \prod_{j=1}^{i} (K_j - Q^{-1}). 
\end{equation}

Given $a$ and $b$ non-negative integers, define $w_{a,b} \in W^A(d) \subset W^B(d)$ to be the element given in two line notation by

\begin{equation}\label{eq:wab}
      w_{a,b} = \bigl(\begin{smallmatrix}
    1 & \cdots & b & b+1 & \cdots & a+b \\
    a+1 & \cdots & a+b & 1 &  \cdots  & a
  \end{smallmatrix}\bigr).
\end{equation}
 
Let $T_{a,b}:=T_{w_{a,b}}$ be the corresponding element in $\mathcal{H}_{Q,q}^B(d)$. 
Let $\tilde{z}_{b,a}$ be the element defined in \cite[Definition 3.24]{DJ92}. Note that by definition $\tilde{z}_{b,a}$ is a central element of $\mathcal{H}_q(S_a \times S_b) \subset \mathcal{H}^A_q(a+b) \subset \mathcal{H}_{Q,q}^B(a+b)$, where we define $\mathcal{H}_q(S_a\times S_b)$ as the subalgebra of $\mathcal{H}_q^A(a+b)$ with generators $T_i, i \neq a$. The element $\tilde{z}_{b,a}$ satisfies 
\[ u_a^+ T_{a,b}u_b^- T_{b,a} u_a^+ T_{a,b}u_b^- = \tilde{z}_{b,a} u_a^+ T_{a,b}u_b^-.  \]
and it is invertible by \cite[\S 4.12]{DJ92}.
Finally define the following element as in \cite[Definition 3.27]{DJ92}:
\begin{equation}\label{eq:eab}
e_{a,b} = T_{a,b} u_b^{-1} T_{b,a} u_a^+ \tilde{z}^{-1}_{b,a}     
= \tilde{z}^{-1}_{b,a} T_{b,a}u_b^{-1} T_{b,a} u_a^+.
\end{equation}
Then $e_{a,b}$ commutes with all elements in $\mathcal{H}_q(S_a\times S_b)$. The following are proved in \cite{DJ92} under the assumption that the element
\begin{equation}\label{DJpolynomial}
f_d(Q,q) =\prod_{i=1-d}^{d-1} (Q^{-2}+q^{2i}) 
\end{equation}
is nonzero, which is covered under our assumption.
\begin{thm}\label{DJthm}
Let $a,b$ be non-negative integers such that $a+b=d$. Then 
\begin{enumerate}
\item $e_{a,b} \mathcal{H}_{Q,q}^B(d) e_{a,b} = e_{a,b}\mathcal{H}_q(S_a \times S_b) \simeq \mathcal{H}_q(S_a \times S_b)$. 
\item There is a Morita equivalence
\[ \mathcal{H}_{Q,q}^B(d) \simeq \oplus_{i=0}^d e_{i,d-i} \mathcal{H}_{Q,q}^B(d) e_{i,d-i}.  \]
\end{enumerate}
\end{thm}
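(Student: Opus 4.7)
The strategy is to follow the line of argument developed by Dipper and James in \cite{DJ92}, adapted to the two-parameter setting. The whole theorem rests on the observation that the elements $u_a^+$ and $u_b^-$ behave as (up to a scalar) orthogonal idempotents cutting the tensor space into isotypic components for the Jucy-Murphy elements $K_i$, and that $e_{a,b}$ is essentially an idempotent in $\HB$ projecting to such a component.

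\textbf{Step 1 (Idempotency of $e_{a,b}$).} The first task is to establish that $e_{a,b}$ is an idempotent. Using the two expressions for $e_{a,b}$ given in \eqref{eq:eab} and the centrality of $\tilde z_{b,a}$ in $\mathcal H_q(S_a\times S_b)$, one computes
\[ e_{a,b}^2 = \tilde z_{b,a}^{-2}\bigl(T_{b,a}u_b^-T_{b,a}u_a^+\bigr)\bigl(T_{a,b}u_b^-T_{b,a}u_a^+\bigr).\]
The inner factor $u_a^+T_{a,b}u_b^-T_{b,a}u_a^+T_{a,b}u_b^-$ equals $\tilde z_{b,a}\,u_a^+T_{a,b}u_b^-$ by the defining relation of $\tilde z_{b,a}$, which after a reorganization yields $e_{a,b}^2=e_{a,b}$. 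The non-vanishing of the polynomial $f_d(Q,q)$ in \eqref{DJpolynomial} ensures that $\tilde z_{b,a}$ is invertible and the eigenvalues of the $K_j$ used in defining $u_a^\pm$ are separated (c.f.\ Proposition~\ref{eigenvaluesKi}), so none of these inverses blow up.

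\textbf{Step 2 (Part (1)).} Once $e_{a,b}$ is an idempotent, one shows that $e_{a,b}$ commutes with every $T_i$ for $i\neq a$: this reduces to the commutation of $u_a^+$ with the $T_i$ acting only on the first $a$ strands and of $u_b^-$ with those acting only on the last $b$ strands, combined with the defining relations satisfied by $T_{a,b}$. This gives the inclusion $e_{a,b}\mathcal H_q(S_a\times S_b)\subseteq e_{a,b}\HB e_{a,b}$. For the reverse inclusion, the plan is to prove that any $T_w$ with $w\notin S_a\times S_b$ is annihilated on one side by $u_a^+$ or $u_b^-$ after conjugation by $T_{a,b}$, using that $u_a^+$ kills vectors on which some $K_j$ ($j\le a$) acts with eigenvalue $Q^{-1}q^{2\ell}$ (and dually for $u_b^-$); the desired identity then reduces via \eqref{eq:eab} to a computation that any such $T_w$ sandwiched between $u_a^+$ and $u_b^-$ falls into $\mathcal H_q(S_a\times S_b)$. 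Finally, the isomorphism $e_{a,b}\mathcal H_q(S_a\times S_b)\simeq \mathcal H_q(S_a\times S_b)$ follows since $e_{a,b}$ is a unit in $e_{a,b}\HB e_{a,b}$ acting by left multiplication and the map $h\mapsto e_{a,b}h$ is injective because $u_a^+$ and $u_b^-$ have no $K_j$-eigenvalue zero in the relevant eigenspaces.

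\textbf{Step 3 (Part (2), Morita equivalence).} For the Morita equivalence it suffices to prove that the idempotents $e_{i,d-i}$, $0\le i\le d$, are pairwise orthogonal and that $\HB e\HB = \HB$ for $e=\sum_{i=0}^{d}e_{i,d-i}$. Orthogonality comes from Step~1 together with the observation that $e_{i,d-i}$ and $e_{j,d-j}$ for $i\neq j$ both contain $u_{\max}^+$ on one side and $u_{\min}^-$ on the other with incompatible eigenvalue requirements on the $K_j$'s, so their product forces a polynomial in the $K_j$ that vanishes, again invoking $f_d(Q,q)\neq 0$. For the fullness condition $\HB e\HB=\HB$, one exploits the $(\SB\otimes \HB)$-bimodule decomposition \eqref{eq:SWdecompositionB}: every irreducible $\HB$-module $M_{\lambda,\mu}$ is known from \cite{DJ92} to appear as a direct summand of $e_{|\lambda|,|\mu|}\HB$ (built from $\mathcal H_q(S_{|\lambda|}\times S_{|\mu|})$-irreducibles via Step~2), so $\bigoplus_i e_{i,d-i}\HB$ is a progenerator of the semisimple (under the genericity hypothesis) category $\HB$-mod.

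\textbf{Main obstacle.} The technical heart of the argument is Step~2: proving that no $T_w$ with $w\notin S_a\times S_b$ survives in $e_{a,b}\HB e_{a,b}$ other than through $\mathcal H_q(S_a\times S_b)$. This requires carefully tracking the $K_j$-eigenvalues using Proposition~\ref{eigenvaluesKi} and Lemma~\ref{newlemma}, and checking that the appropriate factors of $u_a^+$ or $u_b^-$ produce the expected annihilations, which is exactly where Dipper and James' non-vanishing condition $f_d(Q,q)\neq 0$ becomes indispensable.
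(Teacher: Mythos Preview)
The paper does not prove this theorem at all: it is quoted verbatim from Dipper--James \cite{DJ92}, as the sentence immediately preceding the statement makes explicit. So there is no ``paper's own proof'' to compare against; your task was really to reconstruct the Dipper--James argument.

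Your outline has the right shape in Step~1 and the first half of Step~2, but two genuine gaps remain. First, in Step~3 you assert that the idempotents $e_{i,d-i}$ are pairwise orthogonal and sketch a proof by ``incompatible eigenvalue requirements on the $K_j$'s''. This is not how the argument works: the factors $u_a^+$, $u_b^-$, $T_{a,b}$ do not commute in any simple way, so there is no straightforward ``polynomial in the $K_j$ that vanishes'' once you multiply $e_{i,d-i}$ against $e_{j,d-j}$. What Dipper--James actually prove is the stronger statement $e_{i,d-i}\HB e_{j,d-j}=0$ for $i\neq j$, and this requires the full machinery of their paper (in particular their careful analysis of the ideals $u_a^+ T_{a,b} u_b^-\HB$), not an eigenvalue count. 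Second, your fallback argument for fullness invokes the bimodule decomposition~\eqref{eq:SWdecompositionB} and the classification of simple $\HB$-modules. But that classification is itself a consequence of Theorem~\ref{DJthm} in \cite{DJ92}, so this is circular; moreover the decomposition~\eqref{eq:SWdecompositionB} requires semisimplicity, whereas the theorem is stated (and needed) under the weaker hypothesis $f_d(Q,q)\neq 0$. The ``main obstacle'' you flag in Step~2 is indeed the crux, and your plan there (``any $T_w$ with $w\notin S_a\times S_b$ is annihilated on one side'') is too optimistic: individual $T_w$'s are not killed, only certain combinations are, and isolating them is precisely the long computation in \cite[\S3--4]{DJ92}.
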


Let $e^a_\lambda\in \mathcal{H}_q^A(a)$ be the (type A) quantum Young symmetrizers (see Gyoja \cite{Gyoja} for a definition).
Since $q$ is generic, the algebra $\mathcal{H}_q(S_a \times S_b)=\mathcal{H}_q(S_a)\times \mathcal{H}_q(S_b)$ is semisimple, and the set $\{\mathcal{H}_q(S_a \times S_b)e^a_\lambda e^b_\mu\ |\ \lambda \vdash a, \mu \vdash b\}$ gives a complete list of isomorphism classes for irreducible $\mathcal{H}_q(S_a \times S_b)$-modules.
Now let
\[e_{\lambda,\mu}:=e_{a,b} e^a_\lambda e^b_\mu = e^a_\lambda e^b_\mu e_{a,b}.\] 
Then it follows from Theorem \ref{DJthm} that $\{\HB e_{\lambda,\mu}\ |\ (\lambda,\mu)\vdash d\}$ forms a complete list of non-isomorphic irreducible modules for $\HB$.

Now we apply the Schur-Weyl duality to construct all the irreducible polynomial $U^B_{Q,q}$-modules up to isomorphism.

\begin{prop}\label{egivesirred}
The image in $V_n\td$ of the action of $e_{\lambda,\mu}\in \mathcal{H}_{Q,q}^B(d)$ is isomorphic to $L^B_{\lambda,\mu}(n)$. 
\end{prop}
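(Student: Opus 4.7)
The plan is to leverage the Schur-Weyl bimodule decomposition \eqref{eq:SWdecompositionB} to reduce the statement to a computation entirely inside the Hecke algebra. Since the $\HB$-action on $V_n^{\otimes d}$ is on the right, applying $e_{\lambda,\mu}$ on both sides of \eqref{eq:SWdecompositionB} yields, as a $U_{Q,q}^B$-module,
\[V_n^{\otimes d}\cdot e_{\lambda,\mu}\;\cong\;\bigoplus_{(\lambda',\mu')\vdash_n d} L_{\lambda',\mu'}(n)\otimes \bigl(M_{\lambda',\mu'}\cdot e_{\lambda,\mu}\bigr).\]
Thus it suffices to prove that $M_{\lambda',\mu'}\cdot e_{\lambda,\mu}=0$ for $(\lambda',\mu')\neq (\lambda,\mu)$ and that $\dim\bigl(M_{\lambda,\mu}\cdot e_{\lambda,\mu}\bigr)=1$, after which the proposition is immediate.

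Under the generic assumption, the Hecke algebra $\HB$ is semisimple, so the Wedderburn decomposition gives $\HB\cong\bigoplus_{(\lambda',\mu')}\operatorname{End}_k(M_{\lambda',\mu'})$. Write $e_{\lambda,\mu}=\sum_{(\lambda',\mu')} e_{\lambda,\mu}^{(\lambda',\mu')}$ accordingly. Now recall from the discussion preceding the proposition that $\HB\, e_{\lambda,\mu}$ is an irreducible left $\HB$-module isomorphic to the simple module indexed by $(\lambda,\mu)$. If any component $e_{\lambda,\mu}^{(\lambda',\mu')}$ with $(\lambda',\mu')\neq (\lambda,\mu)$ were nonzero, then $\HB\, e_{\lambda,\mu}$ would contain a simple summand supported on the $(\lambda',\mu')$-block, contradicting irreducibility. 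Hence $e_{\lambda,\mu}$ is concentrated in the $(\lambda,\mu)$-block. Moreover, within $\operatorname{End}_k(M_{\lambda,\mu})$ the element $e_{\lambda,\mu}$ must be a rank-$1$ operator, since the left ideal it generates has dimension equal to $\dim M_{\lambda,\mu}$ (i.e.\ one copy of the simple module).

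The conclusion now follows directly: right multiplication by $e_{\lambda,\mu}$ annihilates $M_{\lambda',\mu'}$ for $(\lambda',\mu')\neq (\lambda,\mu)$ because $e_{\lambda,\mu}$ has no component in those blocks, while on $M_{\lambda,\mu}$ it acts by a rank-$1$ endomorphism whose image is therefore one-dimensional. Substituting these dimensions back into the displayed decomposition gives $V_n^{\otimes d}\cdot e_{\lambda,\mu}\cong L_{\lambda,\mu}(n)\otimes k\cong L_{\lambda,\mu}(n)$. (When $L_{\lambda,\mu}(n)=0$ both sides vanish and the statement is trivially true.)

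The only subtle point—and what I expect to be the main obstacle in writing the argument carefully—is the left/right bookkeeping for $\HB$-modules: the element $e_{\lambda,\mu}$ is characterized through the \emph{left} module $\HB\, e_{\lambda,\mu}$, whereas the image under study is formed by \emph{right} multiplication on $V_n^{\otimes d}$. Under semisimplicity the two sides are coupled via the Wedderburn blocks (the left and right simples are in natural bijection via the same central idempotents), so the block-localisation argument transfers cleanly; without semisimplicity, this step would be considerably more delicate, which is consistent with why the paper makes the generic assumption throughout \S\ref{subsec:youngsym}.
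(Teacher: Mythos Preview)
Your proof is correct and follows essentially the same approach as the paper: both start from the bimodule decomposition \eqref{eq:SWdecompositionB} and reduce to showing that $e_{\lambda,\mu}$ kills all $M_{\lambda',\mu'}$ except $M_{\lambda,\mu}$, on which it has one-dimensional image. The only difference is in how this last step is justified: the paper rewrites $V_n^{\otimes d}e_{\lambda,\mu}\cong V_n^{\otimes d}\otimes_{\HB}\HB e_{\lambda,\mu}$ and computes $M_{\lambda',\mu'}\otimes_{\HB}M_{\lambda,\mu}\cong\delta_{(\lambda',\mu'),(\lambda,\mu)}k$, invoking that $\HB$ is a symmetric algebra to match the left/right labelings, whereas you argue directly via the Wedderburn decomposition and the rank of $e_{\lambda,\mu}$---the same content, phrased differently.
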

\begin{proof}
This follows from the bimodule decomposition \eqref{eq:SWdecompositionB} of $V_n\td$. That is,
\begin{equation*}
    \begin{split}
        V_n\td e_{\lambda,\mu}\cong V_n\td\otimes_{\HB} \HB e_{\lambda,\mu}&\cong \bigoplus_{\lambda',\mu'} L_{\lambda',\mu'}(n)\otimes M_{\lambda',\mu'}\otimes_{\HB} \HB e_{\lambda,\mu}\\
        &\cong\bigoplus_{\lambda',\mu'} L_{\lambda',\mu'}(n)\otimes M_{\lambda',\mu'}\otimes_{\HB} M_{\lambda,\mu}\\
        &\cong\bigoplus_{\lambda',\mu'} L_{\lambda',\mu'}(n)\otimes \delta_{(\lambda,\mu),(\lambda',\,u')}k\\        
        &\cong L_{\lambda,\mu}(n).
    \end{split}
\end{equation*}
In the second from the last isomorphism, we use that $\HB$ is a symmetric algebra (see~\cite[Section 5]{CIK}).
\end{proof}
There is no explicit formula for $\tilde{z}_{b,a}$ and therefore the element $e_{a,b}$ is not useful when performing explicit computations. We can bypass this difficulty by working with the following element: 
\begin{equation}\label{defe'}
e'_{\lambda,\mu}:= T_{a,b} u_b^{-1} T_{b,a} u_a^+   e^a_\lambda e^b_\mu=e_{\lambda,\mu} \tilde{z}_{b,a}.
\end{equation}

\begin{prop}\label{e'givesirred}
The image in $V_n\td$ of the action of $e'_{\lambda,\mu}\in\mathcal{H}_{Q,q}^B(d)$ is isomorphic to $L_{\lambda,\mu}(n)$. 
\end{prop}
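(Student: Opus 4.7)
The plan is to reduce Proposition~\ref{e'givesirred} to Proposition~\ref{egivesirred} via the identity $e'_{\lambda,\mu}=e_{\lambda,\mu}\tilde z_{b,a}$ in \eqref{defe'}. Concretely, since right multiplication by $\tilde z_{b,a}$ on $V_n^{\otimes d}$ commutes with the left $U^B_{Q,q}$-action (the $\HB$-action commutes with the $U^B_{Q,q}$-action by Theorem~\ref{thm:doublecentralizerB}), it yields a $U^B_{Q,q}$-module homomorphism
\[
\rho_{\tilde z_{b,a}}\colon V_n^{\otimes d} e_{\lambda,\mu}\longrightarrow V_n^{\otimes d},\qquad v\mapsto v\tilde z_{b,a},
\]
whose image is precisely $V_n^{\otimes d} e_{\lambda,\mu}\tilde z_{b,a}=V_n^{\otimes d} e'_{\lambda,\mu}$.

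Next I would invoke Proposition~\ref{egivesirred}, which identifies the source $V_n^{\otimes d} e_{\lambda,\mu}$ with the irreducible $U^B_{Q,q}$-module $L_{\lambda,\mu}(n)$. Hence $\rho_{\tilde z_{b,a}}$ is a $U^B_{Q,q}$-linear map out of an irreducible module and is either zero or injective by Schur's lemma.

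To rule out zero, I would use invertibility of $\tilde z_{b,a}$ in $\HB$ (which follows from \cite[\S 4.12]{DJ92} and is already used in the definition \eqref{eq:eab}): if $v e'_{\lambda,\mu}=v e_{\lambda,\mu}\tilde z_{b,a}$ vanished on all of $V_n^{\otimes d}$, then multiplying on the right by $\tilde z_{b,a}^{-1}$ would give $V_n^{\otimes d} e_{\lambda,\mu}=0$, contradicting Proposition~\ref{egivesirred}. Therefore $\rho_{\tilde z_{b,a}}$ is a nonzero, hence injective, $U^B_{Q,q}$-homomorphism, and its image $V_n^{\otimes d} e'_{\lambda,\mu}$ is isomorphic to $L_{\lambda,\mu}(n)$, as required.

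There is no real obstacle here; the only subtlety worth double-checking is the equality $e'_{\lambda,\mu}=e_{\lambda,\mu}\tilde z_{b,a}$ recorded in \eqref{defe'}, which relies on $\tilde z_{b,a}$ being central in $\mathcal H_q(S_a\times S_b)$ (so that it commutes with $e^a_\lambda e^b_\mu$). With that identity in hand, the argument is a one-line application of Schur's lemma to the $U^B_{Q,q}$-equivariant twist by an invertible element of $\HB$.
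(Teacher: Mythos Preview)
Your proposal is correct and follows essentially the same approach as the paper: both consider the $U^B_{Q,q}$-linear map $V_n^{\otimes d} e_{\lambda,\mu}\to V_n^{\otimes d} e'_{\lambda,\mu}$ given by right multiplication by the invertible element $\tilde z_{b,a}$, and conclude via Proposition~\ref{egivesirred}. The paper argues the map is an isomorphism directly from invertibility of $\tilde z_{b,a}$ rather than passing through Schur's lemma, but this is a cosmetic difference; one minor caveat is that your ``contradicting Proposition~\ref{egivesirred}'' is not literally a contradiction when $L_{\lambda,\mu}(n)=0$, though in that case both images vanish and the claim is trivial.
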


\begin{proof}
By Proposition \ref{egivesirred}, it is enough to show that $V_n\td e_{\lambda,\mu}$ is isomorphic to $V_n\td e'_{\lambda,\mu}$.
Consider the map \[m:  V_n\td e_{\lambda,\mu} \to V_n\td e'_{\lambda,\mu}= V_n\td e_{\lambda,\mu} \tilde{z}_{b,a}\] given by the (right) action of $\tilde{z}_{b,a}\in \HB$ on $V_n\td e_{\lambda,\mu}$.
Since the $U_{Q,q}^B$ action on $V_n\td e_{\lambda,\mu}$ commutes with the $\HB$ action, the map $m$ is an $U_{Q,q}^B$-morphism.
Since $\tilde{z}_{b,a}$ is invertible, the map $m$ is an $U_{Q,q}^B$-isomorphism.
\end{proof}

The elements $e'_{\lambda,\mu}$ are not (quasi-)idempotents, but we still call them Young symmetrizers.

\subsection{Permutation modules for Hecke algebras}\label{subsec:permutation}

Given $\mathbf a \in \mathbb I_n^d$, the subspace $V(\mathbf a)$ of $V_n\td$ spanned by \{$v_{\sigma\mathbf a}\ |\ \sigma\in W^B \}$ is invariant under the action of $\mathcal{H}_{Q,q}^B(d)$. Sometimes we write $V(\mathbf a, n)$ to clarify where $\mathbf a$ belongs. Thus, we have a decomposition
\begin{equation}\label{eq:tensordecompositionheclemodule} 
V_n\td=\bigoplus_{\mathbf a\in \mathbb I_n^d/W^B(d)} V(\mathbf a,n) 
\end{equation}
as $\mathcal{H}_{Q,q}^B(d)$-modules. 

Alternatively, we can index the permutation modules by compositions of $d$. Let $\theta := \left(\theta_{\frac{-n+1}{2}}, \cdots, \theta_{\frac{n-1}{2}}\right)$ be a composition of $d$. Define $\mathbf{a}(\theta)$ via the following equation: 
\begin{equation}\label{aintermsoftheta}
v_{\mathbf{a}(\theta)} :=  \bigotimes_{j=-(n-1)/2}^{(n-1)/2} v_{j}^{\otimes \theta_{j}}.
\end{equation}
Let $V_{\theta}:=V(\mathbf{a})$ be the subspace of $V_n^{\otimes d}$ spanned by $v_{s(\mathbf{a})}, s \in W^B(d)$. Then $V_\theta$ is a direct summand of $V_n^{\otimes d}$ (as an $\mathcal{H}_{Q,q}^B(d)$-module). Moreso, $V_n^{\otimes d}$ is a direct sum of objects isomorphic to $V_\theta$ for certain $\theta$. 

Adding $0$'s in pairs at a place $j>0$ to a composition $\theta = \left(\theta_{\frac{-n+1}{2}}, \cdots, \theta_{\frac{n-1}{2}}\right)$ means defining a new composition $\theta ' = \left(\theta_{\frac{-n-1}{2}} ', \cdots, \theta_{\frac{n+1}{2}} ' \right)$ such that:
\[ \theta_{l} ':= \begin{cases} 
      \theta_l & \textnormal{ if } -j<l <j, \\
      0  & \textnormal{ if } l = \pm j , \\
      \theta_{l\pm 1}  & \textnormal{ if } l \gtrless \pm j. 
      \end{cases}    \]
 For example, adding $0$'s at $j=1$ to $\theta = (2,1,2)$ produces $\theta ' = (2,0,1,0,2)$. If $V_\theta \subset V_{n}^{\otimes d}$, then clearly $V_{\theta'} \subset V_{n+2}^{\otimes d}$.      
      
Adding a $0$ at $j=0$ to a composition $\theta$ as above for $n$ even means defining a new composition $\theta ' = \left(\theta_{\frac{-n}{2}} '', \cdots, \theta_{\frac{n}{2}} '' \right)$ such that:
\[ \theta_{l} '':= \begin{cases} 
      0  & \textnormal{ if } l = 0 , \\
      \theta_{l\mp \frac{1}{2}}  & \textnormal{ if } l \gtrless 0. 
      \end{cases}    \]
For example adding a $0$ at $j=0$ to $\theta = (1,2,3,4)$ produces $\theta '' = (1,2,0,3,4)$. If $V_\theta \subset V_{n}^{\otimes d}$, then $V_{\theta ''} \subset V_{n+1}^{\otimes d}$.       
      
There is an obvious inverse procedure to adding $0$'s in pairs at a place $j>0$ if $\theta_{\pm j}=0$ (and similarly there is an inverse procedure for adding a $0$ at $j=0$ when $\theta_0=0$).

\begin{lem}\label{lem:permutationmodulesiso}
The $\mathcal{H}_{Q,q}^B(d)$-modules $V_\theta$, $V_{\theta '}$ and $V_{\theta ''}$ are isomorphic. 
\end{lem}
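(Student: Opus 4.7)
The plan is to construct the $\mathcal{H}_{Q,q}^B(d)$-linear isomorphisms $V_\theta \cong V_{\theta'}$ and $V_\theta \cong V_{\theta''}$ via explicit bijections on the standard bases. Inspecting the defining formulas \eqref{eq:Rmatrixdef} and \eqref{eq:Kmatrixdef}, the action of $T_i$ for $i>0$ on a basis vector $v_{\mathbf{b}}$ depends only on whether the $i$-th and $(i+1)$-th entries of $\mathbf{b}$ are equal or which is larger, while the action of $T_0$ depends only on whether the first entry of $\mathbf{b}$ is zero, positive, or negative. Consequently, any injection $\mathbb{I}_n \hookrightarrow \mathbb{I}_{n'}$ that is strictly order-preserving and commutes with the sign involution $l\mapsto -l$ will, after entry-wise application to $\mathbb{I}_n^d$, induce an $\mathcal{H}_{Q,q}^B(d)$-equivariant isomorphism between the corresponding permutation modules.

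For the case $V_\theta \cong V_{\theta'}$, I would define $f: \mathbb{I}_n \to \mathbb{I}_{n+2}$ by $f(l)=l$ for $|l|<j$, $f(l)=l+1$ for $l\geq j$, and $f(l)=l-1$ for $l\leq -j$. This map is strictly increasing, satisfies $f(-l)=-f(l)$, has image avoiding $\pm j$, and the construction of $\theta'$ from $\theta$ gives $\theta'_{f(l)}=\theta_l$ for every $l\in \mathbb{I}_n$. Hence applying $f$ entry-wise sends $\mathbf{a}(\theta)$ to $\mathbf{a}(\theta')$ and commutes with the $W^B(d)$-action on $\mathbb{I}_n^d$ (permutations together with sign changes), producing a bijection between the corresponding $W^B(d)$-orbits; the linear extension $\phi:V_\theta \to V_{\theta'}$ is then the desired Hecke isomorphism by the observation of the first paragraph.

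For $V_\theta \cong V_{\theta''}$ (the case $n=2r$ even and $j=0$), I would instead use $g:\mathbb{I}_{2r} \to \mathbb{I}_{2r+1}$ defined by $g(l)=l+\half$ for $l>0$ and $g(l)=l-\half$ for $l<0$; this $g$ is again strictly increasing, sign-equivariant, and misses $0$, and $\theta''_{g(l)}=\theta_l$ by construction, so the same argument yields the isomorphism. The only real obstacle is bookkeeping: one must keep the two indexing conventions ($\mathbb{I}_{2r+1}$ using integers, $\mathbb{I}_{2r}$ using half-integers) straight and verify that $f$ and $g$ transport $\mathbf{a}(\theta)$ to $\mathbf{a}(\theta')$ and $\mathbf{a}(\theta'')$ respectively, but no substantive issue arises because equivariance follows immediately from the case distinctions in \eqref{eq:Rmatrixdef} and \eqref{eq:Kmatrixdef}.
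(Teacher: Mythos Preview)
Your proposal is correct and follows essentially the same approach as the paper: both construct the isomorphism by an entry-wise relabeling $\mathbb I_n\hookrightarrow \mathbb I_{n'}$ that is order-preserving and sign-equivariant, and then observe that the Hecke action formulas \eqref{eq:Rmatrixdef}, \eqref{eq:Kmatrixdef} depend only on the order and sign data of the indices. Your version is in fact a bit more explicit than the paper's in isolating precisely why equivariance holds, but the underlying idea is identical.
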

\begin{proof}
Let us explain the isomorphism between $V_\theta$ and $V_{\theta '}$ since the case $V_{\theta ''}$ is similar.

The space $V_\theta$ is spanned as an $\mathcal{H}_{Q,q}^B(d)$-module by the vector $v_{\mathbf{a}}$, for $\mathbf{a}$ given in terms of $\theta$ by equation~\eqref{aintermsoftheta}, while the space  $V_{\theta '}$ is spanned by vector $v_{\mathbf{a}'}$ for $\mathbf{a}'$ given in terms of $\theta '$ by equation~\eqref{aintermsoftheta}. There is a unique vector space isomorphism between $V_\theta$ and $V_{\theta '}$ that maps $v_{s \mathbf{a} } \mapsto v_{s \mathbf{a} '}$ for all $s \in W^B(d)$. Because of the way the vector space isomorphism is defined (i.e. it is essentially defined on pure tensors by replacing $v_i / v_{-i}$ by $v_{i+1} / v_{-i-1}$ for all $i>j$), this map commutes with the action of $T_i$ defined in~\eqref{HeckeactionBC} and therefore is an isomorphism of $\mathcal{H}_{Q,q}^B(d)$-modules. 

For example, if $\theta = (2,1,3)$ and $\theta ' = (2,0,1,0,3)$, then $v_{\mathbf{a}} = v_1\otimes v_1\otimes v_0 \otimes v_{-1} \otimes v_{-1} \otimes v_{-1}$ and $v_{\mathbf{a} '} = v_2\otimes v_2\otimes v_0 \otimes v_{-2} \otimes v_{-2} \otimes v_{-2}$. The isomorphism between $V_\theta$ and $V_{\theta '}$ maps, for example, $v_1\otimes v_0\otimes v_{1} \otimes v_{1} \otimes v_{-1} \otimes v_{-1} \mapsto v_2\otimes v_0\otimes v_{2} \otimes v_{2} \otimes v_{-2} \otimes v_{-2}$.
\end{proof}

In terms of $\mathbf a\in \mathbb I_n^d$, we get the following stability lemma.

\begin{lem}\label{permut}\label{lemma:generationodd}
Let $r\geq d$. Then for any $n$ and $\mathbf a\in \mathbb I_n^d$, the $\mathcal{H}_{Q,q}^B(d)$-module $V(\mathbf a,n)$ is isomorphic to $V(\mathbf b,2r+1)$ for some $\mathbf b\in \mathbb I_{2r+1}^d$.
\end{lem}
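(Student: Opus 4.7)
The plan is to pass to the composition description of $\HB$-permutation modules from \S\ref{subsec:permutation} and then resize the composition using the isomorphisms of Lemma \ref{lem:permutationmodulesiso}. Given $\mathbf{a}\in \mathbb{I}_n^d$, the module $V(\mathbf{a},n)$ depends only on the $W^B(d)$-orbit of $\mathbf{a}$, so I can replace $\mathbf{a}$ by $\mathbf{a}(\theta)$ for a composition $\theta$ of $d$ indexed by $\mathbb{I}_n$ read off from the multiset of entries of $\mathbf{a}$. Hence $V(\mathbf{a},n) = V_\theta$, and it suffices to construct a composition $\theta^*$ of $d$ indexed by $\mathbb{I}_{2r+1}$ with $V_\theta \simeq V_{\theta^*}$; then $\mathbf{b}:=\mathbf{a}(\theta^*)\in \mathbb{I}_{2r+1}^d$ satisfies the conclusion.

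Since $\theta$ is a composition of $d$ with non-negative parts, its support has at most $d$ elements. By Lemma \ref{lem:permutationmodulesiso}, I may freely insert or delete a pair of zeros at positions $\pm j$ with $j>0$, and insert or delete a single zero at position $0$ (when the parity of the length permits), without changing the isomorphism class of $V_\theta$. The problem reduces to showing that a finite sequence of such moves can transform $\theta$ into a composition of length $2r+1$.

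If $n\leq 2r+1$, I add zero pairs at new positions with large $|j|$ (and, if $n$ is even, first insert a zero at position $0$ to fix parity) until the length is $2r+1$. If $n>2r+1$, I shrink $\theta$ by deleting empty pairs: among the $\lfloor n/2\rfloor$ pairs $(j,-j)$ with $j>0$ in $\mathbb{I}_n$, at most $d\leq r<\lfloor n/2\rfloor$ carry a nonzero entry, so an empty pair always exists and can be removed. Iterating brings the length down to $2r+1$ or $2r+2$; in the latter case I insert a zero at position $0$ to reach length $2r+3$ and then delete one more empty pair, which is still available since the support has at most $d<r+1$ elements in $\mathbb{I}_{2r+3}$.

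The main obstacle is handling parity in the shrinking case, which forces a temporary increase in length before the final contraction. Both the availability of empty pairs during shrinking and the existence of an empty pair after the parity adjustment reduce to the hypothesis $r\geq d$, which provides exactly the needed slack between the support size and the target length $2r+1$.
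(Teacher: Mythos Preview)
Your proof is correct and follows essentially the same approach as the paper: pass to the associated composition $\theta$ and apply Lemma~\ref{lem:permutationmodulesiso} repeatedly to resize $\theta$ to length $2r+1$. The only difference is cosmetic: the paper fixes parity first (for even $n$, insert a zero at position $0$ immediately, then grow or shrink by pairs), whereas you shrink first and repair parity at the end via a temporary overshoot to $2r+3$. Both arguments rely on the same counting observation that the support of $\theta$ has at most $d\leq r$ elements, so empty pairs are always available when needed.
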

\begin{proof}
The result follows by use of Lemma \ref{lem:permutationmodulesiso}. Let $\theta(\mathbf{a})$ be the composition associated to $\mathbf{a}$ and let $\theta(\mathbf{b})$ be the composition associated to $\mathbf{b}$. If $n$ is odd and less than or equal to $2r+1$, we can add $0$'s in pairs to $\theta(\mathbf{a})$ to obtain a $\theta (\mathbf{b})$ such that $V(\mathbf a,n) \simeq V_{\theta(\mathbf{a})} \simeq V_{\theta(\mathbf{b})} \simeq V(\mathbf b,2r+1)$. If $n$ is larger than $2r+1$ then $n$ is larger than $2d+1$ and the composition $\theta(\mathbf{a})$ has at most $d$ non-zero entries. Therefore we can subtract $0$'s in pairs from $\theta (\mathbf{a})$ to obtain a $\theta (\mathbf{b})$ with the required properties. 

If $n$ is even, we first add a $0$ at $j=0$ to the composition associated to $\mathbf{a}$ and then follow the same procedure as in the odd $n$ case. 
\end{proof}

\subsection{Generalized Schur algebras and $e$-Hecke algebras}\label{sec:RKe} 

The category of polynomial representations of $\Uq$ is a braided monoidal category. That is, given polynomial $\Uq$-modules $V$ and $W$, there is a $\Uq$-module isomorphism $R_{V,W}:V \otimes W \to W \otimes V$ that satisfies the Yang-Baxter equation:
 \begin{equation}\label{eqybe}
  (R_{W,U} \otimes \id_V)  (\id_W \otimes R_{V,U})(R_{V,W} \otimes \id_U)= (\id_{U}\otimes R_{V,W})(R_{V,U}\otimes \id_W) (\id_V \otimes R_{W,U}).
 \end{equation}
 One can build such a map inductively, by starting with $R_{V_n,V_n}=R_q$ in \eqref{eq:Rmatrixdef}, defining $R_{V_n\td, V_n^{\otimes e}}$ by use of the formulas
\begin{equation}\label{eq:inductiveRmatrix}
\begin{split}
R_{X\otimes Y, Z} = ( R_{X,Z} \otimes \id_Y)(\id_X \otimes R_{Y,Z}), \\ 
R_{X, Y \otimes Z} = (\id_Y \otimes R_{X,Z}) ( R_{X,Y} \otimes \id_Z), 
\end{split}
\end{equation}
and then realizing any indecomposable degree $d$ representation of $\Uq$ as a subquotient of $V_n\td$. In the following, we denote $R_{V,V}$ by $R_V$. 

Similarly, given $V$ a polynomial $\Uq$-module of degree $d$ viewed as a representation of the coideal subalgebra $U_{Q,q}^B$, then there exists a $K$-matrix $K_V$ that is an $U_{Q,q}^B$-isomorphism and satisfies the reflection equation:
\begin{equation}\label{eqreflection}
(K_V \otimes \id_W)R_{W,V} (K_W \otimes \id_V)R_{V,W} = R_{W,V} (K_W \otimes \id_V)R_{V,W} (K_V \otimes \id_W). 
\end{equation}
Again, one can obtain the $K$-matrix on polynomial representations inductively, by starting with $K_{V_n}:=K_Q$ and using the formula:
\begin{equation}\label{eq:indK}
K_{V\otimes W} =  (K_V \otimes \id_W)R_{W,V} (K_W \otimes \id_V)R_{V,W}.
\end{equation}
In particular, this implies that $K_{V_n\td}$ is given by the action of $K_d K_{d-1}\cdots K_1$ on $V_n\td$, and for every subquotient $V$ of $V_n\td$, the $K$-matrix $K_V$ is obtained by restriction. 

In the Weyl group $W^A(de)$ with simple reflections $s_i, 1 \leq i \leq de-1$, consider the elements $w_i, 1 \leq i \leq d-1$ given 
in two line notation by
\begin{equation}\label{def:wi}   w_i = \bigl(\begin{smallmatrix}
    1 & \cdots & e(i-1) & ei-e+1 & \cdots & ei & ei+1& \cdots & ei+e-1 & ei+e & \cdots &de \\
    1 & \cdots & e(i-1) & ei+1 & \cdots & ei+e-1 & ei-e+1& \cdots & ei & ei+e & \cdots & de
  \end{smallmatrix}\bigr).
\end{equation}
Note that $w_i$ is the longest element in the parabolic subgroup (isomorphic to $W^A(e)$) in $W^A(de)$ generated by $s_{e(i-1)+1},\cdots,s_{ei-1}$.
 
Following \cite{BuciumasKo1}, we define $\mathcal{H}_{q}^A(d,e)$ as the subalgebra of $\mathcal{H}_{q}^A(de)$ generated by $T_{w_i}, 1 \leq i \leq d-1$. We call $\mathcal{H}_{q}^A(d,e)$ the $e$-Hecke algebra (of Coxeter type A). 

Let $V$ be a $\Uq$-module of degree $e$ and $R_V$ be its $R$-matrix. Then one can show (see the discussion after Definition 2.9 in \cite{BuciumasKo1}) that there is a right action of $\mathcal{H}_q^A(d;e)$ on $V^{\otimes d}$, where $T_{w_i}$ acts as $(R_{V})_{i,i+1}$. 

In the Weyl group $W^B(de)$ with simple reflections $s_i, 0 \leq i \leq de-1$, consider the elements $w_i \in W^A(de) \subset W^B(de), 1 \leq i \leq d-1$ defined in equation~\eqref{def:wi} and the element $w_0$ given by 
\begin{equation}\label{def:w0} 
w_0= s_0 (s_1 s_0 s_1) \cdots (s_{e-1} \cdots s_1 s_0 s_1 \cdots s_{e-1}). 
\end{equation}
Note that $w_0$ is the longest element in the parabolic subgroup (isomorphic to $W^B(e)$) in $W^B(de)$ generated by $s_0,\cdots,s_{e-1}$.

\begin{defn}
Define $\mathcal{H}^B_{Q,q}(d,e)$ as the subalgebra of $\mathcal{H}_{Q,q}^B(de)$ generated by $T_{w_i}, 0 \leq i \leq d-1$. We call $\mathcal{H}^B_{Q,q}(d,e)$ the two-parameter $e$-Hecke algebra of Coxeter type B. 
\end{defn}

\begin{rem}
The $e$-Hecke algebras are simple to define but not well understood. 
For example, the dimension of $\mathcal{H}_{Q,q}^B(1,2)$ is $4$ for $Q,q$ generic (and therefore larger than $\mathcal{H}_{1,1}^B(1,2)\cong kS_2$). This follows from the fact that the $K$-matrix $K_{V_n^{\otimes 2}} \in \End_{U_{Q,q}^B}(V_4^{\otimes 2})$ generates a subalgebra in $\End_{U_{Q,q}^B}(V_4^{\otimes 2})$ isomorphic to $\mathcal{H}_{Q,q}^B(1,2)$ (this is because the action of $\mathcal{H}_{Q,q}^B(1,2)$ on $(V_n^{\otimes 2})^{\otimes 1}$ is faithful for $n \geq 2$) and the $K$-matrix has $5$ different eigenvalues for $n \geq 4$.  Similarly, the dimension of $\mathcal{H}_{Q,q}^B(1,e)$ is equal to the number of different eigenvalues of $K_{V_{2e}^{\otimes e}}$. 
But computing the dimension of $\mathcal{H}_{Q,q}^{B}(d,e)$, for general $d$, seems like a hard problem. This is also the case for $e$-Hecke algebras of type A.
\end{rem}

Let $V$ be a $\Uq$-module of degree $e$ and let $K_V$ be its associated $K$-matrix. We call $V$ a type B $e$-Hecke $\emph{triple}$. The word triple comes from the fact that when we write $V$ we implicitly mean the triple $(
V,R_V,K_V)$, where we abbreviate $R_V=R_{V,V}$. 
\begin{lem}
There is a right action of $\mathcal{H}_{Q,q}^B(d,e)$ on $V\td$ where $T_{w_i}$ acts by $(R_{V})_{i,i+1}$ for $i>0$ and $T_{w_0}$ acts by $(K_V)_1$. 
\end{lem}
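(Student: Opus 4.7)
The strategy is to first establish the lemma when $V=V_n^{\otimes e}$, where $V^{\otimes d}=V_n^{\otimes de}$ carries the standard right action of $\mathcal{H}^B_{Q,q}(de)$ from \eqref{HeckeactionBC}, and then extend to arbitrary polynomial $V$ by naturality of $R$- and $K$-matrices.

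For $V=V_n^{\otimes e}$, since $\mathcal{H}^B_{Q,q}(d,e)\subseteq \mathcal{H}^B_{Q,q}(de)$ is a subalgebra by definition, it acts on $V^{\otimes d}=V_n^{\otimes de}$ by restriction, and it suffices to identify the action of the generators $T_{w_i}$ with the claimed operators. The case $i>0$ is the type~A calculation of \cite{BuciumasKo1}. For $i=0$, I would use the reduced expression \eqref{def:w0} and group its simple reflections into palindromic blocks:
\[T_{w_0}=T_{s_0}\cdot (T_{s_1}T_{s_0}T_{s_1})\cdot (T_{s_2}T_{s_1}T_{s_0}T_{s_1}T_{s_2})\cdots (T_{s_{e-1}}\cdots T_{s_0}\cdots T_{s_{e-1}})=K_1K_2\cdots K_e.\]
The block lengths $1+3+\cdots+(2e-1)=e^2$ add up to $\ell(w_0)$, so the chosen expression is reduced and the first equality is legitimate; the second holds by the definition of the Jucys--Murphy elements $K_i$, and the order of multiplication is immaterial by Lemma~\ref{lem:kicommute}. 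By the explicit description immediately following \eqref{eq:indK}, this product acts on the first $e$ tensor slots of $V_n^{\otimes de}$ precisely as $K_{V_n^{\otimes e}}=K_V$, i.e.\ as $(K_V)_1$, as required.

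For an arbitrary polynomial $\Uq$-module $V$ of degree $e$, I would realize $V$ as a $\Uq$-subquotient of $V_n^{\otimes e}$ for some $n$. Then $V^{\otimes d}$ becomes a $\Uq$-subquotient of $V_n^{\otimes de}$ via iterated comultiplication, and hence also a $U^B_{Q,q}$-subquotient by the coideal property. Since the right $\mathcal{H}^B_{Q,q}(de)$-action commutes with the left $U^B_{Q,q}$-action on $V_n^{\otimes de}$ by Theorem~\ref{thm:doublecentralizerB}, the subalgebra $\mathcal{H}^B_{Q,q}(d,e)$ preserves $V^{\otimes d}$ and thus acts on it. By the naturality of $R$- and $K$-matrices with respect to $\Uq$- and $U^B_{Q,q}$-morphisms, the operators $(R_{V_n^{\otimes e}})_{i,i+1}$ and $(K_{V_n^{\otimes e}})_1$ descend on $V^{\otimes d}$ to $(R_V)_{i,i+1}$ and $(K_V)_1$ respectively, which completes the proof.

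The main obstacle is the second step, namely the identification $T_{w_0}=K_1K_2\cdots K_e$ together with the verification that this product acts on $V_n^{\otimes e}$ as $K_{V_n^{\otimes e}}$: the former requires a length-additivity check for the chosen reduced expression, while the latter depends on the inductive formula \eqref{eq:indK}, and implicitly on the reflection equation \eqref{eqreflection} to reconcile the two orderings in which $K_1\cdots K_e$ may be written on the Hecke side versus the $K$-matrix side. Once these identifications are in place, the subquotient and naturality argument of the final step is routine.
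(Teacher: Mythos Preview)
Your proof follows the same two-step outline as the paper's: first establish the case $V=V_n^{\otimes e}$ by restricting the $\mathcal{H}^B_{Q,q}(de)$-action and identifying the generators via the inductive formulas \eqref{eq:inductiveRmatrix} and \eqref{eq:indK}, then pass to subquotients. Your explicit identification $T_{w_0}=K_1K_2\cdots K_e$ and the observation that this matches the description of $K_{V_n^{\otimes e}}$ following \eqref{eq:indK} is exactly what the paper means by ``a similar argument can be made for the $K$-matrix via \eqref{eq:indK}''; you have simply spelled it out.

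One point deserves care: your appeal to Theorem~\ref{thm:doublecentralizerB} in the subquotient step is both unnecessary and not quite the right logic. That theorem is stated only for generic $Q,q$, and in any case ``commutes with $U^B_{Q,q}$'' does not by itself imply that an operator preserves a given $\Uq$-subquotient (an $A$-intertwiner need not stabilise an arbitrary $A$-submodule). What actually does the work---and what you correctly invoke in your next sentence---is the naturality of $R$ and $K$ with respect to $\Uq$-morphisms: the paper builds this into the very definition of $K_V$ (``for every subquotient $V$ of $V_n\td$, the $K$-matrix $K_V$ is obtained by restriction'', right after \eqref{eq:indK}). The paper's version of this step is the terse sentence ``a degree $e$ module of $\Uq$ is a subquotient of $V_n^{\otimes e}$ and therefore $(K_V)_1,(R_V)_{i,i+1}$ also satisfy the relations the generators $T_{w_i}$ satisfy''. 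If you drop the reference to Theorem~\ref{thm:doublecentralizerB} and keep the naturality argument, your proof is correct and coincides with the paper's.
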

\begin{proof}
First we prove this for $V = V_n^{\otimes e}$. Then the elements $T_{w_i} \in \mathcal{H}_{Q,q}^{B}(d,e)$ act on $(V_n^{\otimes e})^{\otimes d} = V_n^{\otimes de}$ by $T_{w_i}= T_{s_{ie+e-1}} \cdots T_{s_{ie+1}} = (R_{V_n})_{ie+e-1,ie+e} \cdots (R_{V_n})_{ie+1,ie+2} = (R_{V_n^{\otimes e}})_{i,i+1}$ where the last equality involves the use of equation~\eqref{eq:inductiveRmatrix}.  A similar argument can be made for the $K$-matrix via equation~\eqref{eq:indK}. 

This means that $(K_{V_n^{\otimes e}})_1, (R_{V_n^{\otimes e}})_{i,i+1} \in \End (V_n^{\otimes e})^{\otimes d}$ satisfy all the relations the generators $T_{w_i}$ satisfy. 
A degree $e$ module of $\Uq$ is a subquotient of $V_n^{\otimes e}$ and therefore $(K_{V})_1, (R_{V})_{i,i+1} \in \End (V^{\otimes d})$ also satisfy the relations the generators $T_{w_i}$ satisfy, giving rise to an $e$-Hecke algebra representation. 
\end{proof}

Let us now turn our attention to defining generalized Schur algebras. We have already defined the Schur algebra of type $B$ in equation~\eqref{def:schuralgebraB}. Let $V, W$ be degree $e$ representations of $\Uq$. For every non-negative integer $d$ we define
\begin{equation}\label{defeSchuralgebra}
S_{Q,q}^B(V,W;d):= \Hom_{\mathcal{H}_{Q,q}^B(d,e)}(V^{\otimes d}, W^{\otimes d}). 
\end{equation}

In particular, we denote by $S_{Q,q}^B(n,m;d,e)$ the space $\Hom_{\mathcal{H}_{Q,q}^B(d,e)}((V_n^{\otimes e})^{\otimes d}, (V_m^{\otimes e})^{\otimes d})$ and let $S_{Q,q}^B(n;d,e)=S_{Q,q}^B(n,n;d,e)$. A relation between different Schur algebras and Hecke algebras is displayed in Figure~\ref{matingdragonflies}. The inclusions on the Hecke algebra side follow by definition, while the surjections on the Schur algebra side follow from the inclusions on the Hecke algebra side. 

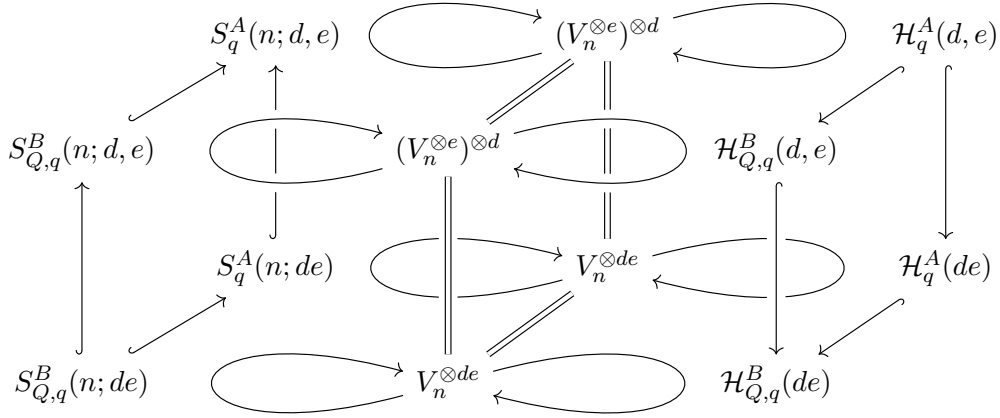
\begin{figure}
\begin{tikzcd}[row sep=2em, column sep=1em]
& S_q^A(n;d,e) \arrow[from=dl,hookrightarrow]  \arrow[from=dd,hookrightarrow]  &  &\arrow[loop left,distance=90]  (V_n^{\otimes e})^{\otimes d} \arrow[dd,equal] \arrow[loop right,distance=90] & &  \mathcal{H}_q^A(d,e) \arrow[dl,hook] \arrow[dd,hook] \\
S_{Q,q}^B(n;d,e) \arrow[from=dd,hookrightarrow] & &  \arrow[loop left,distance=90,crossing over] (V_n^{\otimes e})^{\otimes d} \arrow[loop right,distance=90,crossing over] \arrow[ur,equal,pos=0.35,crossing over]&  & \mathcal{H}_{Q,q}^B(d,e)   \\
& S_{q}^A(n;de) \arrow[from=dl, hookrightarrow]  &  & \arrow[loop left,distance=100]  V_n^{\otimes de} \arrow[loop right,,distance=100]& & \mathcal{H}_q^A(de) \arrow[dl,hook] \\
S_{Q,q}^B(n;de) & & \arrow[loop left,distance=100]   V_n^{\otimes de} \arrow[loop right,distance=100]  \arrow[uu,pos=0.5,equal,crossing over] \arrow[ur,equal,crossing over] & & \mathcal{H}_{Q,q}^B(de) \arrow[from=uu, crossing over,hook]\\
\end{tikzcd}\caption{On each row of the diagram above we have a commuting double action on the space $V_n^{\otimes de}$. A double centralizer property is satisfied for the double action on the bottom two rows for $Q,q$ generic. A question is whether the double action on the top two rows also satisfy a double centralizer property.}
\label{matingdragonflies}
\end{figure}

\section{Two-parameter quantum polynomial functors}\label{sec:quantumpolfunctors}

\subsection{Representations of categories}

Fix a field $k$. Let $\Lambda$ be a $k$-linear category.
A representation of $\Lambda$ is a $k$-linear functor $\Lambda \to \mathcal{V}$, where $\mathcal V$ is the category of finite dimensional $k$-vector spaces.

Let $\operatorname{mod}_{\Lambda}$ be the category of representations of $\Lambda$, where the morphism spaces are given by the natural transformations.

The following lemma and proposition are standard in homological algebra.

\begin{lem}\label{lemma:equivendring}
If $\Lambda$ consists of a single object $*$, then we have $\operatorname{mod}_\Lambda \cong \End_\Lambda(*)$-$\operatorname{mod} $.
\end{lem}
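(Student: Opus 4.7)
The plan is to construct the equivalence explicitly in both directions and check it is functorial. The key observation is that a $k$-linear functor out of a single-object $k$-linear category is, by definition, exactly the data of a vector space together with a $k$-algebra homomorphism from $\End_\Lambda(*)$ to its endomorphism algebra.

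First I would define a functor $\Phi: \operatorname{mod}_\Lambda \to \End_\Lambda(*)\text{-}\operatorname{mod}$ by sending a representation $F: \Lambda \to \mathcal V$ to the vector space $F(*)$, equipped with the left $\End_\Lambda(*)$-action given by $f \cdot v := F(f)(v)$ for $f \in \End_\Lambda(*)$ and $v \in F(*)$; this is a module because $F$ is a $k$-linear functor (preserving composition and identity). A natural transformation $\eta: F \to G$ consists of a single linear map $\eta_*: F(*) \to G(*)$, and the naturality square for $f \in \End_\Lambda(*)$ is precisely the condition that $\eta_*$ is $\End_\Lambda(*)$-linear; so $\Phi(\eta) := \eta_*$ is well-defined on morphisms.

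Next I would construct a quasi-inverse $\Psi: \End_\Lambda(*)\text{-}\operatorname{mod} \to \operatorname{mod}_\Lambda$. Given a module $M$, define $\Psi(M)$ to be the functor sending the unique object $*$ to $M$ and each morphism $f \in \End_\Lambda(*)$ to the linear map $m \mapsto f \cdot m$; the module axioms ensure that $\Psi(M)$ is a $k$-linear functor. On morphisms, an $\End_\Lambda(*)$-linear map $\varphi: M \to N$ gives the natural transformation with component $\varphi$ at $*$, naturality again being exactly $\End_\Lambda(*)$-linearity.

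Finally I would verify $\Phi \Psi = \id$ and $\Psi \Phi = \id$, both of which are immediate from the constructions — there are no nontrivial coherences to check since $\Lambda$ has only one object, so the standard Yoneda-style verifications collapse to identities. There is no real obstacle here; the statement is essentially an unpacking of definitions, and I would mostly be careful to keep the $k$-linearity on both sides straight so that $\Phi$ and $\Psi$ land in the correct categories (finite-dimensional vector spaces versus arbitrary modules, depending on whether $\operatorname{mod}$ carries an implicit finiteness condition as set by the convention on $\mathcal V$).
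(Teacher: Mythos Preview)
Your proposal is correct and is exactly the standard unpacking-of-definitions argument one would expect. The paper itself does not give a proof of this lemma at all; it simply declares the statement (together with the following proposition) to be ``standard in homological algebra'' and moves on, so there is nothing further to compare.
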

 Therefore we can think of $\operatorname{mod}_\Lambda$ as a generalization of the module category of an algebra.

\begin{defn}\label{generate}

A full subcategory $\Gamma$ of $\Lambda$ is said to {\it generate} $\Lambda$ if the additive Karoubi envelope of $\Gamma$ contains $\Lambda$. If $\Gamma$ consists of a single object $V$, we also say $V$ generates $\Lambda$.

\end{defn}

\begin{prop}\label{prop:representabilitygeneral}

If $\Gamma$ generates $\Lambda$, then the restriction functor $\operatorname{mod}_\Lambda\to \operatorname{mod}_\Gamma$ is an equivalence.

\end{prop}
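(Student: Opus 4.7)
The plan is to construct an explicit quasi-inverse $E \colon \operatorname{mod}_\Gamma \to \operatorname{mod}_\Lambda$ to the restriction functor, exploiting that the target category $\mathcal V$ of finite dimensional $k$-vector spaces is additive and idempotent-split. By hypothesis, every object $X \in \Lambda$ is isomorphic to the image of some idempotent $e_X \in \operatorname{End}_\Lambda(Y)$ on a finite biproduct $Y = Y_1 \oplus \cdots \oplus Y_r$ of objects from $\Gamma$, and every morphism in $\Lambda$ lifts (non-uniquely) to a morphism between such $Y$'s intertwining the chosen idempotents.

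Given $F \in \operatorname{mod}_\Gamma$, I would extend it in two steps. First, extend $F$ to the additive hull of $\Gamma$ inside $\Lambda$ by the rule $Y_1 \oplus \cdots \oplus Y_r \mapsto F(Y_1) \oplus \cdots \oplus F(Y_r)$, with morphisms acting by their matrix entries. Second, for each $X \in \Lambda$ fix splitting data $(Y, e_X)$ as above and set $E(F)(X) := \operatorname{im}(F(e_X))$; this image exists in $\mathcal V$ since $F(e_X)$ is an idempotent endomorphism of $F(Y)$ and idempotents split canonically in $\mathcal V$. A morphism $f \colon X \to X'$ in $\Lambda$ lifts to $\tilde f \colon Y \to Y'$ with $e_{X'}\tilde f e_X = \tilde f$, inducing a map $\operatorname{im}(F(e_X)) \to \operatorname{im}(F(e_{X'}))$ that is independent of the lift.

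It then remains to verify that $E(F)$ is a $k$-linear functor independent (up to canonical isomorphism) of the splitting data, and to exhibit natural isomorphisms $E \circ \operatorname{Res} \cong \operatorname{id}$ and $\operatorname{Res} \circ E \cong \operatorname{id}$. The second isomorphism is immediate on objects of $\Gamma$, since one may take $e_Y = \operatorname{id}_Y$, giving $E(F)(Y) = F(Y)$. For the first, any $G \in \operatorname{mod}_\Lambda$ is $k$-linear and therefore preserves biproducts and idempotent splittings in $\mathcal V$, so $G(X) \cong \operatorname{im}(G(e_X)) = E(\operatorname{Res} G)(X)$ naturally in $X$. The main obstacle is not conceptual but bookkeeping — checking naturality uniformly over $\Lambda$ and independently of the splittings chosen — and amounts to the standard universal property of the additive Karoubi completion applied to functors valued in the idempotent-complete additive category $\mathcal V$.
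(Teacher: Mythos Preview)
Your argument is correct and is exactly the standard construction the paper has in mind: the authors do not give a proof of this proposition, stating only that it is ``standard in homological algebra.'' Constructing the quasi-inverse by extending $F$ additively and then to idempotent images in $\mathcal V$ is precisely how one proves this, and your verification of the two natural isomorphisms is the right one.

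One minor point of notation: in the paper's applications (e.g., $\Lambda=\mathcal C^B_d$), the category $\Lambda$ itself need not contain the biproduct $Y=Y_1\oplus\cdots\oplus Y_r$; the direct sum and the idempotent $e_X$ live in the additive Karoubi envelope of $\Lambda$ (equivalently, in an ambient additive idempotent-complete category such as $\HB$-mod). So ``$e_X\in\End_\Lambda(Y)$'' should be read as an element of the matrix algebra with entries in $\Hom_\Lambda(Y_j,Y_i)=\Hom_\Gamma(Y_j,Y_i)$. This does not affect your argument at all, since you only ever apply $F$ to these matrix entries; it is just worth being explicit that $Y$ is a formal direct sum. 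Also, once you fix the splitting data $(Y,e_X)$ and demand $e_{X'}\tilde f e_X=\tilde f$, the lift $\tilde f$ is actually unique (namely $\tilde f=i_{X'}fp_X$), so ``non-uniquely'' is a harmless overstatement.
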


For any inclusion of full subcategories $\Gamma\subseteq \Gamma'\subseteq\Lambda$, if $\Gamma$ generates $\Lambda$, then $\Gamma'$ generates $\Lambda$
As a consequence, the categories $\operatorname{mod}_\Gamma$, $\operatorname{mod}_{\Gamma'}$, $\operatorname{mod}_\Lambda$ are all equivalent.

In particular, if $V$ generates $\Lambda$, then $\operatorname{mod}_\Lambda$ is equivalent to $\End_\Lambda(V)$-$\operatorname{mod}$, the category of finite dimensional modules over the algebra $\End_\Lambda(V)$.

\begin{ex}\label{exFS}

The category of degree $d$ polynomial functors $\mathcal{P}^d$ can be defined as $\operatorname{mod}_{\Gamma^d\mathcal V}$ where $\Gamma^d\mathcal{V}$ is the category with objects vector spaces $V_n$ of dimension $n$ for any $n\geq 1$ and morphisms 
$\Hom_{\Gamma^d \mathcal{V}} (V_n,V_m):= \Hom_{S_d} (V_n^{\otimes d}, V_m^{\otimes d})$. If $n\geq d$, the object $V_n$ generates $\Gamma^d\mathcal V$. Note that the algebra $\End_{\Gamma^d\mathcal V}(V_n)=\End_{S_d}(V_n^{\otimes d})$ is the Schur algebra $S(n;d)$. It follows that $\mathcal P^d$ is equivalent to $\operatorname{mod} S(n;d)$ for all $n\geq d$. In this example we are dealing with the three categories $\Lambda=S_d$-$\operatorname{mod}\supset \Gamma' =\Gamma^d\mathcal V\supset \Gamma = \{V_n\}$, viewing $\Gamma^d\mathcal{V}$ as a full subcategory of $S_d$-mod consisting of the objects of the form $V_n\td$ for all $n$. 
\end{ex}

In fact, all variations of the category of polynomial functors, including what we present in this work, can be identified with module categories of some interesting algebras by use of Lemma~\ref{lemma:equivendring} and Proposition \ref{prop:representabilitygeneral}. Example \ref{exFS} is a classical result of Friedlander and Suslin~\cite{FS}. The next example is the quantum polynomial functors of Hong and Yacobi \cite{HY}, which provide a quantization of Example \ref{exFS}.

\begin{ex}\label{ex:qpf}
Let us denote by $\mathcal{AP}_q^d$ the category defined as $\operatorname{mod}_{\Gamma_q^d\mathcal V}$, where $\Gamma^d_q\mathcal{V}$ is the category with objects vector spaces $V_n$ of dimension $n$ for any $n\geq 1$ and morphisms 
$\Hom_{\Gamma^d_q \mathcal{V}} (V_n,V_m):= \Hom_{\mathcal{H}^A_q(d)} (V_n^{\otimes d}, V_m^{\otimes d})$ where $\mathcal{H}^A_q(d)$ acts on $V_n^{\otimes d}$ via $R$-matrices as in equation~\eqref{eq:Rmatrixdef}. 
As in the non-quantum case, we have that $\End_{\Gamma_q^d\mathcal V}(V_n)=\End_{\mathcal{H}_q^A(d)}(V_n^{\otimes d})=S^A_q(n;d)$ and $\mathcal{AP}_q^d$ is equivalent to $\operatorname{mod} S_q^A(n;d)$ for all $n\geq d$. We rename $\Gamma^d\mathcal V$ to $\mathcal C^A_d$.
\end{ex}

\subsection{Polynomial functors and type B Hecke algebras}\label{sec:defofpolfunct}

\begin{defn}
The quantum divided power category $\mathcal{C}^B_d$ has objects $V_n$ for $n \geq 1$. The morphisms in this category are 
\[ \Hom_{\mathcal{C}_d^B} (V_n, V_m) := \Hom_{\mathcal{H}_{Q,q}^B(d)}(V_n^{\otimes d}, V_m^{\otimes d}).\] 
\end{defn}
Equivalently, we can define $\mathcal C^B_d$ as the full subcategory of $\mathcal{H}_{Q,q}^B(d)$-mod consisting of the objects $V_n\td$ for all $n$.
\begin{defn}\label{defntypeBpolfct}
We define the category of  type BC polynomial functors as 
\[\mathcal{P}_{Q,q}^d:=\mod_{\mathcal{C}^B_d}.\]
\end{defn}

Note that by definition, every $F\in \mathcal{P}_{Q,q}^d$ induces a linear map 
\[F: \Hom_{\mathcal{C}_d^B}(V_n,V_m) \to \Hom_{k}(F(V_n),F(V_m)).
\]

\begin{prop}
Let $F \in \mathcal{P}_{Q,q}^d$. The space $F(V_n)$ has the structure of a $S_{Q,q}^B(n)$-module.  
\end{prop}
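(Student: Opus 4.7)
The plan is to exploit the functoriality of $F$ applied to the endomorphism algebra of $V_n$ in $\mathcal{C}^B_d$, and to identify that endomorphism algebra with the Schur algebra $S_{Q,q}^B(n;d)$.

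More precisely, I would first observe that by Definition of $\mathcal{C}^B_d$ and by \eqref{def:schuralgebraB} specialized to $m=n$,
\[
\End_{\mathcal{C}^B_d}(V_n)
=\Hom_{\mathcal{C}^B_d}(V_n,V_n)
=\Hom_{\mathcal{H}^B_{Q,q}(d)}(V_n^{\otimes d},V_n^{\otimes d})
=S^B_{Q,q}(n;d),
\]
and that this identification is one of unital $k$-algebras, since composition in $\mathcal{C}^B_d$ is defined to be the usual composition of $\mathcal{H}^B_{Q,q}(d)$-linear maps on $V_n^{\otimes d}$, which is the multiplication of $S^B_{Q,q}(n;d)$.

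Next, since $F:\mathcal{C}^B_d\to \mathcal V$ is a $k$-linear functor, applying $F$ to morphisms gives a $k$-linear map
\[
F_{V_n,V_n}:\End_{\mathcal{C}^B_d}(V_n)\longrightarrow \End_k(F(V_n)).
\]
Functoriality ($F(\mathrm{id}_{V_n})=\mathrm{id}_{F(V_n)}$ and $F(g\circ f)=F(g)\circ F(f)$) shows that this map is a homomorphism of unital $k$-algebras. Composing with the identification above produces a $k$-algebra homomorphism
\[
S^B_{Q,q}(n;d)\longrightarrow \End_k(F(V_n)),
\]
which is by definition an $S^B_{Q,q}(n;d)$-module structure on $F(V_n)$.

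There is really no obstacle in this argument; the statement is a direct consequence of the definitions once one spells out that $\End_{\mathcal{C}^B_d}(V_n)$ is literally the Schur algebra, and that a linear functor turns endomorphism algebras into representations on its values. The only thing worth emphasizing when writing the proof is that $F(V_n)$ is finite dimensional (as the target category $\mathcal V$ consists of finite dimensional vector spaces), so we obtain a genuine finite dimensional module, which fits into the framework of Lemma~\ref{lemma:equivendring} and anticipates the equivalence $\mathcal{P}_{Q,q}^d\simeq\operatorname{mod} S^B_{Q,q}(n;d)$ for $n$ large that is the goal of the subsection.
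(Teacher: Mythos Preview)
Your argument is correct and follows essentially the same approach as the paper: identify $\End_{\mathcal{C}^B_d}(V_n)$ with $S^B_{Q,q}(n;d)$ and use that a $k$-linear functor sends this endomorphism algebra to a unital $k$-algebra map into $\End_k(F(V_n))$. Your write-up is more explicit about the unit and multiplicativity than the paper's one-line version, but the content is the same.
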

\begin{proof}
Given an element $x \in S_{Q,q}^B(n;d) = \Hom_{\mathcal{H}_{Q,q}^B(d)}(V_n^{\otimes d}, V_n^{\otimes d})$, there is a corresponding element $F(x) \in \End(F(V_n))$. Since the functor $F$ is linear, the space $F(V_n)$ has the structure of an $S_{Q,q}^B(n;d)$-module with $x \in S_{Q,q}^B(n;d)$ acting on $F(V_n)$ via $F(x)$.
\end{proof}

From Remark~\ref{coidealschur}, the Schur algebra $S_{Q,q}^B(n;d)$ is a quotient of the coideal $U_{Q,q}^B$ in the generic case. It follows that $F(V_n)$ is endowed with the structure of a $U_{Q,q}^B$-module of degree $d$.

\subsection{Representability}\label{sec:representability}

We now show that the category $\mathcal{P}_{Q,q}^d$ is equivalent, under certain conditions, to the module category over the finite dimensional algebra $S_{Q,q}^B(n;d) = \End_{\mathcal{H}^B_{Q,q}(d)}(V_n^{\otimes d})$. 
This follows from Lemma~\ref{lemma:equivendring} and Proposition \ref{prop:representabilitygeneral} if we prove that the domain category $\mathcal{C}^B_d$ is generated by the object $V_{n}$ in the sense of Definition \ref{generate}.

We split this section into two parts depending on the parity of $n$. In \S\ref{sec:representabilityodd} we show the equivalence between $\mathcal{P}_{Q,q}^d$ and $S_{Q,q}^B(n;d)$-$\operatorname{mod}$ for $n$ \textit{odd}. In \S\ref{sec:representabilityeven} we impose the condition that $Q,q$ are generic and prove the equivalence for all $n$. We explain in Remark~\ref{remgenerationeven} what can go wrong if $n$ is even. 

As a convenient convention for the proof, we say for two objects $V,W\in \Lambda=\mathcal{C}^B_d$ that {\it $V$ generates $W$} if $W$ is a direct summand of a direct sum of $V$. We say that $V$ generates $\Lambda$ if $V$ generates every object in $\Lambda$. This definition is consistent with Definition~\ref{generate}.

\subsubsection{Representability for $n$ odd}\label{sec:representabilityodd}

Let $r$ be a non-negative integer. 
\begin{prop}\label{prop:Vdgeneration}
The object $V_n$ generates $\mathcal{C}_d^B$ if $n=2r+1 \geq 2d$. 
\end{prop}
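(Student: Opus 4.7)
The plan is to show that for every object $V_m$ of $\mathcal{C}_d^B$, the tensor space $V_m^{\otimes d}$ is an $\mathcal{H}_{Q,q}^B(d)$-module direct summand of $(V_n^{\otimes d})^{\oplus N}$ for some $N$. Under the convention recalled just before the statement, this is exactly the assertion that $V_n$ generates $V_m$, and since $V_m$ is an arbitrary object of $\mathcal{C}_d^B$, this will show that $V_n$ generates the whole category.

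First, I would apply the permutation module decomposition \eqref{eq:tensordecompositionheclemodule} to both tensor spaces, writing
\[
V_m^{\otimes d} = \bigoplus_{\mathbf{a} \in \mathbb{I}_m^d/W^B(d)} V(\mathbf{a},m) \qquad \text{and} \qquad V_n^{\otimes d} = \bigoplus_{\mathbf{b} \in \mathbb{I}_n^d/W^B(d)} V(\mathbf{b},n)
\]
as $\mathcal{H}_{Q,q}^B(d)$-modules. This reduces the problem to showing, for each $W^B(d)$-orbit representative $\mathbf{a}$ in $\mathbb{I}_m^d$, that the permutation module $V(\mathbf{a},m)$ is isomorphic to some $V(\mathbf{b},n)$ occurring as a summand of $V_n^{\otimes d}$.

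Next, I would translate the hypothesis $n=2r+1 \geq 2d$ into the condition required by Lemma \ref{permut}. Since $n$ is odd and $n \geq 2d$, in fact $n \geq 2d+1$, so $r \geq d$. Lemma \ref{permut} then applies and produces, for every $\mathbf{a} \in \mathbb{I}_m^d$, some $\mathbf{b} \in \mathbb{I}_n^d$ with $V(\mathbf{a},m) \cong V(\mathbf{b},n)$ as $\mathcal{H}_{Q,q}^B(d)$-modules. Combined with the decomposition of $V_n^{\otimes d}$, this exhibits each summand $V(\mathbf{a},m)$ of $V_m^{\otimes d}$ as a direct summand of $V_n^{\otimes d}$.

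Finally, taking $N$ large enough to supply each required $V(\mathbf{b},n)$ with sufficient multiplicity (e.g.\ $N$ can be chosen as the number of $W^B(d)$-orbits on $\mathbb{I}_m^d$), $V_m^{\otimes d}$ becomes a direct summand of $(V_n^{\otimes d})^{\oplus N}$, which is exactly what is needed. The substantive work lies in Lemma \ref{permut}, already established earlier via the add/remove-zeros trick on compositions; granted that lemma, the proposition is a bookkeeping exercise, with the only subtle point being the observation that oddness of $n$ upgrades $n \geq 2d$ to $r \geq d$.
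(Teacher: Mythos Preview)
Your proof is correct and follows essentially the same approach as the paper: decompose both tensor spaces into permutation modules and invoke Lemma~\ref{permut} (=\,Lemma~\ref{lemma:generationodd}) to match each summand of $V_m^{\otimes d}$ with one of $V_n^{\otimes d}$. Your explicit remark that oddness of $n$ turns $n\geq 2d$ into $r\geq d$, and your care about multiplicities (the choice of $N$), make the argument slightly more detailed than the paper's version but not different in substance.
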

\begin{proof}
Let $n=2r+1 \geq 2d$. We want to show that $V_n$ generates $V_m$ for all $m$. Note that $V_n\td$ is a direct sum of $\Hb$-modules $V(\mathbf{a},n)$ and $V_m\td$ is a direct sum of modules $V(\mathbf{b},m)$. By Lemma \ref{lemma:generationodd}, for every $V(\mathbf{b},m)$ there is a $V(\mathbf{a},n)$ such that the two spaces are isomorphic as $\Hb$-modules. It follows by definition that $V_n$ generates $V_m$ for all $m$ which implies that $V_n$ generates $\mathcal{C}_d^B$.
\end{proof}

The following result relates the category of two-parameter polynomial functors with the category of modules of the type B Schur algebra. 

\begin{thm}\label{thm:representability}
The category $\mathcal{P}_{Q,q}^d$ is equivalent to the category of finite dimensional modules of the endomorphism algebra $S_{Q,q}^B(n;d)$ where $n=2r+1$ for any $r \geq d$. 
\end{thm}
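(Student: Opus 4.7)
The plan is to deduce the theorem directly from the generation result of Proposition~\ref{prop:Vdgeneration} combined with the general representability machinery of Lemma~\ref{lemma:equivendring} and Proposition~\ref{prop:representabilitygeneral}. The categorical setup of \S3.1 was tailored to this: once we exhibit a single object $V \in \mathcal{C}_d^B$ whose Karoubi additive envelope recovers the whole category, the functor category $\operatorname{mod}_{\mathcal{C}_d^B}$ collapses to modules over $\End_{\mathcal{C}_d^B}(V)$.

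First, I would check that the parity hypothesis is met: the assumption $r \geq d$ gives $n = 2r+1 \geq 2d+1 \geq 2d$, so Proposition~\ref{prop:Vdgeneration} applies and tells us that $V_n$ generates $\mathcal{C}_d^B$ in the sense of Definition~\ref{generate}. Let $\Gamma \subseteq \mathcal{C}_d^B$ be the full subcategory whose unique object is $V_n$. Since the additive Karoubi envelope of $\Gamma$ contains every $V_m$ (each $V_m^{\otimes d}$ is, as an $\mathcal{H}_{Q,q}^B(d)$-module, a direct summand of some $V_n^{\otimes d \oplus N}$ by the proof of Proposition~\ref{prop:Vdgeneration}), $\Gamma$ generates $\mathcal{C}_d^B$.

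Next I would apply Proposition~\ref{prop:representabilitygeneral} with $\Lambda = \mathcal{C}_d^B$ and this $\Gamma$: the restriction functor
\[
\operatorname{mod}_{\mathcal{C}_d^B} \longrightarrow \operatorname{mod}_{\Gamma}
\]
is an equivalence. Then Lemma~\ref{lemma:equivendring} identifies $\operatorname{mod}_\Gamma$ with $\End_\Gamma(V_n)\text{-}\operatorname{mod}$, the category of finite dimensional modules. By the definition of morphisms in $\mathcal{C}_d^B$,
\[
\End_\Gamma(V_n) = \Hom_{\mathcal{C}_d^B}(V_n, V_n) = \Hom_{\mathcal{H}_{Q,q}^B(d)}(V_n^{\otimes d}, V_n^{\otimes d}) = S_{Q,q}^B(n;d),
\]
which produces the equivalence $\mathcal{P}_{Q,q}^d \simeq S_{Q,q}^B(n;d)\text{-}\operatorname{mod}$ claimed in the theorem.

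The main (and only nontrivial) obstacle has already been dealt with in the preceding subsection, namely Proposition~\ref{prop:Vdgeneration}, which in turn rests on the stability Lemma~\ref{permut} for the permutation $\mathcal{H}_{Q,q}^B(d)$-modules $V(\mathbf{a},n)$. Once one has the latter, the remainder of the argument is a formal consequence of the Yoneda/representability setup recalled in \S3.1, so the proof of Theorem~\ref{thm:representability} is essentially a one-line invocation of Proposition~\ref{prop:representabilitygeneral} combined with the identification of the endomorphism ring with the Schur algebra.
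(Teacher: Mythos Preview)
Your proof is correct and follows essentially the same route as the paper: invoke Proposition~\ref{prop:Vdgeneration} to see that $V_{2r+1}$ generates $\mathcal{C}_d^B$, then apply Proposition~\ref{prop:representabilitygeneral} and Lemma~\ref{lemma:equivendring} with $\Gamma=\{V_{2r+1}\}$, and identify $\End_\Gamma(V_{2r+1})=S_{Q,q}^B(2r+1;d)$. Your additional remark that the only nontrivial content lies in Proposition~\ref{prop:Vdgeneration} (via Lemma~\ref{permut}) is accurate.
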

\begin{proof}
Use Proposition \ref{prop:Vdgeneration} to apply Proposition \ref{prop:representabilitygeneral} and Lemma~\ref{lemma:equivendring} with $\Gamma=\{V_{2r+1}\}$ and recall that $S_{Q,q}^{B}(2r+1;d) = \End_{\Gamma} (V_{2r+1})$.
\end{proof}

\begin{cor}\label{cor:moritaeqgeneral}
The Schur algebras $S_{Q,q}^B(m;d)$ and $S_{Q,q}^B(n;d)$ are Morita equivalent if $m,n \geq 2d$ are odd. 
\end{cor}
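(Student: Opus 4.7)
The plan is to deduce the corollary directly from Theorem~\ref{thm:representability} with essentially no further work. The observation is that if $m,n \geq 2d$ are odd, then writing $m = 2r_1+1$ and $n = 2r_2+1$, the odd-and-$\geq 2d$ condition forces $m,n \geq 2d+1$, hence $r_1,r_2 \geq d$. Thus both $m$ and $n$ satisfy the hypothesis of Theorem~\ref{thm:representability}.

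Applying Theorem~\ref{thm:representability} twice, I would conclude that the categories $S_{Q,q}^B(m;d)$-$\operatorname{mod}$ and $S_{Q,q}^B(n;d)$-$\operatorname{mod}$ are each equivalent to $\mathcal{P}_{Q,q}^d$. Composing the first equivalence with the inverse of the second gives a $k$-linear equivalence
\[
S_{Q,q}^B(m;d)\text{-}\operatorname{mod} \;\simeq\; S_{Q,q}^B(n;d)\text{-}\operatorname{mod},
\]
which is the definition of Morita equivalence for finite dimensional algebras.

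There is really no obstacle here, since the bulk of the work has already been done in Proposition~\ref{prop:Vdgeneration} (showing that $V_n$ with $n=2r+1\geq 2d+1$ generates the divided power category $\mathcal C^B_d$ via the permutation module reduction of Lemma~\ref{permut}) and in the general representability result Proposition~\ref{prop:representabilitygeneral}. If one wanted to make the Morita equivalence explicit, the natural progenitor module implementing it is $\Hom_{\mathcal H^B_{Q,q}(d)}(V_m^{\otimes d}, V_n^{\otimes d}) = S^B_{Q,q}(m,n;d)$ (as defined in \eqref{def:schuralgebraB}), viewed as a bimodule for the two Schur algebras, but this identification is not needed for the statement. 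The proof is therefore a one-line corollary of Theorem~\ref{thm:representability}.
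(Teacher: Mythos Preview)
Your proposal is correct and matches the paper's approach: the corollary is stated immediately after Theorem~\ref{thm:representability} with no separate proof, precisely because both module categories are equivalent to $\mathcal{P}_{Q,q}^d$ and hence to each other. Your unpacking of the parity condition to check $r_1,r_2\geq d$ is exactly the (trivial) verification needed.
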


\subsubsection{Representability for $n$ even}\label{sec:representabilityeven}

We now assume $Q,q$ are generic, which implies the Hecke algebra $\HB$ is semisimple.

\begin{lem}\label{lemma:generationeven}
Suppose $\HB$ is semisimple. Then $V_{2m}$ generates $V_{2m-1}$. 
\end{lem}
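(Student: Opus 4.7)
Plan: By the semisimplicity of $\HB$, every finite-dimensional $\HB$-module is a direct sum of the simple modules $M_{\lambda,\mu}$. Consequently, by Definition \ref{generate}, $V_{2m}$ generates $V_{2m-1}$ if and only if every simple $\HB$-summand of $V_{2m-1}\td$ also appears as a summand of $V_{2m}\td$; the multiplicities can then be met by taking a direct sum of sufficiently many copies of $V_{2m}\td$.

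I would then appeal to the Schur-Weyl bimodule decomposition \eqref{eq:SWdecompositionB}: the simple $\HB$-summands of $V_n\td$ are precisely the $M_{\lambda,\mu}$ for which $L_{\lambda,\mu}(n)\neq 0$, indexed by bipartitions with $(\lambda,\mu)\vdash_n d$. The label set for $V_{2m-1}\td$ (requiring $\ell(\lambda)\leq m,\ \ell(\mu)\leq m-1$) is contained in that for $V_{2m}\td$ (requiring $\ell(\lambda)\leq m,\ \ell(\mu)\leq m$), so the lemma reduces to the implication
\[
L_{\lambda,\mu}(2m-1)\neq 0 \ \Longrightarrow\ L_{\lambda,\mu}(2m)\neq 0 \qquad \text{for } \ell(\lambda)\leq m,\ \ell(\mu)\leq m-1.
\]

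The crux is therefore to verify the non-vanishing $L_{\lambda,\mu}(n)\neq 0$ whenever $(\lambda,\mu)\vdash_n d$. To this end, I would use Proposition \ref{e'givesirred}, which identifies $L_{\lambda,\mu}(n)$ with $V_n\td\cdot e'_{\lambda,\mu}$, and exhibit an explicit nonzero image of a tableau-filling basis vector $v_{\mathbf b}\in V_n\td$ under $e'_{\lambda,\mu} = T_{a,b}u_b^- T_{b,a}u_a^+ e^a_\lambda e^b_\mu$. The length constraints are precisely what ensures such a filling exists in $\mathbb I_n^d$: cells of $\lambda$ receive positive entries and cells of $\mu$ receive negative entries from $\mathbb I_n$. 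In the computation, the Type A Young symmetrizer part $e^a_\lambda e^b_\mu$ acts nontrivially on the tableau filling by the standard theory of quantum Young symmetrizers; the factors $u_a^+, u_b^-$ act as projectors onto $K_i$-eigenspaces of positive and negative sign respectively (cf.\ Proposition \ref{eigenvaluesKi}); and the braids $T_{a,b}, T_{b,a}$ are invertible in $\HB$ and therefore preserve non-vanishing.

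The main obstacle I anticipate is making this Young-symmetrizer computation fully explicit, especially keeping track of the interaction between the $K_i$-action and the Type A symmetrizer on the chosen tableau vector. An alternative, more elegant route is to appeal to the classical limit $Q=q=1$, where $L_{\lambda,\mu}(n)$ reduces to the classical $\mathfrak{gl}_p\oplus \mathfrak{gl}_q$-irreducible tensor product $L_\lambda^{(p)}\boxtimes L_\mu^{(q)}$ with $p=\lceil n/2\rceil,\ q=\lfloor n/2 \rfloor$, whose non-vanishing under the length constraints is classical; since the generic quantum case is a flat deformation of the classical one, the dimension $\dim L_{\lambda,\mu}(n)$ is preserved, giving the required non-vanishing for free.
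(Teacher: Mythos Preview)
Your strategy is valid in outline but genuinely different from the paper's, and the step you identify as the ``crux'' is left unproven.

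The paper does not go through the simple $\HB$-modules $M_{\lambda,\mu}$ at all. Instead it works at the level of the permutation modules $V(\mathbf a,2m-1)$ from \S\ref{subsec:permutation}: by semisimplicity it suffices to produce an $\HB$-embedding $V(\mathbf a,2m-1)\hookrightarrow V_{2m}\td$ for each $\mathbf a\in\mathbb I_{2m-1}^d$. The only obstacle is that $\mathbb I_{2m-1}$ contains $0$ while $\mathbb I_{2m}$ does not, so $v_{\mathbf a}$ with some entries equal to $0$ has no direct counterpart. The paper replaces the block $v_0^{\otimes i}$ by an explicit $(Q\inv,q\inv)$-eigenvector for the parabolic $\mathcal H^B_{Q,q}(i)$ built out of $v_{\pm 1/2}$'s, checks that the resulting vector $\bar v_{\mathbf a}\in V_{2m}\td$ has the same Hecke stabilizer as $v_{\mathbf a}$, and concludes that $v_{\mathbf a}\mapsto \bar v_{\mathbf a}$ extends to an injective $\HB$-map.

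Compared to this, your argument hinges on $L_{\lambda,\mu}(2m)\neq 0$ for all $(\lambda,\mu)\vdash_{2m} d$. The paper only asserts this when $n\geq 2d$ (see the sentence after \eqref{eq:SWdecompositionB}) and explicitly allows $L_{\lambda,\mu}(n)=0$ otherwise, so you cannot simply cite it. Your Young-symmetrizer route is plausible but, as you say, tracing $e'_{\lambda,\mu}=T_{a,b}u_b^-T_{b,a}u_a^+e^a_\lambda e^b_\mu$ on a tableau vector is nontrivial; and note that Proposition~\ref{e'givesirred} and the whole of \S\ref{subsec:youngsym} are stated under the stronger generic hypothesis, not merely ``$\HB$ semisimple.'' The flat-deformation argument is morally right but relies on a flatness/semicontinuity statement for $\dim L_{\lambda,\mu}(n)$ that is not established in the paper. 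By contrast, the paper's permutation-module argument is entirely self-contained and uses only the semisimplicity hypothesis.
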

\begin{proof}
It is enough to find a summand in $V_{2m}\td$ which is isomorphic to $V(\mathbf a)=V(\mathbf a, 2m-1 )$ for an arbitrary $\mathbf a\in \mathbb I^d_{2m-1}$. In fact, since $\mathcal{H}_{Q,q}^B(d)$-modules are completely reducible, it is enough to construct an injective map from $V(\mathbf a)$ into $V_{2m}\td$. 
Since $V(\mathbf a)=V(w \mathbf a)$ for $w\in W^B(d)$, we may assume that $0\leq a_1\leq\cdots\leq a_d$. Let $a_{i+1}$ be the first entry greater than zero.

Let $a'_j=a_j+\frac{1}{2}$.
We define 
\begin{equation}\label{eq:l0l1}
\begin{split}
    &\ell_0(w)= \text{the multiplicity of $s_0$ in a reduced expression of }w;\\
    &\ell_1(w)=\ell(w)-\ell_0(w),
\end{split}
\end{equation}
 where $\ell(w)$ is the Coxeter length for $W^B(d)$.
Then define the element 
\[\bar v_{\mathbf{a}}:=\sum_{w\in W^{B}(i)/\operatorname{Stab}_{W^B(i)}(\frac{1}{2},\cdots,\frac{1}{2})} Q^{-\ell_0(w)}q^{-\ell_1(w)}v_{w(\frac{1}{2},\cdots,\frac{1}{2})}\otimes v_{a'_{i+1}}\otimes\cdots\otimes v_{a'_{d}}\] 
in $V_{2m}\td$. Here $v_{(\frac{1}{2},\cdots,\frac{1}{2} )} := v_{\frac{1}{2}} \otimes \cdots \otimes v_{\frac{1}{2}}$, where there are $i$ terms in the tensor product and in $(\frac{1}{2},\cdots,\frac{1}{2})$. The group $W^B(d)$ acts as in equation~\eqref{WeylgroupactionBC}. 

The vector $\bar v_{\mathbf{a}}$ is an eigenvector with eigenvalue $q^{-1}$ for $T_j \in \mathcal{H}_{Q,q}^B(d), 0<j\leq i$ and eigenvalue $Q^{-1}$ for $T_0$, just like $v_\mathbf{a} = v_{(0,\cdots,0)} \otimes v_{a_{i+1}} \otimes \cdots \otimes v_{a_d}$.
Therefore the element $\bar v_{\mathbf{a}}$ has the same stabilizer in $\mathcal{H}_{Q,q}^B(d)$ as $v_\mathbf{a}$ and the assignment $v_{\mathbf{a}}\mapsto \bar v_{\mathbf{a}}$ induces a well-defined $\mathcal{H}^B_{Q,q}(d)$-map $V(\mathbf a)\to V_{2m}\td$ which is injective.
\end{proof}

\begin{lem}\label{2dgenerates2d+1}
Suppose $\HB$ is semisimple. Then $V_{2d}$ generates $V_{2d+1}$. 
\end{lem}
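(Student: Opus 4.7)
The plan is to reduce this to Lemma \ref{lemma:generationeven} by going through the intermediate space $V_{2d+2}$. The observation is that the relation ``$V$ generates $W$'' (i.e., $W$ is a direct summand of a direct sum of copies of $V$) is transitive, so it suffices to prove the following two claims:
\begin{enumerate}
    \item[(i)] $V_{2d+2}$ generates $V_{2d+1}$;
    \item[(ii)] $V_{2d}$ generates $V_{2d+2}$.
\end{enumerate}
Claim (i) is immediate from Lemma \ref{lemma:generationeven} applied to $m=d+1$; note that for $\mathbf{a}\in\mathbb I_{2d+1}^d$ sorted with $0\leq a_1\leq\cdots\leq a_d$, the shifted values $a'_j=a_j+\frac{1}{2}\leq d+\frac{1}{2}$ all lie in $\mathbb I_{2d+2}$, so the construction of $\bar v_{\mathbf a}$ in that proof goes through without modification.

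For claim (ii), the strategy is a pigeonhole argument on permutation modules. First I would use the decomposition \eqref{eq:tensordecompositionheclemodule} to write
\[V_{2d+2}^{\otimes d}=\bigoplus_{\theta} V_\theta^{\oplus m_\theta}\]
as an $\HB$-module, where $\theta=(\theta_{-(2d+1)/2},\dots,\theta_{(2d+1)/2})$ runs over compositions of $d$ indexed by $\mathbb I_{2d+2}$. For each such $\theta$, the number of positive half-integers in $\mathbb I_{2d+2}$ is exactly $d+1$, so the inequality
\[\sum_{j>0}(\theta_j+\theta_{-j})=d<d+1\]
forces the existence of some $j_0>0$ with $\theta_{j_0}=\theta_{-j_0}=0$. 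Then Lemma \ref{lem:permutationmodulesiso} (applied in the reverse direction of adding zeros in pairs at $j_0$) provides an $\HB$-module isomorphism $V_\theta\cong V_{\theta'}$, where $\theta'$ is a composition of $d$ indexed by $\mathbb I_{2d}$. By construction $V_{\theta'}$ is a direct summand of $V_{2d}^{\otimes d}$.

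Combining these observations: each $V_\theta$ appearing in $V_{2d+2}^{\otimes d}$ is isomorphic to a summand of $V_{2d}^{\otimes d}$, and by the semisimplicity of $\HB$ we can assemble a finite direct sum of copies of $V_{2d}^{\otimes d}$ in which $V_{2d+2}^{\otimes d}$ appears as a direct summand. This establishes (ii), and combined with (i) it yields that $V_{2d}$ generates $V_{2d+1}$. Since the whole argument rests on a clean counting observation plus an immediate citation, I do not anticipate a serious obstacle; the only minor bookkeeping is to accommodate multiplicities $m_\theta$ by taking $N:=\max_\theta m_\theta$ copies of $V_{2d}^{\otimes d}$.
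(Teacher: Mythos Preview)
Your proof is correct. The two steps are sound: (i) is a direct citation of Lemma~\ref{lemma:generationeven} with $m=d+1$, and (ii) is a clean pigeonhole on the permutation-module decomposition~\eqref{eq:tensordecompositionheclemodule} together with Lemma~\ref{lem:permutationmodulesiso}; note that step~(ii) does not even require semisimplicity, since the decomposition into $V_\theta$'s and the isomorphism $V_\theta\cong V_{\theta'}$ hold unconditionally. One small point: $N=\max_\theta m_\theta$ may not be enough if several distinct $\theta$'s collapse to the same $\theta'$, but any sufficiently large $N$ (e.g.\ $N=\sum_\theta m_\theta$) works, and the precise value is irrelevant.

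The paper's own proof is only the sentence ``uses the same arguments as in the proof of Lemma~\ref{lemma:generationeven}'', which indicates a direct adaptation of the explicit eigenvector construction $v_{\mathbf a}\mapsto \bar v_{\mathbf a}$ to produce an injection $V(\mathbf a,2d+1)\hookrightarrow V_{2d}^{\otimes d}$. Your route is genuinely different: rather than redoing that construction, you factor through $V_{2d+2}$ and reuse Lemma~\ref{lemma:generationeven} verbatim for one step and the permutation-module machinery of \S\ref{subsec:permutation} for the other. This buys you a proof that requires no new computations, at the cost of an extra intermediate object. The paper's approach is more direct but leaves the reader to work out the modification needed when both $0$ and $d$ appear among the $|a_j|$ (where the naive shift $a_j\mapsto a_j\pm\tfrac12$ can overflow $\mathbb I_{2d}$ or collide with the $v_{\pm 1/2}$ used for the zero entries).
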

\begin{proof}
The proof uses the same arguments as in the proof of Lemma~\ref{lemma:generationeven}. 
We note it does not hold in general that $V_{2m}$ generates $V_{2m+1}$.   
\end{proof}

\begin{thm}\label{thm:representabilityss}
Let $Q,q$ be generic. The category $\mathcal{P}_{Q,q}^d$ is equivalent to the category of finite dimensional modules of the endomorphism algebra $S_{Q,q}^B(n;d)$ where $n \geq 2d$. 
\end{thm}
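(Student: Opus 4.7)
The plan is to reduce to the representability machinery of Proposition \ref{prop:representabilitygeneral} and Lemma \ref{lemma:equivendring}, exactly as in the proof of Theorem \ref{thm:representability}. That reduction requires showing that $V_n$ generates $\mathcal C^B_d$ in the sense of Definition \ref{generate}, for every $n\geq 2d$. The odd case is already handled by Proposition \ref{prop:Vdgeneration}, so the work is entirely in treating even $n\geq 2d$, for which I intend to combine Lemma \ref{lemma:generationeven} and Lemma \ref{2dgenerates2d+1} with the known odd case.

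More precisely, I would argue as follows. First assume $n=2m$ with $m>d$. Then $n-1=2m-1\geq 2d+1\geq 2d$, so by Proposition \ref{prop:Vdgeneration} the object $V_{n-1}$ already generates every $V_k$ in $\mathcal C^B_d$. By Lemma \ref{lemma:generationeven} (using the semisimplicity of $\HB$, which holds because $Q,q$ are generic), $V_n=V_{2m}$ generates $V_{2m-1}=V_{n-1}$. Since ``generates'' is transitive (a summand of a direct sum of summands is itself a summand of a direct sum), $V_n$ generates every $V_k$, hence generates $\mathcal C^B_d$. In the remaining boundary case $n=2d$, the previous step is not directly available because $2d-1<2d$, so instead I use Lemma \ref{2dgenerates2d+1}: $V_{2d}$ generates $V_{2d+1}$, and $V_{2d+1}$ in turn generates $\mathcal C^B_d$ by Proposition \ref{prop:Vdgeneration}. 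Transitivity again gives that $V_{2d}$ generates $\mathcal C^B_d$.

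Having established generation, I apply Proposition \ref{prop:representabilitygeneral} to the inclusion $\{V_n\}\subseteq \mathcal C^B_d$ to obtain an equivalence $\mod_{\mathcal C^B_d}\iso \mod_{\{V_n\}}$, and then Lemma \ref{lemma:equivendring} identifies the latter with finite dimensional modules over $\End_{\mathcal C^B_d}(V_n)=\End_{\HB}(V_n^{\otimes d})=S^B_{Q,q}(n;d)$. By Definition \ref{defntypeBpolfct}, the left hand side is $\mathcal P^d_{Q,q}$, which gives the desired equivalence.

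There is essentially no obstacle left: the genuinely new ingredient required in the even case, namely that the permutation module $V(\mathbf a, 2m-1)$ appears as an $\HB$-summand inside $V_{2m}^{\otimes d}$ via the quantum averaging vector $\bar v_{\mathbf a}$, has already been handled in Lemma \ref{lemma:generationeven} and its boundary version in Lemma \ref{2dgenerates2d+1}. The only thing to notice is that one cannot hope to prove directly that $V_{2m}$ generates an arbitrary $V_k$ by such an averaging construction (Remark \ref{remgenerationeven} warns that $V_{2m}$ need not generate $V_{2m+1}$ in general), which is why the argument must route through an odd $n'\geq 2d$ to which Proposition \ref{prop:Vdgeneration} applies.
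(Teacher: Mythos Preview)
Your proposal is correct and follows essentially the same approach as the paper's own proof: reduce to the odd case via Lemma~\ref{lemma:generationeven}, invoke Proposition~\ref{prop:Vdgeneration} for the resulting odd $n-1\geq 2d+1$, and handle the boundary $n=2d$ separately via Lemma~\ref{2dgenerates2d+1}, then finish with Proposition~\ref{prop:representabilitygeneral} and Lemma~\ref{lemma:equivendring}. Your remark about why one must route through an odd index (rather than trying to show $V_{2m}$ generates $V_{2m+1}$ directly) is also in line with the paper's Remark~\ref{remgenerationeven}.
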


\begin{proof}
The Hecke algebra $\HB$ is semisimple because we work with $Q,q$ be generic. The case when $n$ is odd has been proved in greater generality, so we focus on $n=2m+2$. Using Lemma~\ref{lemma:generationeven}, $V_{2m+2}$ generates $V_{2m+1}$, which by Proposition~\ref{prop:Vdgeneration} and transitivity implies that $V_{2m+2}$ generates $\mathcal{C}_d^B$. This argument proves the statement for $n \geq 2d+1$ and Lemma~\ref{2dgenerates2d+1} improves the bound to $n\geq2d$. The rest of the proof is the same as for Theorem~\ref{thm:representability}.   
\end{proof}

\begin{cor}\label{cor:moritaeq}
Let $Q,q$ be generic. The Schur algebras $S_{Q,q}^B(m;d)$ and $S_{Q,q}^B(n;d)$ are Morita equivalent if $m,n \geq 2d$. 
\end{cor}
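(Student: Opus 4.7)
The plan is to derive this as an immediate consequence of Theorem~\ref{thm:representabilityss}, using that the category $\mathcal{P}^d_{Q,q}$ plays the role of a common model for the module categories of all the relevant Schur algebras.

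First, I would invoke Theorem~\ref{thm:representabilityss} twice: once for the integer $m \geq 2d$, giving an equivalence $\mathcal{P}^d_{Q,q}\simeq S^B_{Q,q}(m;d)\text{-}\operatorname{mod}$, and once for $n \geq 2d$, giving an equivalence $\mathcal{P}^d_{Q,q}\simeq S^B_{Q,q}(n;d)\text{-}\operatorname{mod}$. Composing the first with a quasi-inverse of the second then produces a $k$-linear equivalence
\[
S^B_{Q,q}(m;d)\text{-}\operatorname{mod}\;\xrightarrow{\sim}\;S^B_{Q,q}(n;d)\text{-}\operatorname{mod}.
\]
Since the Schur algebras $S^B_{Q,q}(m;d)=\operatorname{End}_{\mathcal H^B_{Q,q}(d)}(V_m^{\otimes d})$ and $S^B_{Q,q}(n;d)=\operatorname{End}_{\mathcal H^B_{Q,q}(d)}(V_n^{\otimes d})$ are finite dimensional $k$-algebras, the existence of a $k$-linear equivalence between their finite-dimensional module categories is, by the classical Morita theorem, precisely the statement that they are Morita equivalent.

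There is essentially no obstacle: the real content has been placed in the preceding representability results (Proposition~\ref{prop:Vdgeneration}, Lemma~\ref{lemma:generationeven}, Lemma~\ref{2dgenerates2d+1}) which show that both $V_m$ and $V_n$ generate the category $\mathcal C^B_d$ under the generic assumption on $Q,q$. Once both objects generate $\mathcal C^B_d$, Proposition~\ref{prop:representabilitygeneral} together with Lemma~\ref{lemma:equivendring} identifies $\mathcal P^d_{Q,q}$ with each endomorphism algebra's module category, and the corollary is a formal consequence. The only mild point worth noting explicitly in the write-up is that the bound $2d$ is sharp for the argument in the sense that it is the hypothesis under which Theorem~\ref{thm:representabilityss} holds, so no additional analysis is needed beyond citing that theorem.
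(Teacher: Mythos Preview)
Your proposal is correct and matches the paper's approach: the corollary is stated in the paper without proof, as an immediate consequence of Theorem~\ref{thm:representabilityss}, and your argument spells out exactly this deduction.
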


\begin{rem}\label{remgenerationeven}
When $Q$ or $q$ is a root of unity (or when char$(k)=2$) Lemma \ref{lemma:generationeven} fails. To exemplify this, take $Q^2=-1$ and $d=1$ in Lemma \ref{lemma:generationeven}. Then $V_1$ is an $\mathcal{H}_{Q,q}^B(1)$-submodule of $V_2$, but it is not a quotient. This is because $K_{Q}: V_2 \to V_2$ is not diagonalizable when $Q^2=-1$. When $q^2=-1$, similar phenomena happen with $R_q$  for $d \geq 2$.
\end{rem}

\subsection{Stability for quantum symmetric pairs and Schur algebras}\label{stability}
Corollary \ref{cor:moritaeq} allows us to state a stability property for the Schur algebra $S_{Q,q}^B(n;d)$ as $n\to \infty$.
This extends to a property of the coideal subalgebra $U^B_{Q,q}$. 

Let us consider $U_{Q,q}^B$ in the $n=2r$ case. The degree $d$ irreducibles of $U_{Q,q}^B(\mathfrak{gl}(2r))$ are indexed by pairs of partitions $(\lambda, \mu)$ such that $|\lambda| + |\mu|=d, l(\lambda) \leq r, l(\lambda) \leq r$. There is a notion of compatibility for degree $d$ polynomial representations of $U_{Q,q}^B(\mathfrak{gl}(2r))$ for different $r$, which allows us to take the limit $r \to \infty$. Corollary \ref{cor:moritaeq} implies that the limit of the polynomial representation theory of degree $d$ as $r\to \infty$ is well defined and that it is equivalent to the representation theory of $S^B_{Q,q}(n;d)$ for any $n\geq 2d$.

Let us be more precise. 
Let $\mathbb{I}_{2\infty} = \mathbb{Z} + \frac{1}{2}$ and let $\mathbb{I}_{2\infty+1} = \mathbb{Z}$ and $V_{2\infty}$ and $V_{2\infty +1}$ be vector spaces with basis indexed by elements in $\mathbb{I}_{2\infty}$ and $\mathbb{I}_{2\infty+1}$, respectively. Define the quantum groups $U_q(\mathfrak{gl}(2\infty))$ and $U_q(\mathfrak{gl}(2\infty+1))$ via generators and relations as in equation \eqref{qgrelations} with $V_{2\infty}$ and $V_{2\infty+1}$ as defining representations, respectively (see for example \cite[Section 7]{ES}). Then we define the coideal subalgebras $U^B_{Q,q}(2\infty), U^B_{Q,q}(2\infty+1)$ by extending the definition in the finite case to the infinite case. There is an obvious extension of the right action of $\mathcal{H}_{Q,q}^B(d)$ on $V_n\td$ in equation \eqref{HeckeactionBC} to when $n$ gets replaced by $2\infty$ or $2\infty+1$, therefore allowing us to define the following Schur algebras:
\begin{equation}
\begin{split}
S_{Q,q}^B(2\infty;d) &:= \End_{\mathcal{H}_{Q,q}^B(d)}(V_{2\infty}^{\otimes d}),\\
S_{Q,q}^B(2\infty+1;d) &:= \End_{\mathcal{H}_{Q,q}^B(d)}(V_{2\infty+1}^{\otimes d}).
\end{split}
\end{equation}

\begin{rem}
The coideal subalgebras $U^B_{Q,q}(2\infty), U^B_{Q,q}(2\infty+1)$ have specialization $Q \to 1$ and $Q\to q$ as in the finite case. These infinite versions are compatible with combinatorics of translation functors and can be categorified in a way that they have categorical actions on representation categories of type BD (see \cite[Section 7]{ES}). 
\end{rem}

We define the $\emph{polynomial}$ representations of $S_{Q,q}^B(2\infty;d)$ and $S_{Q,q}^B(2\infty+1;d)$ as the representations appearing as subquotients of the representations $V_{2\infty}\td$ and $V_{2\infty+1}\td$, respectively.
We can show via essentially the same technique as above that Theorem~\ref{thm:representability} and Corollary \ref{cor:moritaeq} extend to the $2\infty / 2\infty +1$ case:

\begin{prop}\label{prop:moritaequivalenceinfinity}
The category of polynomial representations of the Schur algebras $S^B_{Q,q}(2\infty;d)$ and that of $S^B_{Q,q}(2 \infty +1;d)$ are both  equivalent to the category  $\mathcal{P}_{Q,q}^d$. 
\end{prop}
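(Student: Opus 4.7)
The plan is to mimic the finite-dimensional arguments of Section~\ref{sec:representability}, but now with $V_{2\infty+1}$ (resp.\ $V_{2\infty}$) playing the role previously played by $V_n$ for $n\geq 2d$. First, I would introduce an enlarged quantum divided power category $\widetilde{\mathcal{C}}^B_d$ whose objects are $V_n$ for all $n\in\{1,2,\ldots\}\cup\{2\infty,2\infty+1\}$ and whose morphisms are $\mathcal{H}^B_{Q,q}(d)$-equivariant maps on the $d$-th tensor powers; the action on $V_{2\infty}\td$ and $V_{2\infty+1}\td$ is defined by extension of \eqref{HeckeactionBC}. Under this setup, the polynomial representations of $S^B_{Q,q}(2\infty+1;d)$ (resp.\ $S^B_{Q,q}(2\infty;d)$), being subquotients of $V_{2\infty+1}\td$ (resp.\ $V_{2\infty}\td$), correspond to objects of $\mod_{\{V_{2\infty+1}\}}$ (resp.\ $\mod_{\{V_{2\infty}\}}$) via Lemma~\ref{lemma:equivendring}.

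Next, I would verify two generation claims inside $\widetilde{\mathcal{C}}^B_d$ in the sense of Definition~\ref{generate}. On one hand, for every finite $n$ the inclusion $V_n\hookrightarrow V_{2\infty+1}$ of the span of $\{v_i:i\in\mathbb{I}_n\}$ is a direct summand as an $\mathcal{H}^B_{Q,q}(d)$-module of the tensor powers, so $V_{2\infty+1}$ generates every $V_n$ (and similarly for $V_{2\infty}$ with even $n$). On the other hand, any $\mathbf{a}\in\mathbb{I}_{2\infty+1}^d$ has at most $d$ nonzero entries, so the argument of Lemma~\ref{permut} (extended verbatim by adding or removing zero entries in the composition $\theta(\mathbf a)$) shows that $V(\mathbf{a},2\infty+1)$ is isomorphic as an $\mathcal{H}^B_{Q,q}(d)$-module to $V(\mathbf{b},2d+1)$ for some $\mathbf{b}\in\mathbb{I}_{2d+1}^d$. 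Hence $V_{2\infty+1}\td$ decomposes into a (possibly infinite) direct sum of $\mathcal{H}^B_{Q,q}(d)$-modules, each isomorphic to a summand of $V_{2d+1}\td$. Thus $V_{2d+1}$ generates $V_{2\infty+1}$. For the even case I would invoke Lemma~\ref{lemma:generationeven} (under the generic hypothesis needed there) to obtain the analogous statement for $V_{2\infty}$, with $V_{2d}$ as generator. Transitivity of generation together with Proposition~\ref{prop:Vdgeneration} and Lemma~\ref{2dgenerates2d+1} then shows that the finite subcategory $\mathcal{C}^B_d\subset\widetilde{\mathcal{C}}^B_d$, the single object $\{V_{2\infty+1}\}$ (or $\{V_{2\infty}\}$), and the single object $\{V_{2d+1}\}$ (or $\{V_{2d}\}$) all generate the same full subcategory of $\widetilde{\mathcal{C}}^B_d$.

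Having established this, Proposition~\ref{prop:representabilitygeneral} yields equivalences
\[
\mathcal{P}^d_{Q,q}=\mod_{\mathcal{C}^B_d}\;\simeq\;\mod_{\{V_{2d+1}\}}\;\simeq\;\mod_{\{V_{2\infty+1}\}},
\]
and similarly with $2d,2\infty$ in the generic case, which is precisely the desired equivalence with polynomial representations of $S^B_{Q,q}(2\infty+1;d)$ (resp.\ $S^B_{Q,q}(2\infty;d)$). Theorem~\ref{thm:representability} and Theorem~\ref{thm:representabilityss} supply the identifications of the middle term with $\End(V_{2d+1})$-$\mod$ and $\End(V_{2d})$-$\mod$, which is what makes the chain finite-dimensional even though the outer endomorphism rings $S^B_{Q,q}(2\infty+1;d)$ need not be.

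The main obstacle, in my view, is not any single generation step but rather matching the definitions cleanly: one must be careful that the category of \emph{polynomial} (i.e.\ subquotient) representations of the infinite Schur algebra is actually equivalent to $\mod_{\{V_{2\infty+1}\}}$, since a priori the endomorphism ring $S^B_{Q,q}(2\infty+1;d)$ is infinite-dimensional and its module category is enormous. The content of the generation argument is that, at the level of $\mathcal{H}^B_{Q,q}(d)$-isotypic decomposition, $V_{2\infty+1}\td$ contains the same indecomposable summands as $V_{2d+1}\td$ (only with different multiplicities), so Morita equivalence cuts $S^B_{Q,q}(2\infty+1;d)\mathrm{-mod}^{\mathrm{poly}}$ down to $S^B_{Q,q}(2d+1;d)\mathrm{-mod}$, where the finite-dimensionality condition appearing in the definition of $\mathcal{P}^d_{Q,q}$ becomes automatic.
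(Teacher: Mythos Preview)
Your proposal is correct and follows exactly the approach the paper indicates (``essentially the same technique as above''), only with more detail than the paper itself supplies. The permutation-module stability argument (extending Lemma~\ref{permut} to $\mathbf a\in\mathbb I_{2\infty+1}^d$ or $\mathbb I_{2\infty}^d$), together with Proposition~\ref{prop:representabilitygeneral}, is precisely what is meant.

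One minor imprecision: the literal inclusion $\mathbb I_n\subset \mathbb I_{2\infty+1}=\mathbb Z$ you use to embed $V_n\hookrightarrow V_{2\infty+1}$ only makes sense for $n$ odd, since $\mathbb I_{2r}$ consists of half-integers. This does not matter for the argument, because Proposition~\ref{prop:Vdgeneration} already shows $V_{2d+1}$ generates every $V_n$ regardless of parity, and you only need $V_{2d+1}$ and $V_{2\infty+1}$ to generate each other. Your identification of the genuine subtlety---that $\operatorname{mod}_{\{V_{2\infty+1}\}}$ is a priori all finite-dimensional $S^B_{Q,q}(2\infty+1;d)$-modules rather than just the polynomial ones, and that this is resolved because $V_{2\infty+1}^{\otimes d}$ and $V_{2d+1}^{\otimes d}$ share the same finite set of indecomposable $\mathcal H^B_{Q,q}(d)$-summands up to multiplicity---is exactly the point, and is what makes the infinite-rank statement reduce to the finite one.
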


Define the $\emph{polynomial representation theory}$ of $U^B_{Q,q}(2 \infty)$ and $U^B_{Q,q}(2 \infty+1)$ as a direct sum of the categories 
\begin{equation}
\begin{split}
\mathcal{P}_{Q,q}(2\infty):=&\bigoplus_{d\geq1}\mathcal{P}_{Q,q}^d(2 \infty) = \bigoplus_{d\geq1}S_{Q,q}^{B}(2 \infty;d)\operatorname{-mod}, \\
\mathcal{P}_{Q,q}(2\infty+1):=& \bigoplus_{d\geq1}\mathcal{P}_{Q,q}^d(2 \infty+1) = \bigoplus_{d\geq1}S_{Q,q}^{B}(2 \infty+1;d)\operatorname{-mod}.
\end{split}
\end{equation}
The following theorem follows immediately from Proposition \ref{prop:moritaequivalenceinfinity}.

\begin{thm}\label{thmlimitoddeven}
The categories $\mathcal{P}_{Q,q}(2\infty)$ and $\mathcal{P}_{Q,q}(2\infty+1)$ are equivalent. 
\end{thm}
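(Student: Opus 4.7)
The plan is to deduce this directly from Proposition \ref{prop:moritaequivalenceinfinity} applied degree by degree. By definition, both $\mathcal{P}_{Q,q}(2\infty)$ and $\mathcal{P}_{Q,q}(2\infty+1)$ are the direct sums $\bigoplus_{d\geq 1}\mathcal{P}_{Q,q}^d(2\infty)$ and $\bigoplus_{d\geq 1}\mathcal{P}_{Q,q}^d(2\infty+1)$ respectively, so to exhibit an equivalence it is enough to produce an equivalence of categories in each fixed degree $d$ that is natural enough to assemble into a functor on the direct sum.

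For each fixed $d\geq 1$, Proposition \ref{prop:moritaequivalenceinfinity} supplies equivalences of categories
\[
\mathcal{P}_{Q,q}^d(2\infty) \;\simeq\; \mathcal{P}_{Q,q}^d \;\simeq\; \mathcal{P}_{Q,q}^d(2\infty+1),
\]
both given by restriction along the inclusions of quantum divided power categories (or, equivalently, as Morita equivalences coming from the fact that $V_{2\infty}^{\otimes d}$ and $V_{2\infty+1}^{\otimes d}$ each contain a progenerator for the $\mathcal H_{Q,q}^B(d)$-module category consisting of the summands of $V_n^{\otimes d}$, $n\geq 2d$). Composing one such equivalence with a quasi-inverse of the other produces an equivalence
\[
\Phi_d \colon \mathcal{P}_{Q,q}^d(2\infty) \;\xrightarrow{\sim}\; \mathcal{P}_{Q,q}^d(2\infty+1).
\]

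Finally, I would take $\Phi := \bigoplus_{d\geq 1}\Phi_d$, which is an additive functor between the direct sum categories. Since each $\Phi_d$ is an equivalence and the Hom-spaces in the direct sum decompose as direct sums of Hom-spaces in each degree component, $\Phi$ is fully faithful and essentially surjective, hence an equivalence. There is no genuine obstacle here: all the real content is already packaged in Proposition \ref{prop:moritaequivalenceinfinity}, whose proof in turn rests on the generation results of \S\ref{sec:representability} (Proposition \ref{prop:Vdgeneration} for the odd case, together with Lemmas \ref{lemma:generationeven} and \ref{2dgenerates2d+1} for the even case) extended to $n=2\infty$ and $n=2\infty+1$ by the same permutation-module arguments as in Lemma \ref{lem:permutationmodulesiso} and Lemma \ref{permut}. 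The only point one should double-check when carrying out the plan is that the equivalences $\Phi_d$ preserve the polynomial condition (i.e.\ send subquotients of $V_{2\infty}^{\otimes d}$ to subquotients of $V_{2\infty+1}^{\otimes d}$), which is automatic since on both sides the equivalence with $\mathcal{P}_{Q,q}^d$ is realized by restriction to the common subcategory $\mathcal{C}^B_d$.
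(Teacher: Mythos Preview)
Your proposal is correct and follows the same route as the paper: the theorem is an immediate consequence of Proposition~\ref{prop:moritaequivalenceinfinity}, applied in each degree $d$ and then summed up. The paper simply states that the theorem follows immediately from that proposition, while you spell out the direct-sum assembly, but the content is identical.
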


The theorem implies that the polynomial representation theory of the coideal subalgebras in the $n \to \infty$ limit does not depend on the parity of $n$. Therefore one can replace $\mathcal{P}_{Q,q}(2\infty)$ and $\mathcal{P}_{Q,q}(2\infty+1)$ by $\mathcal{P}_{Q,q}(\infty)$.

\begin{rem}\label{rem:noddevencoideal}
Note that there is a difference between the definition of $U_{Q,q}^B(\mathfrak{gl}(n))$ for odd and for even $n$. On the level of generators \eqref{eq:coidealgen}, when $n-1$ is odd, the coideal has a special generator $t$, while when $n-1$ is even, the generators $e_{\half}, f_\half$ are special. 
When $n=2r$, the coideal subalgebra $U_{Q,q}^B \subset \Uq$ is a quantization of the subalgebra $U(\mathfrak{gl}(r) ) \oplus U(\mathfrak{gl}(r)) \subset U(\mathfrak{gl}(2r))$. When $n=2r+1$, the coideal subalgebra $U_{Q,q}^B \subset \Uq$ is a quantization of the subalgebra $U(\mathfrak{gl}(r) ) \oplus U(\mathfrak{gl}(r+1)) \subset U(\mathfrak{gl}(2r+1))$. This difference persists even in the $n=2\infty$ vs $n=2\infty +1$ case. Therefore it is unclear how to relate the coideals $U^B_{Q,q}(2 \infty)$ and $U^B_{Q,q}(2 \infty+1)$ as algebras. 
\end{rem}

\section{Polynomial functors and braided categories with a cylinder twist}\label{sec:cylindertwist}

\subsection{Actions of monoidal categories}\label{sec:action}

Let $\mathcal{B}$ be a category and let $(\mathcal{A},\otimes,1_\mathcal{A})$ be a monoidal category. Denote by $l_X:1_{\mathcal{A}}\otimes X \to X$ the left unitor. Denote by $a_{X_1,X_2,X_3}:(X_1 \otimes X_2) \otimes X_3\to X_1\otimes (X_2 \otimes X_3)$ the associativity morphism of $\mathcal{A}$.

\begin{defn}\label{def:action}
We say $\mathcal{A}$ acts on $\mathcal{B}$ (from the right) if there is a functor $*:\mathcal{B} \times \mathcal{A} \to \mathcal{B}$ such that 
\begin{enumerate}
\item for morphisms $f_1, f_2$ in $\mathcal{B}$ and morphisms $g_1, g_2$ in $\mathcal{A}$ the equation
\[(f_1 * g_1)(f_2 * g_2) = (f_1 f_2)*(g_1 g_2) \]
holds whenever both sides are defined.
\item There is a natural morphism $\lambda:*(\operatorname{id} \times \otimes) \to *(* \times \operatorname{id})$, i.e., $\lambda_{Y,X_1,X_2}:Y * (X_1\otimes X_2) \to (Y * X_1) * X_2$ such that the following diagram commutes: 

\[\begin{tikzpicture}
  \matrix (m) [matrix of math nodes,row sep=3em,column sep=8em,minimum width=2em]
  {
      Y*((X_1 \otimes X_2) \otimes X_3) & Y*(X_1 \otimes (X_2 \otimes X_3)) \\
      & (Y * X_1) *(X_2 \otimes X_3) \\
     (Y*(X_1 \otimes X_2)) *X_3 & ((Y*X_1)*X_2)*X_3\\};
  \path[-stealth]
    (m-1-1) edge node [left] {$\lambda_{Y,X_1 \otimes X_2,X_3}$} (m-3-1)
    (m-1-1) edge node [above] {$\operatorname{id}_Y*a_{X_1,X_2,X_3}$} (m-1-2)
    (m-1-2) edge node [right] {$\lambda_{Y,X_1, X_2\otimes X_3}$} (m-2-2)
    (m-2-2) edge node [right] {$\lambda_{Y*X_1,X_2,X_3}$} (m-3-2)
    (m-3-1) edge node [below] {$\lambda_{Y,X_1, X_2}*\operatorname{id}_{X_3}$} (m-3-2);
\end{tikzpicture}\]
\item There is a natural isomorphism $\rho_{Y}:Y*1_{\mathcal{A}} \to Y$ such that the following diagram commutes:
\[\begin{tikzpicture}
  \matrix (m) [matrix of math nodes,row sep=3em,column sep=6em,minimum width=2em]
  {
      Y*(1_{\mathcal{A}} \otimes X) & (Y*1_{\mathcal{A}}) *X \\
      Y*X & Y * X\\};
  \path[-stealth]
    (m-1-1) edge node [above] {$\lambda_{Y,1,X}$} (m-1-2)
    (m-1-1) edge node [left] {$\operatorname{id}_{Y}*l_X$} (m-2-1)
    (m-1-2) edge node [right] {$\rho_Y*\operatorname{id}_X$} (m-2-2)
    (m-2-1) edge node [below] {$\operatorname{id}_{Y*X}$} (m-2-2);
\end{tikzpicture}
\]
\end{enumerate} 
\end{defn}

Following \cite{HO}, we call the triple $(\mathcal{B}, \mathcal{A},*)$ an $\emph{action pair}$. We write $(\mathcal{B}, \mathcal{A})$ for $(\mathcal{B}, \mathcal{A},*)$ if it is clear what the action $*$ is.   
 
Consider the category of type A quantum polynomial functors $\mathcal{AP}_q = \bigoplus_d \mathcal{AP}_q^d$ defined in Example~\ref{ex:qpf}. The category $\mathcal{AP}_q$ has a monoidal structure. Given $F \in \mathcal{AP}^d_q$ and $G \in \mathcal{AP}^e_q$, define $F \otimes G \in \mathcal{AP}^{d+e}_q$ as
$ F \otimes G (V_n):= F(V_n) \otimes G(V_n)$ and on the morphisms, $F \otimes G$ is given as the composition 
\begin{equation}\label{compositionA}
\begin{split}
& \Hom_{\mathcal{H}_q^A(d+e)}(V_n^{\otimes d+e}, V_m^{\otimes d+e}) \to \Hom_{\mathcal{H}_q^A(d)\otimes \mathcal{H}_q^A(e)}(V_n^{\otimes d}\otimes V_n^{\otimes e}, V_m^{\otimes d}\otimes V_m^{\otimes e}) \\
&\to \Hom_{\mathcal{H}_q^A(d)}(V_n^{\otimes d},V_m^{\otimes d}) \otimes \Hom_{\mathcal{H}_q^A(e)}(V_n^{\otimes e}, V_m^{\otimes e}) \\
&\to \Hom(F(V_n),F(V_m))\otimes \Hom(G(V_n),G(V_m)) \\ 
& \to \Hom(F\otimes G(V_n), F \otimes G(V_m)).  
\end{split}
\end{equation}

There is also a unit with respect to this monoidal structure. The unit is a degree $0$ polynomial functor, which we denote by $1_{\mathcal{AP}_q}$ and is defined $1_{\mathcal{AP}_q} (V_n):=k$ and on morphisms it maps $f \in \Hom_{\mathcal{H}_q^A(d)}(V_n^{\otimes 0},V_m^{\otimes 0}) \simeq \Hom (k,k)$ identically to $\Hom(k,k)$.  

Given $F \in \mathcal{AP}_q^d, G \in \mathcal{AP}_q^e$, the functoriality of $F, G$ endows the spaces $F(V_n)$ and $G(V_n)$ with actions of the $q$-Schur algebras $S_q(n;d)$ and $S_q(n;e)$, respectively, or equivalently, degree $d$ (respectively, degree $e$) $\Uq$-module structures. 

The category $\mathcal{AP}_q$ is a braided monoidal category with the braiding:
\begin{equation}\label{eqbraiding}
R_{F,G}:F \otimes G \to G \otimes F, 
\end{equation}
where $R_{F,G}(V_n) := R_{F(V_n), G(V_n)}$ is the $R$-matrix defined in \S~\ref{sec:RKe}. 
This is proved in \cite[Theorem 5.2]{HY}.

Recall the category $\mathcal{P}_{Q,q}$ defined in Definition~\ref{defntypeBpolfct}.  
\begin{thm}\label{thm:strictactionpair}
The pair $(\mathcal{P}_{Q,q}, \mathcal{AP}_q)$ is an action pair.
\end{thm}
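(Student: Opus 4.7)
The plan is to construct a right action $*: \mathcal{P}_{Q,q}\times \mathcal{AP}_q\to \mathcal{P}_{Q,q}$ by mimicking the construction of the monoidal tensor product in $\mathcal{AP}_q$ given in equation~\eqref{compositionA}, and then to verify the three conditions of Definition~\ref{def:action}. On objects, for $F\in \mathcal{P}_{Q,q}^d$ and $G\in \mathcal{AP}_q^e$, I set $(F*G)(V_n):=F(V_n)\otimes_k G(V_n)$. The key combinatorial input for morphisms is the parabolic inclusion
\[
\iota:\mathcal{H}^B_{Q,q}(d)\otimes \mathcal{H}^A_q(e)\hookrightarrow \mathcal{H}^B_{Q,q}(d+e),\qquad T_i\otimes 1\mapsto T_i,\ 1\otimes T_j\mapsto T_{d+j},
\]
which is well-defined because $T_{d-1}$ and $T_{d+1}$ commute in $\mathcal{H}^B_{Q,q}(d+e)$. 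Under $\iota$, the space $V_n^{\otimes(d+e)}$ splits as $V_n^{\otimes d}\otimes V_n^{\otimes e}$ in the category of $(\mathcal{H}^B_{Q,q}(d)\otimes \mathcal{H}^A_q(e))$-modules, with $T_0,\ldots,T_{d-1}$ acting on the first $d$ tensor slots and $T_{d+1},\ldots,T_{d+e-1}$ acting on the last $e$ slots.

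Given $\phi\in \Hom_{\mathcal{H}^B_{Q,q}(d+e)}(V_n^{\otimes(d+e)},V_m^{\otimes(d+e)})$, I define $(F*G)(\phi)$ as the composition
\begin{align*}
&\Hom_{\mathcal{H}^B_{Q,q}(d+e)}(V_n^{\otimes(d+e)},V_m^{\otimes(d+e)}) \xrightarrow{\iota^*} \Hom_{\mathcal{H}^B_{Q,q}(d)\otimes \mathcal{H}^A_q(e)}(V_n^{\otimes d}\otimes V_n^{\otimes e},V_m^{\otimes d}\otimes V_m^{\otimes e})\\
&\to \Hom_{\mathcal{H}^B_{Q,q}(d)}(V_n^{\otimes d},V_m^{\otimes d})\otimes \Hom_{\mathcal{H}^A_q(e)}(V_n^{\otimes e},V_m^{\otimes e})\\
&\xrightarrow{F\otimes G} \Hom(F(V_n),F(V_m))\otimes \Hom(G(V_n),G(V_m))\\
&\to \Hom(F(V_n)\otimes G(V_n),F(V_m)\otimes G(V_m)).
\end{align*}
The crucial second arrow is well-defined: in the generic semisimple case it is inverse to the canonical isomorphism of Hom spaces arising from Schur--Weyl duality. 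In general, one argues via the Schur algebra $S^B_{Q,q}(n;d+e)\subseteq \End_k(V_n^{\otimes d}\otimes V_n^{\otimes e})$, using that $F(V_n)\otimes G(V_n)$ inherits an action of $S^B_{Q,q}(n;d+e)$ through the coideal comultiplication $\Delta:U^B_{Q,q}\to U^B_{Q,q}\otimes \Uq$ together with the surjection $U^B_{Q,q}\twoheadrightarrow S^B_{Q,q}(n;d+e)$ recorded in Remark~\ref{coidealschur}.

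It remains to verify the three axioms. Functoriality under composition (Condition 1) follows because each arrow in the display above is a morphism of bimodules and $\phi\mapsto (F*G)(\phi)$ respects composition by the functoriality of $F$, $G$ and of tensor product. For the unit (Condition 3), the unit of $\mathcal{AP}_q$ is the constant functor $1_{\mathcal{AP}_q}(V_n)=k$, and $\rho_F:F*1_{\mathcal{AP}_q}\iso F$ is the obvious isomorphism $F(V_n)\otimes k\cong F(V_n)$, with the triangle diagram trivial. For the associator $\lambda_{F,G,H}:F*(G\otimes H)\to (F*G)*H$ (Condition 2), I take the vector space associativity $F(V_n)\otimes (G(V_n)\otimes H(V_n))\iso (F(V_n)\otimes G(V_n))\otimes H(V_n)$; the pentagon axiom then reduces to the transitivity of the parabolic inclusions $\mathcal{H}^B_{Q,q}(d)\otimes \mathcal{H}^A_q(e)\otimes \mathcal{H}^A_q(f)\hookrightarrow \mathcal{H}^B_{Q,q}(d+e+f)$, which is automatic.

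The main obstacle is the second arrow of the displayed composition, namely identifying a $(\mathcal{H}^B_{Q,q}(d)\otimes \mathcal{H}^A_q(e))$-equivariant Hom space with a tensor product of Hom spaces. This is clean in the semisimple (generic) case by Schur--Weyl duality, and more delicate outside of it, where I would instead package everything at the level of Schur-algebra modules using Theorem~\ref{thm:representability} and transport the construction through the equivalence $\mathcal{P}_{Q,q}^{d+e}\simeq S^B_{Q,q}(n;d+e)\operatorname{-mod}$.
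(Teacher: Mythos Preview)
Your construction is the same as the paper's: define $(F*G)(V_n)=F(V_n)\otimes G(V_n)$ and on morphisms use the chain of maps in \eqref{compositionB}, then check the axioms of Definition~\ref{def:action}. The paper treats the verification as routine and only spells out axiom (1).

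Your one substantive worry, however, is misplaced and your proposed workaround would weaken the result. The ``crucial second arrow''
\[
\Hom_{\mathcal{H}^B_{Q,q}(d)\otimes \mathcal{H}^A_q(e)}(V_n^{\otimes d}\otimes V_n^{\otimes e},\,V_m^{\otimes d}\otimes V_m^{\otimes e})
\;\longrightarrow\;
\Hom_{\mathcal{H}^B_{Q,q}(d)}(V_n^{\otimes d},V_m^{\otimes d})\otimes \Hom_{\mathcal{H}^A_q(e)}(V_n^{\otimes e},V_m^{\otimes e})
\]
is an isomorphism for arbitrary $Q,q\in k^\times$, with no semisimplicity needed. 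This is elementary linear algebra: for finite-dimensional right modules $M_i,N_i$ over $k$-algebras $H_i$, the canonical map
\[
\Hom_{H_1}(M_1,N_1)\otimes_k \Hom_{H_2}(M_2,N_2)\;\longrightarrow\;\Hom_{H_1\otimes H_2}(M_1\otimes M_2,\,N_1\otimes N_2)
\]
is bijective. Injectivity is clear; for surjectivity, write $\phi=\sum_\alpha f_\alpha\otimes g_\alpha$ with the $g_\alpha$ linearly independent in $\Hom_k(M_2,N_2)$, pick dual functionals, and apply the equivariance under $h_1\otimes 1$ to conclude each $f_\alpha\in\Hom_{H_1}(M_1,N_1)$; then repeat with the roles reversed. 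So there is no obstacle here, and no need to invoke Schur--Weyl duality.

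Your fallback via the coideal comultiplication and Remark~\ref{coidealschur} should be dropped: that remark (and the very definition of $U^B_{Q,q}$ in \S\ref{sec:defcoideal}) is only available in the generic case, whereas Theorem~\ref{thm:strictactionpair} is stated and proved for all $Q,q\in k^\times$. Introducing the coideal here would impose an unnecessary restriction precisely in the regime you were trying to cover.
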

\begin{proof}
Let us first define the action of $\mathcal{AP}_q$ on $\mathcal{P}_{Q,q}$. Let $F \in \mathcal{AP}^d_q$ and $G \in \mathcal{P}^e_{Q,q}$. Define $G*F\in \mathcal{P}^{d+e}_{Q,q}$ on objects as $G*F(V_n) := G(V_n) \otimes F(V_n)$ and on morphisms as the composition:
\begin{equation}\label{compositionB}
\begin{split}
& \Hom_{\mathcal{H}_q^B(d+e)}(V_n^{\otimes d+e}, V_m^{\otimes d+e}) \to \Hom_{\mathcal{H}_q^B(d)\otimes \mathcal{H}_q^A(e)}(V_n^{\otimes d}\otimes V_n^{\otimes e}, V_m^{\otimes d}\otimes V_m^{\otimes e}) \\
&\to \Hom_{\mathcal{H}_q^B(d)}(V_n^{\otimes d},V_m^{\otimes d}) \otimes \Hom_{\mathcal{H}_q^A(e)}(V_n^{\otimes e}, V_m^{\otimes e}) \\
& \to \Hom(G(V_n),G(V_m))\otimes \Hom(F(V_n),F(V_m)) \\ 
& \to \Hom(G* F(V_n), G*F(V_m)).  
\end{split}
\end{equation}

Since we have defined $G*F(V_n) := G(V_n) \otimes F(V_n)$, the natural morphisms $\lambda_{Y,X_1,X_2}:Y * (X_1\otimes X_2) \to (Y * X_1) * X_2$ and $\rho_{Y}:Y*1_{\mathcal{A}} \to Y$ are the identity maps on objects.

Using the action defined above, the proof consists only of routine verification of the axioms.

For example, let us prove the first property in Definition~\ref{def:action}. Given $f:F_1 \to F_2$ and $g:G_1 \to G_2$, denote by $f_{V_n}: F_1(V_n) \to F_2(V_n)$ and $g_{V_n}: G_1(V_n) \to G_2(V_n)$ their values on objects, respectively. Then $f * g : F_1 * G_1 \to F_2 * G_2$ is given on objects by $f*g_{V_n} = f_{V_n} \otimes g_{V_n}$. The first property then becomes equivalent to the equation $((f_1)_{V_n} \otimes (g_1)_{V_n}) ((f_2)_{V_n} \otimes (g_2)_{V_n}) = ( (f_1)_{V_n} (f_2)_{V_n} \otimes (g_1)_{V_n} (g_2)_{V_n} )$ which is a standard property of tensor product. 

We omit the rest of the proofs since they are routine.
\end{proof}

\begin{rem}
The action in Theorem~\ref{thm:strictactionpair} is a right action. This fact is related to the coideal $U_{Q,q}^B$ being a right coideal, i.e. $\Delta(U_{Q,q}^B) \subset U_{Q,q}^B \otimes \Uq$ and to the fact that $T_0\in \HB$ acts on the first (left) component of $V_n\td$. There is a version of the Schur-Weyl duality in Theorem ~\ref{thm:doublecentralizerB} where the Hecke algebra generator $T_0$ acts on the last component of $V_n\td$ (and $T_1$ acts on the last two components of $V_n\td$ etc.) and the corresponding coideal is a $\emph{left}$ coideal. The action pair in Theorem~\ref{thm:strictactionpair} is defined similarly, but it is now a $\emph{left}$ action pair. 
\end{rem}

\begin{rem}
The action in Theorem~\ref{thm:strictactionpair} is bilinear. We can therefore say that $\mathcal P_{Q,q}$ is a (right) module for $\mathcal{AP}_q$.  
\end{rem}

\subsection{Cylinder braided action pairs}\label{sec:cylbraiddd}

In this subsection we show how to build a cylinder braided action pair from the theory of two-parameter quantum polynomial functors. 

\begin{defn}\label{defcylbraidedacpair}
An action pair $(\mathcal{B}, \mathcal{A})$ is said to be $\emph{cylinder braided}$ if:
\begin{enumerate}
\item There exists an object $1\in\mathcal B$ which gives a bijection $\operatorname{Ob}(\mathcal{A}) \to \operatorname{Ob}(\mathcal{B})$ via $X\mapsto 1*X$. 
\item $\mathcal{A}$ is a braided monoidal category with braiding $c$.
\item There exists a natural isomorphism $t :\id_\mathcal{B}\to \id_\mathcal{B}$ such that the following equalities hold:
\[ c_{Y,X}(t_Y \otimes \id_X)c_{X,Y}(t_X \otimes \id_Y) = (t_X \otimes \id_Y)c_{Y,X} (t_Y \otimes \id_X)c_{X,Y} = t_{X \otimes Y}.\]
\end{enumerate}
\end{defn}

The goal of this subsection is to show that the $\mathcal{AP}_q$ action on $\mathcal P_{Q,q}$ produces a cylinder braided action pair.
The module category $\mathcal B$ here consists of the (one-parameter) quantum polynomial functors viewed as two-parameter quantum polynomial functors. We make this more precise:

Recall that $\mathcal{AP}^d_q=\mod_{\mathcal C^A_d}$ and $\mathcal{P}^d_{Q,q}=\mod_{\mathcal C^B_d}$, and that $Ob (\mathcal{C}^B_d) = Ob (\mathcal{C}^A_d)$. The Hecke algebra inclusion $\mathcal{H}_q^A(d) \xhookrightarrow{} \Hb$ implies the inclusion $\Hom_{\mathcal{H}_{Q,q}^B(d)}(V_n\td, V_m\td)  \xhookrightarrow{} \Hom_{\mathcal{H}_q^A(d)}(V_n\td, V_m\td)$ which is the same as the inclusion $\operatorname{Mor}_{\mathcal{C}_d^B}(V_n, V_m) \xhookrightarrow{} \operatorname{Mor}_{\mathcal{C}_d^A}(V_n, V_m)$.
We thus have the restriction functor  \[\res: \mathcal{AP}_q \to \mathcal{P}_{Q,q}.\]
The functor $\res$ is equivalent to the restriction of $S_q^A(n;d)$-modules to $\SB(n;d)$-modules in view of Theorem \ref{thm:representability}. 

Denote by $\res(\mathcal{AP}_q)$ the full subcategory of $\mathcal P_{Q,q}$ whose objects are $\res Ob(\mathcal P_{Q,q})$. We define an action of $\mathcal{AP}_q$ on $\res(\mathcal{AP}_{q})$ similar to the action defined in \S~\ref{sec:action}.
Let $F\in \res(\mathcal{AP}_{q})$ and $G \in \mathcal{AP}^e_{q}$. There is a unique $F' \in \mathcal{AP}_q^d$ such that $F=\res(F')$. Define $F*G\in \res \mathcal{AP}^{d+e}_{q}$ as $\res(F*G)'$, where  $(F*G)'\in \mathcal{AP}_q^{d+e}$ is $(F*G)' := F' \otimes G$. 


Recall the element $c_K=c_K^d=\prod_i K_i\in \HB$. Lemma \ref{cKcentral} implies $c_K \in \Hom_{\HB}(V_n\td, V_m\td)$.

Given an element $F \in \res\mathcal{AP}_{q}$, define $K_F:F\to F$ by
\[K_F(V_n):=  F(c_K):F(V_n) \to F(V_n).\]

\begin{lem}\label{lemnaturaltransfK}
The map $K_F$ is a morphism in the category $\res\mathcal{AP}_{q}$. 
\end{lem}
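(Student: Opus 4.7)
The plan is to verify that $K_F$ defines a natural transformation from the functor $F$ to itself, when both are viewed as $k$-linear functors $\mathcal C^B_d\to\mathcal V$. Since $\res\mathcal{AP}_q$ is a full subcategory of $\mathcal P_{Q,q}=\mod_{\mathcal C^B_d}$, any such natural transformation is automatically a morphism in $\res\mathcal{AP}_q$.

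First I would invoke Lemma \ref{cKcentral}, which says that $c_K$ is central in $\HB$. As an immediate consequence, the right multiplication by $c_K$ on $V_n^{\otimes d}$ commutes with the right $\HB$-action on $V_n^{\otimes d}$ and thereby defines a morphism, which I still denote by $c_K\in \Hom_{\mathcal C^B_d}(V_n,V_n)=\Hom_{\HB}(V_n^{\otimes d},V_n^{\otimes d})$. Thus $K_F(V_n):=F(c_K)$ is a well-defined endomorphism of $F(V_n)$ for every $n$.

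Next, I would fix an arbitrary morphism $f\in \Hom_{\mathcal C^B_d}(V_n,V_m)$, i.e.\ an $\HB$-linear map $V_n^{\otimes d}\to V_m^{\otimes d}$, and check the naturality square $F(f)\circ K_F(V_n)=K_F(V_m)\circ F(f)$. By the functoriality of $F$ this reduces to the identity $f\circ c_K = c_K\circ f$ of morphisms in $\mathcal C^B_d$, where on the left $c_K$ acts on $V_n^{\otimes d}$ and on the right $c_K$ acts on $V_m^{\otimes d}$. This is immediate from the $\HB$-linearity of $f$: for any $v\in V_n^{\otimes d}$ one has $f(v\cdot c_K)=f(v)\cdot c_K$, where $\cdot$ denotes the right action of $\HB$.

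The proof is essentially formal; the substantive input has already been packaged into Lemma \ref{cKcentral}. I do not anticipate a genuine obstacle, only a bookkeeping point: the $\HB$-action on $V_n^{\otimes d}$ is on the right while morphisms in $\mathcal C^B_d$ are $\HB$-linear maps, so it is precisely the centrality of $c_K$ that ensures right multiplication by $c_K$ both lies in $\mathcal C^B_d$ and commutes with every other morphism of $\mathcal C^B_d$.
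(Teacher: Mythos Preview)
Your proof is correct and follows essentially the same approach as the paper's: both verify the naturality square $F(x)\circ F(c_K)=F(c_K)\circ F(x)$ by reducing it to the commutation $x\circ c_K=c_K\circ x$, which holds because $x$ is $\HB$-linear and $c_K\in\HB$. The only cosmetic difference is that you explicitly invoke centrality (Lemma~\ref{cKcentral}) to justify that $c_K$ is itself a morphism in $\mathcal C^B_d$, whereas the paper records this just before stating the lemma.
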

\begin{proof}
Assume $F$ is of degree $d$. To see that $K_F$ is a morphism, we need to show that the following diagram commutes
\[\begin{tikzpicture}
  \matrix (m) [matrix of math nodes,row sep=2em,column sep=6em,minimum width=2em]
  {
      F(V_n) & F(V_n) \\
      F(V_m) & F(V_m)\\};
  \path[-stealth]
    (m-1-1) edge node [above] {$F(c_K)$} (m-1-2)
    (m-1-1) edge node [left] {$F(x)$} (m-2-1)
    (m-1-2) edge node [right] {$F(x)$} (m-2-2)
    (m-2-1) edge node [below] {$F(c_K)$} (m-2-2);
\end{tikzpicture}
\]
for all $x \in \Hom_{\HB}(V_n\td, V_m\td)$. Since $c_K\in \HB$, it commutes with $x$. Thus we have $F(x)F(c_K) = F(x c_K) =F(c_K x) = F(c_K)F(x)$. The statement of the lemma follows. 
\end{proof}

\begin{thm}\label{thmactioncylbraided}
The action pair $(\res(\mathcal{AP}_{q}), \mathcal{AP}_q)$ is a cylinder braided action pair.
\end{thm}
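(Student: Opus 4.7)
The plan is to verify the three items of Definition~\ref{defcylbraidedacpair} for the pair $(\res(\mathcal{AP}_{q}), \mathcal{AP}_q)$.

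For (1), I would take $1 := \res(1_{\mathcal{AP}_q}) \in \res(\mathcal{AP}_q)$. By the unit axiom in $\mathcal{AP}_q$ and the definition of the $*$-action on the restricted subcategory (given right after Lemma~\ref{lemnaturaltransfK}), we get $1 * X \cong \res(X)$ for every $X \in \mathcal{AP}_q$, and this is a bijection on objects essentially by construction of $\res(\mathcal{AP}_q)$. Part (2), that $\mathcal{AP}_q$ is braided monoidal with braiding $c_{F,G} = R_{F,G}$ given by $R$-matrices, is \cite[Theorem 5.2]{HY}.

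For (3) I would set $t_F := K_F$ for $F \in \res(\mathcal{AP}_q)$. Lemma~\ref{lemnaturaltransfK} shows $K_F$ is a morphism $F \to F$, and invertibility follows because each $T_j$ is invertible in $\HB$ (from the quadratic Hecke relations), hence each $K_j$, hence $c_K = \prod_j K_j$ is invertible, so $F(c_K)$ has inverse $F(c_K^{-1})$. Naturality of $t$: given a morphism $\phi: F \to F'$ in $\res(\mathcal{AP}_q)$, we need $F'(c_K)\circ \phi_{V_n} = \phi_{V_n}\circ F(c_K)$ at each $V_n$; this is exactly naturality of $\phi$ applied to the morphism $c_K \in \End_{\mathcal{C}^B_d}(V_n)$ (which is a morphism by Lemma~\ref{cKcentral}).

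The core of the proof is the cylinder braid identity, which I would reduce to a pointwise computation at each object $V_n \in \mathcal{C}^B_d$. The crucial identification is that $t_F(V_n) = F(c_K)$ coincides with the $K$-matrix $K_{F(V_n)}$ of the $\Uq$-module $F(V_n)$ restricted to $U^B_{Q,q}$. This holds when $F$ is the tensor power functor $T^d$ by the paper's remark (right after equation~\eqref{eq:indK}) that $K_{V_n^{\otimes d}}$ acts on $V_n^{\otimes d}$ as $K_d K_{d-1}\cdots K_1 = c_K$; for a general $F \in \res(\mathcal{AP}_q^d)$, the functor $F(V_n)$ is realized (via the Morita framework of Section~\ref{sec:representability}) as a subquotient of $V_n^{\otimes d}$, and by the same remark both $K_{F(V_n)}$ and $F(c_K)$ are obtained by restricting $K_{V_n^{\otimes d}}$, so they agree.

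Granting this identification, at each $V_n$ the composite
\[(t_X \otimes \id_Y)\, c_{Y,X}\, (t_Y \otimes \id_X)\, c_{X,Y}\big|_{V_n} = (K_{X(V_n)} \otimes \id)\, R_{Y(V_n),X(V_n)}\, (K_{Y(V_n)} \otimes \id)\, R_{X(V_n),Y(V_n)}\]
equals $K_{X(V_n)\otimes Y(V_n)}$ by the inductive formula~\eqref{eq:indK}, and this equals $t_{X\otimes Y}(V_n)$. The other equation follows immediately from the reflection equation~\eqref{eqreflection} applied at each $V_n$ to $V = X(V_n)$, $W = Y(V_n)$. The main obstacle is the subquotient identification $F(c_K) = K_{F(V_n)}$; once that is in place, the rest is a direct pointwise application of the reflection equation and the inductive $K$-matrix formula.
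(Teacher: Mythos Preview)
Your proposal is correct and follows essentially the same strategy as the paper's proof. The only presentational difference is in the final step: the paper reduces the cylinder braid identity to the case $F=\otimes^d$, $G=\otimes^e$ and then verifies it as the Hecke algebra identity
\[c_K^{d+e}=T_{e,d}(c_K^e\otimes 1)T_{d,e}(c_K^d\otimes 1)=(c_K^d\otimes 1)T_{e,d}(c_K^e\otimes 1)T_{d,e}\]
in $\mathcal H^B_{Q,q}(d+e)$, whereas you invoke the identification $F(c_K)=K_{F(V_n)}$ (which is exactly the content of Remark~\ref{lemKx}) and then apply the inductive formula~\eqref{eq:indK} and the reflection equation~\eqref{eqreflection} pointwise. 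These are two phrasings of the same computation, since the Hecke identity is precisely what~\eqref{eq:indK} and~\eqref{eqreflection} say for tensor powers, and both approaches rely on the same ``restrict to subquotients'' step to pass from $\otimes^d$ to a general $F$.
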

\begin{proof}
The action in Theorem~\ref{thm:strictactionpair} preserves $\res(\mathcal{AP}_q)$. Thus $( \res(\mathcal{AP}_{q}), \mathcal{AP}_q)$ is an action pair by restriction.

To show that the action pair is cylinder braided, we let $1:=\res k\in \res(\mathcal{AP}_q)$, where $k\in\mathcal{AP}_q$ is the tensor identity (the constant functor) and identify $F\in Ob(\mathcal{AP}_q)$ with $\res F \in Ob(\res \mathcal{AP}_q)$. Take $c_{F,G}$ to be the braiding of $\mathcal{AP}_q$ in \eqref{eqbraiding} and set $t_F = K_F$. 
To prove that $t$ is a natural transformation, let $f \in \operatorname{Mor}_{\mathcal{AP}_q}(F,G)$. This means that
\[ f_{V_n} F(x) = G(x) f_{V_n}\]
for any $x \in \operatorname{Mor}_{\mathcal{C}_d^A}(V_n,V_n)$. Since $K_F(V_n)=F(c_K)$, taking $x=c_K$ gives what we need. 

To show the relation \[ R_{G,F} (K_G \otimes \id_F)R_{F,G} (K_F \otimes \id_G)=
(K_F \otimes \id_G)R_{G,F} (K_G \otimes \id_F)R_{F,G} =
K_{F \otimes G},\]
it is enough to consider the case $F=\otimes^d$ and $G=\otimes^e$ since the morphisms $R, K$ restrict to subobjects. 
Since $R_{\otimes^d,\otimes^e}$ is given by the action of $T_{d,e}$, the above relation is equivalent to the equation 
 \[c_K^{d+e}=T_{e,d}(c_K^e\otimes 1) T_{d,e} (c_K^d\otimes 1)=(c_K^d\otimes 1)T_{e,d}(c_K^e\otimes 1)T_{d,e} \] 
 in $\mathcal{H}^B_{Q,q}(d+e)$, 
 where $c_K^d\otimes 1\in \HB\otimes\mathcal{H}^A_q(e)$ and $c_K^e\otimes 1\in \mathcal{H}^B_{Q,q}(e)\otimes\HB$
 are viewed as elements in $ \mathcal{H}^B_{Q,q}(d+e)$
 via $\HB\otimes\mathcal{H}^A_q(e) \subseteq \mathcal{H}^B_{Q,q}(d+e)$
 and via $\mathcal{H}^B_{Q,q}(e)\otimes\mathcal{H}^A_q(d) \subseteq\mathcal{H}^B_{Q,q}(e+d)=\mathcal{H}^B_{Q,q}(e+d).$ But this is checked by a straightforward computation in the Hecke algebra 
$\mathcal{H}^B_{Q,q}(d+e)$.
\end{proof}

\begin{rem}\label{lemKx}
Let $K_{F(V_n)}$ be the $K$-matrix defined in \S~\ref{sec:RKe}. Then we have 
\[ K_{F(V_n)} = F(c_K). \]
\end{rem}

\begin{rem}\label{rembmc}
Strengthening the idea of a cylinder braided action pair is the notion of a \emph{braided module category} (see~\cite[\S 4.3]{Enriquez} and~\cite[\S~5.1]{Brochier}). 
A cylinder braided action pair ($\mathcal B,\mathcal A$) is equipped with a cylinder twist which can be thought of as a natural map $t_X:1*X\to 1*X$ (via $X=1*X$). A braided module comes equipped with a twist $b_{M,X}:M*X\to M*X$ natural on both $M\in\mathcal{B}, X\in\mathcal{A}$ with axioms that ensure the twist is compatible with the braiding on $\mathcal{A}$. Therefore, for a braided module $(\mathcal{B},b)$ over $\mathcal{A}$ and each $M\in\mathcal{B}$, the action pair $(M*\mathcal{A},\mathcal{A}$) is cylinder braided with $t_{M*X}=b_{M,X}$. 

Our category $\mathcal{P}_{Q,q}$ is a braided module category over $\mathcal{AP}_q$. In the setting of $U_{Q,q}^B$-modules with $Q,q$ generic, Kolb~\cite{KolbBMC} shows that the category of finite dimensional $U_{Q,q}^B$-modules is a braided module category over the category of finite dimensional $\Uq$-modules.
If we restrict to $\res (\mathcal{AP}_q)\subseteq\mathcal{P}_{Q,q}$, we can obtain the twist by letting $b_{Y,X}=c_{X,Y}(t_X \otimes \id_Y)c_{Y,X}$ for $Y\in \res (\mathcal{AP}_q), X\in \mathcal{AP}_q$. 
When $Q,q$ are generic, every object in $\mathcal{P}_{Q,q}$ is a direct summand of an object in $\res (\mathcal{AP}_q)$, so this is enough. 
In the non-generic case, we need to further show that $b_{Y,X}$ restricts to submodules. 
For this, we can work with duals of Schur algebras and essentially build a couniversal $K$-matrix (see \cite[Section 5]{HY} where they use the couniversal $R$-matrix to show that $\mathcal{AP}$ is braided monoidal). In order to streamline the contents of the paper, we skip the proof of this fact. 
\end{rem}

\section{Composition for two-parameter polynomial functors}\label{sec:composition}

Let $d$ be a non-negative integer and $e$ be a positive integer.

\subsection{The category $\apde$}

We now define a category of (type A) quantum polynomial functors $\apde$ where composition is possible. This category is studied in \cite{BuciumasKo1}.
 
Recall the $e$-Schur algebra and the $e$-Hecke algebra defined in Section \ref{sec:RKe}. Let $\mathcal{C}^A_{d,e}$ be the category defined as follows: its objects are finite dimensional $S_{q}^A(n;e)$-modules (or the degree $e$ representation of $U_q(\mathfrak{gl}_n)$)) for all positive $n$. The morphisms are given by 
\[ \operatorname{Mor}(V, W):= \Hom_{\mathcal{H}_q^A(d,e)}(V\td, W\td),  \]
where the $e$-Hecke algebra acts on $V\td$ as in \S \ref{sec:RKe}. 
Define $\apde:=\mod_{\mathcal{C}^A_{d,e}}$.

Then \cite[Theorem 5.2]{BuciumasKo1} shows that there is a composition $\circ_A$ on $\mathcal{AP}^{*,*}_q$. More precisely this means that given $F \in \mathcal{AP}_q^{d_2,d_1e}, G \in \mathcal{AP}_q^{d_1,e}$, then we have $F \circ_A G\in \mathcal{AP}_q^{d_1 d_2,e}$. One can also check that $\circ_A$ is associative.

\subsection{The category $\mathcal{P}_{Q,q}^{d,e}$} \label{subsec:pQqde}

Define the category $\mathcal{C}^B_{d,e}$ as follows: its objects are finite dimensional $S_{Q,q}^B(n;e)$-modules, for all positive $n$. The morphisms are given by 
\[ \operatorname{Mor}(V, W):= \Hom_{\mathcal{H}_{Q,q}^B(d,e)}(V\td, W\td),  \] 
where the action of $\mathcal{H}_{Q,q}^B(d,e)$ on $V\td$ is given in Section \ref{sec:RKe}.  
Define $\mathcal{P}_{Q,q}^{d,e}:=\mod_{\mathcal{C}^B_{d,e}}$.

It is proved in \cite{BuciumasKo1}, assuming $q$ generic, that the category $\mathcal{AP}_q^{d,e}$ is equivalent to the category  $\operatorname{mod} \End_{\mathcal{H}_{q}^{A}(d,e)}((\bigoplus_{i=1}^d V_n^{\otimes e})\td )$ when $n\geq de$.  
One can prove a similar theorem in the type B setting:
\begin{thm}\label{thm:epfrepresentability}
Let $k=\mathbb C$ and $Q,q\in \mathbb C^\times$ generic. If $n \geq 2de$, the category $\mathcal{P}_{Q,q}^{d,e}$ is equivalent to the category of finite dimensional modules of the generalized Schur algebra 
\[S_{Q,q}^B( \bigoplus_{i=1}^d V_n^{\otimes e}; d):= \End_{\mathcal{H}_{Q,q}^B(d,e)}( (\bigoplus_{i=1}^d V_n^{\otimes e})^{\otimes d} ).\]
\end{thm}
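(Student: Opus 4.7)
The plan is to mirror the proof strategy of Theorems~\ref{thm:representability} and~\ref{thm:representabilityss}: apply Lemma~\ref{lemma:equivendring} and Proposition~\ref{prop:representabilitygeneral} with $\Gamma=\{\bigoplus_{i=1}^d V_n^{\otimes e}\}$ and $\Lambda=\mathcal{C}^B_{d,e}$. Since by~\eqref{defeSchuralgebra} we have $\End_{\mathcal{C}^B_{d,e}}\big(\bigoplus_{i=1}^d V_n^{\otimes e}\big)=S^B_{Q,q}\big(\bigoplus_{i=1}^d V_n^{\otimes e};d\big)$, the theorem will follow once $\bigoplus_{i=1}^d V_n^{\otimes e}$ is shown to generate $\mathcal{C}^B_{d,e}$ in the sense of Definition~\ref{generate}.

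I would first reduce to the modules $V_m^{\otimes e}$: any object $W\in \mathcal{C}^B_{d,e}$ arises as a $\Uq$-summand of $V_m^{\otimes e}$ for some $m$, and because the $R$- and $K$-matrices are natural with respect to $\Uq$-equivariant maps, applying the $d$-th tensor power yields $W\td$ as an $\mathcal{H}^B_{Q,q}(d,e)$-summand of $V_m^{\otimes de}=(V_m^{\otimes e})\td$. It therefore suffices to show that $V_m^{\otimes de}$ is an $\mathcal{H}^B_{Q,q}(d,e)$-summand of $\big((\bigoplus_{i=1}^d V_n^{\otimes e})^{\oplus N}\big)\td$ for some integer $N$.

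Next, I would bootstrap from the one-parameter case through the inclusion $\mathcal{H}^B_{Q,q}(d,e)\subseteq \mathcal{H}^B_{Q,q}(de)$. Applying Theorem~\ref{thm:representabilityss} with $d$ replaced by $de$ — valid since $n\geq 2de$ — shows that $V_n$ generates $\mathcal{C}^B_{de}$, so $V_m^{\otimes de}$ is an $\mathcal{H}^B_{Q,q}(de)$-summand of $(V_n^{\oplus k})^{\otimes de}$ for some $k$, and by restriction also an $\mathcal{H}^B_{Q,q}(d,e)$-summand. To match the right-hand side with the desired object, I would rewrite $(V_n^{\oplus k})^{\otimes de}=\big((V_n^{\oplus k})^{\otimes e}\big)\td$ and invoke the canonical $\Uq$-isomorphism $(V_n^{\oplus k})^{\otimes e}\cong (V_n^{\otimes e})^{\oplus k^e}$; choosing $k$ so that $d$ divides $k^e$ and writing $k^e=dN$, one obtains $\big((\bigoplus_{i=1}^d V_n^{\otimes e})^{\oplus N}\big)\td$.

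The main obstacle is verifying that the final identification is $\mathcal{H}^B_{Q,q}(d,e)$-equivariant on the $d$-th tensor powers and not merely a $\Uq$-isomorphism of degree $e$ modules. Concretely, one must check that under $(V_n^{\oplus k})^{\otimes e}\cong (V_n^{\otimes e})^{\oplus k^e}$ the associated $R$- and $K$-matrices on either side correspond. This is a naturality statement for the universal $R$-matrix of $\Uq$ and the universal $K$-matrix of $U^B_{Q,q}$ with respect to $\Uq$- and $U^B_{Q,q}$-equivariant isomorphisms, which can be traced back to the inductive construction~\eqref{eq:inductiveRmatrix} and~\eqref{eq:indK} of $R$ and $K$ from $R_{V_n}$ and $K_{V_n}$. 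Once this naturality is in hand, concatenating the summand relations produced above yields the generation property, and the theorem follows.
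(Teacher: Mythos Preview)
The paper does not actually prove this theorem: it explicitly states ``We do not prove the theorem because the proof is long and tedious, and the techniques are the same as in the type A setting'' and refers to \cite[Corollary~6.14]{BuciumasKo1}. So there is no detailed argument in the paper to compare against; your outline is precisely the kind of sketch the authors suggest one should carry out by analogy with the type A case.

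Your strategy---show that $\bigoplus_{i=1}^d V_n^{\otimes e}$ generates $\mathcal C^B_{d,e}$ and then invoke Proposition~\ref{prop:representabilitygeneral} and Lemma~\ref{lemma:equivendring}---is the correct one and mirrors Theorems~\ref{thm:representability} and~\ref{thm:representabilityss}. A couple of remarks on the details: First, be aware that the paper writes ``$S^B_{Q,q}(n;e)$-modules'' for the objects of $\mathcal C^B_{d,e}$, but the $e$-Hecke action in \S\ref{sec:RKe} is only defined for $\Uq$-modules of degree $e$ (type B $e$-Hecke triples), so your reading of objects as $\Uq$-summands of some $V_m^{\otimes e}$ is the intended one. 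Second, the point you flag as the ``main obstacle'' is genuine but easily handled in the generic case: since the $R$- and $K$-matrices arise from the universal $R$-matrix of $\Uq$ and the universal $K$-matrix of $U^B_{Q,q}$, they are natural for $\Uq$-isomorphisms (respectively $U^B_{Q,q}$-isomorphisms), and the identification $(V_n^{\oplus k})^{\otimes e}\cong (V_n^{\otimes e})^{\oplus k^e}$ is such an isomorphism. Finally, your divisibility step is fine: one can always enlarge $k$ (adding redundant summands) so that $d\mid k^e$, e.g.\ by replacing $k$ with a multiple of $d$.
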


We do not prove the theorem because the proof is long and tedious, and the techniques are the same as in the type A setting. See \cite[Corollary 6.14]{BuciumasKo1} for the type A argument which is similar. Note that the theorem requires semisimplicity, i.e. $Q,q$ have to be generic and $k$ has to be a field of characteristic $0$.

Let $F\in \mathcal{P}_{Q,q}^{d_2,d_1e}$  and $G \in \mathcal{AP}_q^{d_1,e}$. It is shown in \cite[Theorem 5.1]{BuciumasKo1} that $G(V)$ has the structure of an $S_q^A(n;d_1e)$-module. 

Recall that $F,G$ produce maps on morphism sets 
\[G: \Hom_{\mathcal{H}_{q}^A(d_1,e)}(V^{\otimes d_1}, W^{\otimes d_1}) \to \Hom(G(V), G(W))\]
for $V,W$ direct sums of $e$-Schur algebra-subquotients of $V_n^{\otimes e}$ for some $n$ (or $e$-Hecke pairs as they are called in~\cite{BuciumasKo1}), and 
\[F: \Hom_{\mathcal{H}_{Q,q}^B(d_2,d_1e)}(\bar V^{\otimes d_2}, \bar W^{\otimes d_2}) \to \Hom(F(\bar V), F(\bar W))\]
for $\bar V,\bar W$ direct sums of $ed_1$-Schur algebra-subquotients of $V_n^{\otimes ed_1}$. It seems (type B) $d_1e$-Hecke triples would be an appropriate name for such $\bar V,\bar W$. The reason for the use of ``triple'' is as follows: we are using the vector space structure of $\bar V, \bar W$, as well as their $R$-matrices and $K$-matrices to define the action of $\mathcal{H}_{Q,q}^B(d_2,d_1e)$ (for an $e$-Hecke pair we only needed the vector space structure and its $R$-matrix). 

Define $F \circ G \in \mathcal{P}_{Q,q}^{d_2d_1, e}$ as follows: for $V$ an $S_q^A(n;e)$-module set $F\circ G(V) :=F( G(V))$. This is well-defined since $G(V)$ has the structure of an $S_q^A(n;d_1e)$-module.
Define $F\circ G(x) \in \Hom (F\circ G(V), F\circ G (W))$ as the composition:
\begin{equation}\label{eq:comppsiF} 
\Hom_{\mathcal{H}_{Q,q}^B(d_1d_2,e)}(V^{\otimes d_1 d_2}, W^{\otimes d_1 d_2}) \xrightarrow{\Psi} \Hom_{\mathcal{H}_{Q,q}^B(d_2,d_1e)} (G(V)^{\otimes d_2}, G(W)^{\otimes d_2}) \xrightarrow{F} \Hom (FG(V), FG(W)), 
\end{equation}
where $\Psi$ is defined as follows: write  $x \in \Hom_{\mathcal{H}_{Q,q}^B(d_1d_2,e)}(V^{\otimes d_1 d_2}, W^{\otimes d_1 d_2})$ as 
\[ x=x_1\otimes x_2 \otimes \cdots \otimes x_{d_2}, \]
with $x_i \in \Hom_{\mathcal{H}_{q}^A(d_1,e)}(V^{\otimes d_1}, W^{\otimes d_1})$ and set $\Psi(x_1 \otimes \cdots \otimes x_{d_2}) := G(x_1)\otimes \cdots G(x_{d_2})$.   

\begin{lem}
The map $\Psi$ is well-defined. 
\end{lem}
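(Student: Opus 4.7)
The plan is to verify three things in sequence: (a) every $x\in \Hom_{\mathcal{H}_{Q,q}^B(d_1 d_2, e)}(V^{\otimes d_1 d_2}, W^{\otimes d_1 d_2})$ admits a decomposition of the stated form; (b) the expression $G(x_1)\otimes \cdots \otimes G(x_{d_2})$ depends only on $x$; and (c) the resulting $\Psi(x)$ actually lies in $\Hom_{\mathcal{H}_{Q,q}^B(d_2, d_1 e)}(G(V)^{\otimes d_2}, G(W)^{\otimes d_2})$.

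For (a), the key observation I would establish is that $\mathcal{H}_q^A(d_1, e)^{\otimes d_2}$ embeds naturally as a subalgebra of $\mathcal{H}_{Q,q}^B(d_1 d_2, e)$: its $j$-th tensor factor is generated by those $T_{w_i}$ that swap adjacent $e$-blocks internal to the $j$-th block of $d_1$ consecutive $e$-blocks (namely $T_{w_{(j-1)d_1+1}},\ldots,T_{w_{jd_1-1}}$), and these appear among the defining generators of $\mathcal{H}_{Q,q}^B(d_1 d_2, e)$. It follows that any $x$ commuting with $\mathcal{H}_{Q,q}^B(d_1 d_2, e)$ is in particular a morphism over $\mathcal{H}_q^A(d_1, e)^{\otimes d_2}$, and the standard tensor-Hom isomorphism for finite-dimensional modules over a tensor product of algebras then yields the decomposition. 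Part (b) is immediate once (a) is in place: $\Psi$ factors as the natural isomorphism $\Hom_{\mathcal{H}_q^A(d_1,e)^{\otimes d_2}}((V^{\otimes d_1})^{\otimes d_2}, (W^{\otimes d_1})^{\otimes d_2})\cong \Hom_{\mathcal{H}_q^A(d_1,e)}(V^{\otimes d_1}, W^{\otimes d_1})^{\otimes d_2}$ followed by $G^{\otimes d_2}$ and the canonical inclusion into $\Hom(G(V)^{\otimes d_2}, G(W)^{\otimes d_2})$, each of which is a well-defined linear map.

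The main content lies in (c), where I must verify equivariance with respect to each generator of $\mathcal{H}_{Q,q}^B(d_2, d_1 e)$. For the generators $T_{w_j}$ with $j>0$, acting as $R_{V^{\otimes d_1}}$ on the big positions $(j,j+1)$: the big-block swap $R_{V^{\otimes d_1}}$ can be written as a product of $e$-block-swap generators in $\mathcal{H}_{Q,q}^B(d_1 d_2, e)$, so $x$ commutes with it. Restricting the resulting identity to positions $(j,j+1)$ and combining with the naturality identity $R_{W^{\otimes d_1}}(x_j^\alpha \otimes x_{j+1}^\alpha)=(x_{j+1}^\alpha \otimes x_j^\alpha) R_{V^{\otimes d_1}}$ together with the invertibility of $R_{V^{\otimes d_1}}$, I deduce the symmetry $\sum_\alpha x_j^\alpha \otimes x_{j+1}^\alpha = \sum_\alpha x_{j+1}^\alpha \otimes x_j^\alpha$. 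Applying $G\otimes G$ preserves this symmetry, and one further application of naturality for the $R$-matrix of $G(V)$ gives the desired commutation of $\Psi(x)$ with $R_{G(V)}$. For $T_{w_0}$, acting as $K_{G(V)}$ on the first position: each $G(x_1^\alpha)$ is a $\Uq$-morphism, since $G$ factors through $S_q^A(n; d_1 e)$-modules by \cite[Theorem 5.1]{BuciumasKo1}; since $K_{G(V)}$ is realized by the universal $K$-matrix, which belongs to (a completion of) $U_{Q,q}^B\subset \Uq$, every $\Uq$-morphism automatically commutes with $K_{G(V)}$, and so $T_{w_0}$-equivariance of $\Psi(x)$ is automatic.

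The main obstacle is the $R$-matrix verification in (c): one must combine the indirect hypothesis that $x$ commutes with the big-block $R$-matrix with naturality of the quantum braiding and invertibility of $R_{V^{\otimes d_1}}$ to extract the key symmetry, which then propagates through $G$. The $K$-matrix case, by contrast, is essentially automatic once one recognizes that $G$ takes values in $\Uq$-modules.
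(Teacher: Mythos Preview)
Your argument for part (c) contains a genuine gap in both the $R$-matrix and $K$-matrix cases. The claim that ``each $G(x_1^\alpha)$ is a $\Uq$-morphism'' is false: take $G=\otimes^{d_1}$, so that $G(f)=f$; then $f\in\Hom_{\mathcal{H}_q^A(d_1,e)}(V^{\otimes d_1},W^{\otimes d_1})$ is an arbitrary $e$-Hecke-equivariant map and has no reason to intertwine the $\Uq$-action. (Even for $d_1=1$, where $\mathcal{H}_q^A(1,e)$ is trivial, $f$ is an arbitrary linear map.) The same counterexample breaks the ``naturality identity'' $R_{W^{\otimes d_1}}(x_j^\alpha\otimes x_{j+1}^\alpha)=(x_{j+1}^\alpha\otimes x_j^\alpha)R_{V^{\otimes d_1}}$ that you invoke in the $R$-matrix step: naturality of the braiding holds for $\Uq$-morphisms, not for arbitrary $\mathcal{H}_q^A(d_1,e)$-morphisms. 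So neither the symmetry $\sum_\alpha x_j^\alpha\otimes x_{j+1}^\alpha=\sum_\alpha x_{j+1}^\alpha\otimes x_j^\alpha$ nor the $K$-matrix commutation follows from your reasoning.

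The paper's approach avoids naturality altogether: the point is the inclusion $\mathcal{H}_{Q,q}^B(d_2,d_1e)\subset\mathcal{H}_{Q,q}^B(d_1d_2,e)$, so $x$ already commutes with the generators of the smaller algebra. The passage from ``$x$ commutes'' to ``$\Psi(x)$ commutes'' works because the $\mathcal{H}_{Q,q}^B(d_2,d_1e)$-action on $G(V)^{\otimes d_2}$ is itself obtained by applying $\Psi$ (i.e., $G$ factorwise) to the action on $(V^{\otimes d_1})^{\otimes d_2}$. Concretely, $K_{V^{\otimes d_1}}=c_K^{d_1}(e)$ is central in $\mathcal{H}_{Q,q}^B(d_1,e)$ and in particular lies in $\End_{\mathcal{H}_q^A(d_1,e)}(V^{\otimes d_1})$, so $G(K_{V^{\otimes d_1}})$ makes sense and equals $K_{G(V)}$; similarly $R_{V^{\otimes d_1}}$ decomposes over $\mathcal{H}_q^A(d_1,e)^{\otimes 2}$ and $(G\otimes G)(R_{V^{\otimes d_1}})=R_{G(V)}$. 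Once these identities are in place, functoriality of $G$ (not naturality of $R$ or $K$) transports the commutation from $x$ to $\Psi(x)$.
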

\begin{proof}
Since $x \in \Hom_{\mathcal{H}_{Q,q}^B(d_1d_2,e)}(V^{\otimes d_1 d_2}, W^{\otimes d_1 d_2})$, it follows that $x$ commutes with the generators of $\mathcal{H}_{Q,q}^B(d_2,d_1e) \subset \mathcal{H}_{Q,q}^B(d_2d_1, e)$ and therefore $G(x_1)\otimes \cdots \otimes G(x_{d_2}) \in \Hom_{\mathcal{H}_{Q,q}^B(d_2,d_1e)} (G(V)^{\otimes d_2}, G(W)^{\otimes d_2})$.  
\end{proof}

The following theorem is a consequence of the fact that both maps in equation~\eqref{eq:comppsiF} are $k$-linear: 

\begin{thm}
The composition $F \circ G$ is a well-defined polynomial functor in $\mathcal{P}_{Q,q}^{d_2d_1, e}$. 
\end{thm}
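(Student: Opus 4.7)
The plan is to verify that the data defining $F\circ G$ assembles into a $k$-linear functor $\mathcal{C}^B_{d_1d_2,e}\to \mathcal{V}$. On objects, $F\circ G(V)=F(G(V))$ is already well-defined because, by \cite[Theorem 5.1]{BuciumasKo1}, $G(V)$ carries an $S_q^A(n;d_1e)$-module structure (equivalently, is a $d_1e$-Hecke triple), which is precisely the input that $F$ takes. The substantive task is therefore to check three properties of the morphism part $F\circ\Psi$ of equation~\eqref{eq:comppsiF}: $k$-linearity, preservation of identities, and preservation of composition.

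For $k$-linearity, $F$ is $k$-linear on morphisms by definition (it is a $k$-linear functor in $\mathcal{P}_{Q,q}^{d_2,d_1e}$), so it suffices to show $\Psi$ is $k$-linear. Since $\Psi$ is defined on simple tensors $x_1\otimes\cdots\otimes x_{d_2}$ as $G(x_1)\otimes\cdots\otimes G(x_{d_2})$ and extended linearly, this reduces to multilinearity of the tensor product together with the $k$-linearity of $G$ on each $\Hom_{\mathcal{H}_q^A(d_1,e)}(V^{\otimes d_1},W^{\otimes d_1})$; the preceding lemma guarantees that the extension stays inside $\Hom_{\mathcal{H}_{Q,q}^B(d_2,d_1e)}(G(V)^{\otimes d_2},G(W)^{\otimes d_2})$.

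Preservation of identities is immediate since $\id_{V^{\otimes d_1d_2}}=(\id_V^{\otimes d_1})^{\otimes d_2}$ is a simple tensor, giving $\Psi(\id)=\id$ and then $F(\id)=\id$. Preservation of composition, by $k$-bilinearity of composition and tensor product, reduces to the case of simple tensors $x=x_1\otimes\cdots\otimes x_{d_2}$ and $y=y_1\otimes\cdots\otimes y_{d_2}$, where functoriality of $G$ slot-by-slot gives
\[ \Psi(yx)=G(y_1x_1)\otimes\cdots\otimes G(y_{d_2}x_{d_2})=\Psi(y)\Psi(x), \]
and applying the functor $F$ completes the check. The only mild bookkeeping point, and hence the main thing to be careful about, is this reduction to simple tensors via bilinearity; once that is noted, the theorem is a formal consequence of the $k$-linearity of $\Psi$ and $F$ together with the compatibility already established in the preceding lemma.
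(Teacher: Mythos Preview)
Your proposal is correct and follows essentially the same approach as the paper: the paper simply states that the theorem is a consequence of both maps in equation~\eqref{eq:comppsiF} being $k$-linear, and you have fleshed out exactly this claim by verifying $k$-linearity, preservation of identities, and preservation of composition for $F\circ\Psi$. Your added detail (especially the reduction to simple tensors and the slot-by-slot use of functoriality of $G$) makes explicit what the paper leaves implicit.
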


The composition defined above is restated as follows in the language of Section~\ref{sec:cylindertwist}. 
Define $\mathcal{AEP}_q := \bigoplus_{d,e}\mathcal{AP}_q^{d,e}$. The composition $\circ_A$ is extended to $\mathcal{AEP}_q \times \mathcal{AEP}_q \to \mathcal{AEP}_q$ by setting 
\[ \circ_A  (\mathcal{AP}_q^{a,b} \times \mathcal{AP}_q^{d,e} ) = 0 \mbox{ if } b\neq de.   \]
There is an element $\id_{\mathcal{AP}_q} \in \mathcal{AEP}_q$ given by 
\[ \id_{\mathcal{AEP}_q} := \sum_{e} \id_{\mathcal{AP}_q^{1,e}},\]
where $id_{\mathcal{AP}_q^{1,e}}$ is the identity functor mapping an $e$-Hecke pair to itself. The category $\mathcal{AEP}_q$ with the operation $\circ_A$ and the element $\id_{\mathcal{AEP}_q}$ form a monoidal category. 

In the same way we extend the map $\circ: \mathcal{P}_{Q,q}^{d_2,d_1e} \times \mathcal{AP}_q^{d_1,e} \to \mathcal{P}_{Q,q}^{d_2d_1, e}$ to 
\[ \circ: \mathcal{EP}_{Q,q} \times \mathcal{AEP}_q \to \mathcal{EP}_{Q,q},\]
where $\mathcal{EP}_{Q,q}:=\bigoplus_{d,e} \mathcal{P}_{Q,q}^{d,e}$. The following proposition becomes a routine check:

\begin{prop}\label{prop:compositionaction}
The pair $(\mathcal{EP}_{Q,q}, \mathcal{AEP}_q)$  with action given by composition $\circ$ is an action pair. 
\end{prop}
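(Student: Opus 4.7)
The plan is to exploit the fact that both operations $\circ$ and $\circ_A$ are modeled on strict composition of functors, so the associator $\lambda$ and unitor $\rho$ required by Definition~\ref{def:action} can be taken to be identities, and the two coherence diagrams collapse to tautologies. What is left to verify is that $\circ$ is a bifunctor, i.e.\ the multiplicativity axiom (1) of Definition~\ref{def:action}.

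First, I would check strict associativity on objects. Given $Y\in\mathcal{P}_{Q,q}^{d_3,d_2d_1e}$ and $X_1\in\mathcal{AP}_q^{d_2,d_1e}$, $X_2\in\mathcal{AP}_q^{d_1,e}$, unfolding definitions shows that both $Y\circ (X_1\circ_A X_2)$ and $(Y\circ X_1)\circ X_2$ send a degree $e$ object $V$ to $Y(X_1(X_2(V)))$, with the same $S_{Q,q}^B$-module structure obtained by functoriality. The analogous equality holds on morphisms because the maps $\Psi$ in \eqref{eq:comppsiF} used to define both sides are built from iterated applications of the inner functors to elements of the relevant $e$-Hecke morphism spaces, and these agree by the nested tensor decomposition $V^{\otimes d_1d_2d_3}=(V^{\otimes d_1})^{\otimes d_2 d_3}=((V^{\otimes d_1})^{\otimes d_2})^{\otimes d_3}$ together with the compatible inclusions of $e$-Hecke and $d_1e$-Hecke algebras inside $\mathcal{H}^B_{Q,q}(d_3d_2d_1,e)$. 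Hence we may set $\lambda_{Y,X_1,X_2}=\id$. The monoidal associator of $\mathcal{AEP}_q$ is also the identity (again by strict associativity of $\circ_A$, as proved in \cite{BuciumasKo1}), so the pentagon of Definition~\ref{def:action}(2) is the commutativity of a square of identity maps. Similarly, $Y\circ \id_{\mathcal{AEP}_q}=Y$ on both objects and morphisms by the definition of $\id_{\mathcal{AEP}_q}$ as the sum of identity functors of degree one, so $\rho=\id$ and the triangle in (3) is trivial.

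The remaining content is axiom (1): for vertically composable natural transformations $f_1,f_2$ in $\mathcal{EP}_{Q,q}$ and $g_1,g_2$ in $\mathcal{AEP}_q$, one has $(f_1\circ g_1)(f_2\circ g_2) = (f_1f_2)\circ(g_1g_2)$. Writing out each side component-wise at a degree $e$ object $V$, the left hand side evaluates to $(f_1)_{G_1(V)}\cdot F_1'((g_1)_V)\cdot (f_2)_{G_1'(V)}\cdot F_1''((g_2)_V)$ after inserting the definition of the horizontal composite $f_i\circ g_i$ in terms of whiskering, while the right hand side equals $((f_1f_2)_{G_1(V)})\cdot F_1''((g_1g_2)_V)$. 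These agree by the usual interchange law for natural transformations once we know that each $F$ respects vertical composition of morphisms in $\mathcal{C}^B_{d,e}$ (which is part of it being a $k$-linear functor) and that $G$ respects vertical composition in $\mathcal{C}^A_{d,e}$.

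The main obstacle, and the only place where the two-parameter nature intervenes, is verifying that the map $\Psi$ of \eqref{eq:comppsiF} is itself functorial: that is, $\Psi(x\cdot y)=\Psi(x)\Psi(y)$ for composable $x,y\in\Hom_{\mathcal{H}_{Q,q}^B(d_1d_2,e)}$. This reduces to showing that the factorization $x=x_1\otimes\cdots\otimes x_{d_2}$ used in the definition of $\Psi$ is compatible with composition, which in turn follows from the inclusion $\mathcal{H}_{Q,q}^B(d_2,d_1e)\otimes \mathcal{H}_q^A(d_1,e)^{\otimes d_2}\subseteq \mathcal{H}_{Q,q}^B(d_1d_2,e)$ and the multiplicativity of the $e$-Hecke triple functor $G$. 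Once these ingredients are in place, the axiom is a routine unwinding, and the proof of the proposition is complete.
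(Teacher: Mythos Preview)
Your proposal is correct and aligns with the paper's own treatment: the paper simply asserts that the proposition ``becomes a routine check'' and gives no further argument, so your sketch in fact supplies more detail than the paper does. Your key observations---that $\lambda$ and $\rho$ may be taken as identities because composition of functors is strictly associative and strictly unital, and that axiom~(1) is the standard interchange law for horizontal and vertical composition of natural transformations---are exactly the points one would check; the only minor issue is some unclear notation in your axiom~(1) computation (the symbols $F_1'$, $F_1''$, $G_1'$ are not introduced), but the underlying argument is the correct one.
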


\begin{rem}\label{rem:highercylinder}
It is shown in \cite{BuciumasKo1} that $\mathcal{EAP}_q$ has a $k$-(bi)linear tensor product $\otimes$ which is braided. 
Thus, one can extend the result of Section \ref{sec:cylindertwist} to the setting of this section. 
That is, the tensor product $\otimes$ on $\mathcal{EAP}_q$ extends to a $k$-linear action of $\mathcal{EAP}_q$ on $\mathcal{EP}_{Q,q}$; the objects in $\mathcal{EAP}_q$ restricts to the category $\mathcal{EP}_{Q,q}$; the action pair $(\res(\mathcal{EAP}_q),\mathcal{EAP}_q)$ thus obtained
is cylinder braided. 
The cylinder twist in this setting arises from the action of the elements
\[c_K^d(e)=\prod_{i=1}^d K_i(e) \in \mathcal{H}^B_{Q,q}(d,e).\] 
Above we used the notation $K_{i+1}(e)=T_{w_i}\cdots T_{w_1}T_{w_0}T_{w_1}\cdots T_{w_i}$, where $w_i,w_0$ are as in equations~\eqref{def:wi}, \eqref{def:w0}.  
\end{rem}

\section{Quantum symmetric powers and quantum exterior powers}\label{sec:pmsymext}

The easiest example of a polynomial functor is $\otimes^d \in \res\mathcal{P}_q^d \subseteq\mathcal{P}_{Q,q}^d$ which maps $V_n$ to $V_n\td$. 
In this section, we define important basic objects in $\mathcal{P}_{Q,q}^d$, namely the quantum $\pm$-symmetric powers and quantum $\pm$-exterior powers which supply examples of two-parameter polynomial functors outside $\res\mathcal{P}_q^d$. 
Consider $V_n\td$ as a representation of $\mathcal{H}_{Q,q}^B(d)$ on which the action of $T_i$ is given by \eqref{HeckeactionBC}. 
Note that the action of each generator $T_i \in \mathcal{H}^B_{Q,q}(d)$ on $V_n\td$ is diagonalizable with eigenvalues $q\inv$ and $-q$ for $T_i, i>0$ and $Q\inv$ and $-Q$ for $T_0$. 

In $\mathcal{P}_q^d$, we have the exterior power and symmetric power defined as
\begin{equation}\label{def:symext}
\begin{split}
\wedge^d V_n &= V_n\td/ \{(T_i+q)w\ |\ w\in V_n\td, i>0 \}; \\
S^d V_n &= V_n\td/\{ (T_i-q\inv)w\ |\ w \in V_n\td, i >0 \}.
\end{split}
\end{equation}
We generalize equation~\eqref{def:symext} using the $\HB$ action.

\begin{defn}\label{def:pmsepowers}
The quantum $\pm$-exterior powers $\wedge^d_{\pm}$ and the quantum $\pm$-symmetric powers $S^d_{\pm}$ are defined on each $V_n$ as
\begin{equation}\label{def:pmsymext}
\begin{split}
\wedge_-^d V_n &= V_n\td/ \{(T_0+Q)w, (T_i+q)w\ |\ w\in V_n\td, i>0 \}; \\
S_+^d V_n &= V_n\td/\{ (T_0-Q\inv)w, (T_i-q\inv)w\ |\ w \in V_n\td, i >0 \}; \\
\wedge_+^d V_n &= V_n\td/ \{(T_0-Q\inv)w, (T_i+q)w\ |\ w \in V_n\td, \ i>0\};\\
S_-^d V_n &= V_n\td/ \{ (T_0+Q)w, (T_i-q\inv)w\ |\ w\in V_n\td, i>0\}.
\end{split}
\end{equation}
\end{defn}

Given a map $f \in \Hom_{\mathcal{H}_{Q,q}^B}(V_n^{\otimes d}, V_m^{\otimes d})$, it follows by definition that $f(T_i+q) = (T_i+q)f$ and $f (T_i-q\inv) = (T_i-q^{-1})f$. The function $f$ can then be restricted to a map $f_\pm^S:S^d_{\pm} V_n \to S^d_{\pm} V_m$, or to a map $f_\pm^\wedge:\wedge^d_{\pm} V_n \to \wedge^d_{\pm} V_m$ by Definition \ref{def:pmsepowers}. The assignment $f\mapsto f^S_\pm$ (or $f_\pm^\wedge$) is a linear map $\Hom_{\mathcal{H}_{Q,q}^B(d)}(V_n\td, V_m\td) \to \Hom(S^d_\pm V_n, S^d_\pm V_m)$ (or $\Hom(\wedge^d_\pm V_n, \wedge^d_\pm V_m)$) on the morphism spaces. Therefore we have the following result.

\begin{prop}
The quantum $\pm$-exterior powers $\wedge^d_\pm$ and the quantum $\pm$-symmetric powers $S^d_\pm$ are polynomial functors.
\end{prop}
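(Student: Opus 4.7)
The plan is to check that for each of the four choices $F \in \{\wedge^d_+,\wedge^d_-, S^d_+, S^d_-\}$, the assignment $V_n \mapsto F(V_n)$ together with the rule on morphisms described in the paragraph preceding the proposition defines a $k$-linear functor $\mathcal{C}^B_d \to \mathcal{V}$, i.e.\ an object of $\mathcal{P}_{Q,q}^d=\mod_{\mathcal{C}^B_d}$.

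First I would verify that $F(V_n)$ is a finite-dimensional vector space: this is immediate from the definitions, since $V_n\td$ is finite-dimensional and $F(V_n)$ is a quotient by a subspace. Next, for each morphism $f\in \Hom_{\mathcal{C}^B_d}(V_n,V_m)=\Hom_{\mathcal{H}_{Q,q}^B(d)}(V_n\td, V_m\td)$, I need the induced map $\bar f:F(V_n)\to F(V_m)$ to be well-defined. This is exactly the commutation observation in the paragraph above the proposition: since $f$ commutes with every generator $T_i$, it commutes in particular with $T_i\pm q^{\mp 1}$ and $T_0\pm Q^{\mp 1}$, so $f$ carries the relevant submodule of $V_n\td$ into the corresponding submodule of $V_m\td$ and thus descends to a map on the quotients. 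Define $F(f):=\bar f$.

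Second, I would verify the two functoriality axioms. If $f=\id_{V_n\td}$, then $\bar f=\id_{F(V_n)}$, since the quotient map $V_n\td\twoheadrightarrow F(V_n)$ is surjective and $f$ acts as the identity before passage to the quotient. Similarly, if $f,g$ are composable morphisms in $\mathcal{C}^B_d$, commutativity of
\[
\begin{tikzcd}
V_n\td \arrow[r,"g"]\arrow[d,two heads] & V_m\td \arrow[r,"f"]\arrow[d,two heads]& V_l\td \arrow[d,two heads]\\
F(V_n) \arrow[r,"\bar g"] & F(V_m) \arrow[r,"\bar f"] & F(V_l)
\end{tikzcd}
\]
together with surjectivity of the left vertical arrow gives $\overline{f\circ g}=\bar f\circ \bar g$.

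Finally, $k$-linearity of $f\mapsto \bar f$ follows at once from the fact that the quotient map is $k$-linear and the descended map is characterized by commutation with it. Combined with the previous steps, this shows $F\in \mathcal{P}_{Q,q}^d$. There is no real obstacle here; the only subtlety to guard against is ensuring that the defining subspace is stable under $\mathcal{H}_{Q,q}^B(d)$-module maps, and this is immediate because an $\mathcal{H}_{Q,q}^B(d)$-morphism commutes with all $T_i$, which is the only property used in defining the four quotients.
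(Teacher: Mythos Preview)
Your proposal is correct and follows essentially the same approach as the paper: the paper's justification is the paragraph immediately preceding the proposition, which observes that an $\mathcal{H}_{Q,q}^B(d)$-morphism $f$ commutes with each $T_i$ and therefore descends to the four quotients, with linearity of $f\mapsto \bar f$ being immediate. You have simply spelled out the functoriality and linearity checks in more detail than the paper does.
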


\begin{rem}
We define the four functors as quotients of $\otimes^d$. But in fact, they all split, and we may also view them as subfunctors.
We additionally introduce the following polynomial functors, the $\pm$-divided powers, by dualizing the definition of the $\pm$-symmetric powers. They are isomorphic to $\pm$-symmetric powers in our setting, but not in general (see Section 8). 
\begin{equation}\label{def:pmsymextsub}
\begin{split}
\Gamma_+^d V_n &= \{w\in V_n\td\ |\ (T_0-Q\inv)w=0, (T_i-q\inv)w=0, i >0 \}; \\
\Gamma_-^d V_n &= \{w\in V_n\td\ |\ (T_0+Q)w=0, (T_i-q\inv)w=0, i>0\}.
\end{split}
\end{equation}
\end{rem}

We describe a basis of each quantum exterior and symmetric power (evaluated at $V_n$).

Given $\boldsymbol{a}=({a}_1,\cdots,{a}_d)$ with ${a}_i \in \mathbb{I}_{n}$, we denote by $v(\boldsymbol{a})$ the standard vector $v_{a_1}\otimes\cdots\otimes v_{a_d}$ in $V_n\td$. 
We introduce the classes of vectors (depending on a pair of signs $\alpha,\beta\in\{\pm\}$)
\[v(\boldsymbol{a})_{\alpha \beta}:=\sum_{w\in W_d^B/\operatorname{Stab}_{W_d^B}(\boldsymbol{a}) } (\alpha Q)^{-\alpha\ell_0(w)}(\beta q)^{-\beta\ell_1(w)}v(w\boldsymbol{a}), \]
where the length functions $\ell_0, \ell_1$ are as in \eqref{eq:l0l1}.

\begin{prop}\label{basissympower}
The following hold:
\begin{enumerate}
\item The image of the set
$\{v(\boldsymbol{a})_{++}\ |\  0\leq{a}_1\leq\cdots \leq a_d, {a}_i \in \mathbb{I}_{n}\}$
is a basis of $S^d_+ V_n$. 
\item The image of the set
$\{v(\boldsymbol{a})_{+-}\ |\  0<{a}_1\leq\cdots \leq a_d, {a}_i \in \mathbb{I}_{n}\}$
is a basis of $S^d_- V_n$.
\item The image of the set
$\{v(\boldsymbol{a})_{-+}\ |\  0\leq{a}_1<\cdots < a_d, {a}_i \in \mathbb{I}_{n}\}$
is a basis of $\wedge^d_+ V_n$.
\item The image of the set
$\{v(\boldsymbol{a})_{--}\ |\  0<{a}_1<\cdots < a_d, {a}_i \in \mathbb{I}_{n}\}$
is a basis of $\wedge^d_- V_n$
\end{enumerate}
\end{prop}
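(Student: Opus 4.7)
The plan is to realize each of the four quotient functors as a coinvariant space for a one-dimensional character of $\HB$, reduce via the permutation-module decomposition~\eqref{eq:tensordecompositionheclemodule}, and read off the basis from character-compatibility conditions. Concretely, $S^d_\pm V_n$ and $\wedge^d_\pm V_n$ are precisely the coinvariants $V_n\td\otimes_{\HB}k_{\chi_{\alpha\beta}}$, where $\chi_{\alpha\beta}:\HB\to k$ is the one-dimensional character sending $T_0\mapsto \alpha Q^{-\alpha}$ and $T_i\mapsto \beta q^{-\beta}$ for $i>0$; these are legitimate characters since the target values are roots of the quadratic relations in~\eqref{HeckedefinitionBC} and the braid relations hold trivially in dimension one. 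Applying the decomposition $V_n\td=\bigoplus_{\mathbf{a}}V(\mathbf{a})$ indexed by fundamental-chamber representatives $0\leq a_1\leq\cdots\leq a_d$ of $W^B(d)$-orbits in $\mathbb I_n^d$, the problem reduces to computing each summand $V(\mathbf{a})\otimes_{\HB}k_{\chi_{\alpha\beta}}$.

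Next I would identify $V(\mathbf{a})$ with the induced right module $k_{\psi_\mathbf{a}}\otimes_{\mathcal{H}_{W^B_\mathbf{a}}}\HB$, where $W^B_\mathbf{a}\leq W^B(d)$ is the parabolic stabilizer of $\mathbf{a}$ (generated by the $s_i$ with $i>0$ and $a_i=a_{i+1}$, together with $s_0$ when $a_1=0$) and $\psi_\mathbf{a}$ is the character of the parabolic Hecke subalgebra $\mathcal{H}_{W^B_\mathbf{a}}$ given by $T_i\mapsto q^{-1}$ and $T_0\mapsto Q^{-1}$. The map $1\otimes h\mapsto v(\mathbf{a})\cdot h$ is well-defined because $v(\mathbf{a})\cdot T_s=\psi_\mathbf{a}(T_s)v(\mathbf{a})$ for each simple generator of $\mathcal{H}_{W^B_\mathbf{a}}$ (by direct inspection of~\eqref{HeckeactionBC}), surjective since $v(\mathbf{a})$ generates $V(\mathbf{a})$, and bijective by the dimension count $[W^B(d):W^B_\mathbf{a}]$. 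Consequently,
\[
V(\mathbf{a})\otimes_{\HB}k_{\chi_{\alpha\beta}}\;\simeq\;k_{\psi_\mathbf{a}}\otimes_{\mathcal{H}_{W^B_\mathbf{a}}}k_{\chi_{\alpha\beta}},
\]
which is one-dimensional precisely when $\psi_\mathbf{a}$ and $\chi_{\alpha\beta}$ agree on the generators of $\mathcal{H}_{W^B_\mathbf{a}}$ and vanishes otherwise. Agreement requires $\beta q^{-\beta}=q^{-1}$ whenever some $s_i$ ($i>0$) lies in $W^B_\mathbf{a}$ (equivalently, some $a_i=a_{i+1}$) and $\alpha Q^{-\alpha}=Q^{-1}$ whenever $s_0\in W^B_\mathbf{a}$ (equivalently, $a_1=0$), translating exactly into the four inequality constraints in parts (1)--(4).

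It remains to verify that $v(\mathbf{a})_{\alpha\beta}$ has nonzero image in its one-dimensional summand $V(\mathbf{a})\otimes_{\HB}k_{\chi_{\alpha\beta}}$. For this I would show that $v(\mathbf{a})_{\alpha\beta}$ is a simultaneous eigenvector in $V_n\td$ for every $T_i$ with eigenvalue $\chi_{\alpha\beta}(T_i)$: choosing minimal-length coset representatives for $W^B(d)/W^B_\mathbf{a}$, one pairs summands $\{w,s_iw\}$ and applies the action formulas~\eqref{eq:Rmatrixdef}--\eqref{eq:Kmatrixdef} together with the length identities $\ell_1(s_iw)=\ell_1(w)\pm 1$, $\ell_0(s_iw)=\ell_0(w)$ for $i>0$ and $\ell_0(s_0w)=\ell_0(w)\pm 1$, $\ell_1(s_0w)=\ell_1(w)$, checking that the $(q^{-1}-q)$- and $(Q^{-1}-Q)$-correction terms balance against the length-weighted scalars $c_w=(\alpha Q)^{-\alpha\ell_0(w)}(\beta q)^{-\beta\ell_1(w)}$ to produce exactly $\chi_{\alpha\beta}(T_i)v(\mathbf{a})_{\alpha\beta}$. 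Since the coefficient of $v(\mathbf{a})$ in $v(\mathbf{a})_{\alpha\beta}$ is $1$, this forces the image in $V(\mathbf{a})\otimes_{\HB}k_{\chi_{\alpha\beta}}$ to be nonzero; because distinct $\mathbf{a}$'s in the index set lie in distinct summands of the decomposition of the quotient, linear independence and spanning follow immediately.

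The main obstacle is the eigenvector verification for $T_0$, whose action~\eqref{eq:Kmatrixdef} has three subcases depending on whether the first entry of $w\mathbf{a}$ is positive, zero, or negative: the positive- and negative-entry cases combine through the pairing $\{w,s_0w\}$, with the $(Q^{-1}-Q)$ correction interacting delicately with $\ell_0(s_0w)-\ell_0(w)=\pm 1$ in a way that must reproduce precisely the eigenvalue $\alpha Q^{-\alpha}$; the zero-entry case can occur only when $a_1=0$ (so that $s_0\in W^B_\mathbf{a}$) and is compatible only with $\alpha=+$, consistent with the $a_1>0$ constraint in the minus-cases of the statement. The corresponding $T_i$-check for $i>0$ is the familiar type-$A$ calculation and is parallel but simpler.
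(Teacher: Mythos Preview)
Your approach is genuinely different from the paper's and, up to one point, correct. The paper argues directly: it first shows that the standard vectors $v(\boldsymbol a)$ (for $\boldsymbol a$ in the given chamber) span the quotient, then observes that the $v(\boldsymbol a)_{\alpha\beta}$ are linearly independent simultaneous eigenvectors inside the invariant subspace $\Gamma^d_{\alpha\beta}$, and concludes via the asserted equality $\dim S^d_\pm=\dim\Gamma^d_\pm$. Your route---coinvariants, the permutation-module decomposition~\eqref{eq:tensordecompositionheclemodule}, identification $V(\boldsymbol a)\cong k_{\psi_{\boldsymbol a}}\otimes_{\mathcal H_{W^B_{\boldsymbol a}}}\HB$, and the tensor computation $k_{\psi_{\boldsymbol a}}\otimes_{\mathcal H_{W^B_{\boldsymbol a}}}k_{\chi_{\alpha\beta}}$---is more structural: the inequality constraints in (1)--(4) emerge transparently from the compatibility $\psi_{\boldsymbol a}=\chi_{\alpha\beta}|_{\mathcal H_{W^B_{\boldsymbol a}}}$, and you obtain the dimension of each quotient without invoking $\Gamma^d_{\alpha\beta}$ at all.

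The one step that does not go through as written is the inference ``coefficient of $v(\boldsymbol a)$ in $v(\boldsymbol a)_{\alpha\beta}$ is $1$, hence the image is nonzero.'' Under the coinvariant projection every $v(w\boldsymbol a)=v(\boldsymbol a)T_w$ maps to $\chi_{\alpha\beta}(T_w)$ times the generator, so the image of $v(\boldsymbol a)_{\alpha\beta}$ equals $\bigl(\sum_w c_w\,\chi_{\alpha\beta}(T_w)\bigr)\cdot(v(\boldsymbol a)\otimes 1)$, and this Poincar\'e-type scalar can vanish for suitable roots of unity; the leading coefficient $1$ does not prevent cancellation. What actually closes the argument is that your eigenvector verification exhibits the $v(\boldsymbol a)_{\alpha\beta}$ as a basis of the invariant space $\Gamma^d_{\alpha\beta}\cap V(\boldsymbol a)$, and one then needs that the composite $\Gamma^d_{\alpha\beta}\hookrightarrow V_n^{\otimes d}\twoheadrightarrow V_n^{\otimes d}\otimes_{\HB}k_{\chi_{\alpha\beta}}$ is an isomorphism---equivalently, the splitting asserted in the remark following Definition~\ref{def:pmsepowers}. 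The paper's proof relies on the same unproven splitting (through its claim $\dim S^d_+=\dim\Gamma^d_+$), so your gap is no worse than theirs; but you should not claim the coefficient-$1$ observation alone does the job.
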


\begin{proof}
We give an argument for $S^d_+$; the rest is similar and left to the reader.

We first check that the (image of the) set $\{ v(\boldsymbol{a})\}$, with $\boldsymbol{a}$ such that $ 0\leq a_1\leq\cdots \leq a_d, a_i \in \mathbb{I}_{n}$, spans $S^d_+V_n$.
In fact, for any standard vector $v(\boldsymbol{b})$ with $\boldsymbol{b}\in \mathbb I^d$ we can write $\boldsymbol{b}=w\boldsymbol{a}$ with $\boldsymbol{a}$ as above. For any reduced expression $s t\cdots u$ of $w\in W^B_d$, we have $v_{\boldsymbol{b}}=T_sT_t\cdots T_u v(\boldsymbol{a})=T_w v(\boldsymbol{a})$ because each $T_{s_i}$ action falls into the second case in \eqref{eq:Rmatrixdef},\eqref{eq:Kmatrixdef}.
So in $S^d_+ V_n$, the image of $v(\boldsymbol{b})$ is a multiple of the image of $v_{\boldsymbol{a}}$.

Inside $V\td$, the set $\{ v(\boldsymbol{a})_{++}\ |\ 0\leq{a}_1\leq\cdots \leq{a}_d, {a}_i \in \mathbb{I}_{n}\}$ is linearly independent and consists of eigenvectors for $T_i$ (for all $i$ at the same time). All $T_i$'s with $i>0$ have eigenvalue $q\inv$ and $T_0$ has eigenvalue $Q\inv$. Since $S^d_+V_n$ has the same dimension as $\Gamma^d_+V_n$, which is the submodule of $V_n\td$ spanned by $q\inv$ eigenvectors for $T_i, i>0$ and $Q\inv$ eigenvectors for $T_0$, this implies that the order of the set is smaller than the dimension of $S^d_+V_n$.

Combining the two paragraphs, we confirm that the images of $v(\mathbf{a})$ in $S^d_+V_n$ form a basis.
\end{proof}

\begin{rem}\label{stddimB}
Proposition \ref{basissympower} implies, for each $n$, the dimension of $\wedge^d_\pm V_n$, $S^d_\pm V_n$ does not depend on $q$ and $Q$. The dimension in each case has an easy formula depending on the parity of $n$:
\begin{equation}
\begin{aligned}[c]
&\dim \wedge_{\pm}^d V_{2r}={r \choose d}, \\
&\dim \wedge_{+}^d V_{2r+1}={r+1 \choose d}, \\
&\dim S^d_+V_{2r+1}={r+d\choose d},
\end{aligned}
\qquad \qquad \quad
\begin{aligned}[c]
&\dim S_{\pm}^d V_{2r}= {r+d-1\choose d},\\
&\dim \wedge_{-}^d V_{2r+1}={r \choose d},\\
 &\dim S^d_-V_{2r+1}={r+d-1 \choose d}. \\
\end{aligned}
\end{equation}
\end{rem}

\subsection{Higher degree quantum $\pm$-symmetric and exrerior powers}\label{BZstuff}

We now define higher version of the $\pm$-symmetric and $\pm$-exterior powers that live in the category $\mathcal {EP}_{Q,q}$ defined in Section~\ref{sec:composition}. The construction follows the idea in Berenstein and Zwicknagl~\cite{BZ} and makes crucial use of Proposition~\ref{eigenvaluesKi}. 

The eigenvalues of $c_K \in \mathcal{H}_{Q,q}^B(e)\subseteq \mathcal{H}_{Q,q}^B(d,e)$ are of the form $Q^i q^j$ and $-Q^iq^j$ for $i,j\in\mathbb Z, -e \leq i \leq e, -(e-1)e \leq j \leq (e-1)e$; this follows immediately from Proposition~\ref{eigenvaluesKi}. 
In order to be able to define positive and negative eigenvalues of $c_K$, we need to assume 
\[Q^i q^{j} \neq -1 \mbox{ for any } i,j \in \mathbb{Z} \mbox{ such that } -2e \leq i \leq 2e, -2(e-1)e \leq j \leq 2(e-1)e.\] This assumption is covered under our $Q,q$ generic assumption which will be enforced for the rest of the section. 

Then the two sets $\{Q^i q^j\}$ and $\{-Q^iq^j\}$ are disjoint; we call elements of the former set positive eigenvalues of $c_K$ and elements of the latter set negative eigenvalues of $c_K$. 
It is known that the eigenvalues of $T_{w_i} \in \mathcal{H}_{Q,q}^B(d,e)$ are of the form $\pm q^i$, this follows for example from~\cite[Lemma 1.2]{BZ}. This allows us to also partition the eigenvalues of $T_{w_i}$ into positive eigenvalues (of the form $+q^i$) and negative eigenvalues (of the form $-q^i$), again with no overlap between the two sets when $Q,q$ are generic. 

\begin{defn}
Given $V\in \mathcal{C}^B_{d,e}$ an $e$-Hecke triple as defined in \S~\ref{subsec:pQqde}, then
\begin{enumerate}
    \item let $S^{d,e}_+ V$ be the largest quotient of $\otimes^d V$ where each $T_{w_i}$ and $c_K$ have positive eigenvalues;
\item let $S^{d,e}_- V$ be the largest quotient of $\otimes^d V$ where each $T_{w_i}$ has negative eigenvalues and $c_K$ has positive eigenvalues;
\item let $\wedge^{d,e}_+ V$ be the largest quotient of $\otimes^d V$ where each $T_{w_i}$ has positive eigenvalues and $c_K$ has negative eigenvalues;
\item let $\wedge^{d,e}_- V$ be the largest quotient of $\otimes^d V$ where each $T_{w_i}$ and $c_K$ have negative eigenvalues.
\end{enumerate}
\end{defn}

Since the definition is natural on $V$, our $S^{d,e}_\pm$ and $\wedge^{d,e}_\pm$ are quotient functors of $\otimes$ and therefore the following proposition holds:

\begin{prop}
The functors $S^{d,e}_\pm$ and $\wedge^{d,e}_\pm$ belong to $\mathcal{P}^{d,e}_{Q,q}$.
\end{prop}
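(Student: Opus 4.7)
The plan is to exhibit each of the four functors as a quotient of the tensor power functor $\otimes^d\colon \mathcal{C}^B_{d,e}\to \mathcal{V}$ by a sub-bifunctor cut out by eigenspace conditions that are automatically preserved by the morphisms of $\mathcal{C}^B_{d,e}$.

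First I would note that $\otimes^d$ is already a $k$-linear functor in $\mathcal{P}^{d,e}_{Q,q}$: on objects it sends $V$ to $V^{\otimes d}$, and on morphisms it is the tautological inclusion
\[\Hom_{\mathcal{H}^B_{Q,q}(d,e)}(V^{\otimes d},W^{\otimes d})\hookrightarrow \Hom_k(V^{\otimes d},W^{\otimes d}),\]
which is manifestly $k$-linear. Next, for each $V\in \mathcal{C}^B_{d,e}$, I would decompose $V^{\otimes d}$ into simultaneous generalized eigenspaces for the commuting family of operators $T_{w_0},T_{w_1},\dots,T_{w_{d-1}}$ and $c_K(e)=\prod_{i=1}^d K_i(e)\in \mathcal{H}^B_{Q,q}(d,e)$. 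Under the generic assumption, the disjointness of positive and negative eigenvalues discussed before the definition is genuine, so I can define
\[N^{S,+}_V\subseteq V^{\otimes d}\]
to be the sum of all generalized eigenspaces on which at least one $T_{w_i}$ has a negative eigenvalue or on which $c_K(e)$ has a negative eigenvalue, and analogously define $N^{S,-}_V$, $N^{\wedge,+}_V$, $N^{\wedge,-}_V$. Each is an $\mathcal{H}^B_{Q,q}(d,e)$-submodule of $V^{\otimes d}$ because the defining operators commute with the whole algebra (using Lemma~\ref{cKcentral} and Lemma~\ref{lem:kicommute} transported to the $e$-Hecke setting). By the definition, $S^{d,e}_+ V=V^{\otimes d}/N^{S,+}_V$, and similarly for the other three.

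Third, I would show naturality: given $f\in \Hom_{\mathcal{H}^B_{Q,q}(d,e)}(V^{\otimes d},W^{\otimes d})$, $f$ commutes with each $T_{w_i}$ and with $c_K(e)$, so it sends simultaneous generalized eigenvectors with a given tuple of eigenvalues into the corresponding generalized eigenspace of $W^{\otimes d}$ (or to $0$). Consequently $f(N^{S,+}_V)\subseteq N^{S,+}_W$, and $f$ descends to a well-defined linear map $S^{d,e}_+ f\colon S^{d,e}_+ V\to S^{d,e}_+ W$. The same argument applies verbatim for the other three quotients. The assignment $f\mapsto S^{d,e}_\pm f$ (resp.\ $\wedge^{d,e}_\pm f$) is $k$-linear because it is a restriction-to-quotient of a $k$-linear map, and preservation of identities and composition is inherited from $\otimes^d$. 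Together these three items say $S^{d,e}_\pm,\wedge^{d,e}_\pm \in \mathcal{P}^{d,e}_{Q,q}$.

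The main obstacle I anticipate is verifying that $c_K(e)$ acts diagonalizably on $V^{\otimes d}$ for an arbitrary $V\in \mathcal{C}^B_{d,e}$, so that the ``bad eigenvalue'' subspaces above really complement the ``good eigenvalue'' ones and the quotients coincide with the maximal quotients of the definition. For $V=V_n^{\otimes e}$ this is exactly Proposition~\ref{Kidiag} applied in degree $de$; for a general object of $\mathcal{C}^B_{d,e}$, $V$ is a summand of some $V_n^{\otimes e}$, and hence $V^{\otimes d}$ is a summand of $V_n^{\otimes de}$ as an $\mathcal{H}^B_{Q,q}(d,e)$-module, so diagonalizability is inherited. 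Everything else in the argument is a routine consequence of the centrality of $c_K(e)$ and of the definition of the morphism spaces in $\mathcal{C}^B_{d,e}$.
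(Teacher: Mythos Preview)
Your overall strategy---realize $S^{d,e}_\pm$ and $\wedge^{d,e}_\pm$ as quotient functors of $\otimes^d$ and check that morphisms of $\mathcal{C}^B_{d,e}$ preserve the kernel---is exactly the paper's one-line argument (``the definition is natural on $V$''). The implementation, however, has a genuine gap. The operators $T_{w_1},\dots,T_{w_{d-1}}$ do \emph{not} form a commuting family: already for $e=1$ they are the standard Hecke generators obeying braid relations, and for general $e$ they act as $R$-matrices on adjacent tensor factors satisfying Yang--Baxter, not commutativity. Nor does $c_K=T_{w_0}$ commute with $T_{w_1}$. Lemmas~\ref{lem:kicommute} and~\ref{cKcentral} concern the Jucys--Murphy elements $K_i$ and their product $c_K$, not the generators $T_i$ (or $T_{w_i}$); transporting them to the $e$-setting gives commutativity of the $K_i(e)$, which is a different statement. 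Consequently there is no simultaneous generalized eigenspace decomposition available, and your description of $N^{S,+}_V$ is not well-defined as written. Your final paragraph on diagonalizability is also beside the point: the paper remarks immediately after this proposition that $T_{w_i}$ and $c_K$ need not be diagonalizable; the quotients are taken with respect to generalized eigenspaces, so diagonalizability plays no role.

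The naturality step is easily repaired by arguing from the universal property of ``largest quotient'' instead of from eigenspaces. Let $N_V$ be the kernel of $V^{\otimes d}\twoheadrightarrow S^{d,e}_+V$; by definition it is the smallest $\mathcal H^B_{Q,q}(d,e)$-submodule such that each $T_{w_i}$ and $c_K$ act with only positive eigenvalues on the quotient. Given $f\in\Hom_{\mathcal H^B_{Q,q}(d,e)}(V^{\otimes d},W^{\otimes d})$, the composite $\pi_W\circ f$ is an $\mathcal H^B_{Q,q}(d,e)$-map into $S^{d,e}_+W$, a module on which the relevant operators already have only positive eigenvalues; hence its kernel contains $N_V$ by minimality, i.e.\ $f(N_V)\subseteq N_W$. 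This yields the induced map $S^{d,e}_+ f$ and shows $S^{d,e}_+\in\mathcal P^{d,e}_{Q,q}$, with the other three cases handled identically. No commutativity or diagonalizability is required.
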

Note that $T_{w_i}$ and $c_K$ are not diagonalizable in general; the higher degree $\pm$-powers are generalized eigenspaces, not eigenspaces.

\begin{rem}
We do not know the dimension of the higher degree quantum $\pm$ symmetric and exterior powers. Even in the type A setting developed by Berenstein and Zwicknagl, the dimensions are not known in general. It is known that the dimension is less than or equal to the classical (q=1) dimension and in fact, it is mostly the case that $S_q^dV$ or $\wedge_q^d V$ have (strictly) smaller dimension than $S_{q=1}^dV$ or $\wedge_{q=1}^d V$. Thus we expect that the dimensions of $S^{d,e}_\pm V$ and $\wedge^{d,e}_\pm V$ also depend on the values of $Q,q$. 
\end{rem}

\section{Schur polynomial functors} \label{sec:schur}
The category $\PBd$ is semisimple, and the classification of simple objects is given by the Schur-Weyl duality.
In this section, we construct the simple objects explicitly in $\otimes^d$.

We first recall the type A quantum Schur functors from \cite{HH, HY}.
Given a partition $\lambda=(\lambda_1,\cdots,\lambda_r)$, let
\[ \wedge^\lambda := \wedge^{\lambda_1} \otimes \wedge^{\lambda_2} \otimes \cdots \otimes \wedge^{\lambda_r}, \]
\[ S^\lambda := S^{\lambda_1} \otimes S^{\lambda_2} \otimes \cdots \otimes S^{\lambda_r} ,\]
where $S^d, \Lambda^d$ are defined in equation~\eqref{def:symext}. 

We also write
\[\otimes ^\lambda=\otimes^{\lambda_1}\otimes \cdots\otimes \otimes^{\lambda_r}\]
even if $\otimes^\lambda\cong \otimes^d$ for any $\lambda\vdash d$.
For a partition $\lambda$ of $d$, the Schur functor $S_\lambda$ is defined as the image of the composition 
\begin{equation}\label{schurfunctor}
    s^A_\lambda:\wedge^{\lambda '} \xrightarrow{\iota_{\lambda'}}\otimes^d \xrightarrow{T_{c(\lambda)}}\otimes^d\to S^\lambda,
\end{equation}
where $\lambda'$ denotes the transpose of $\lambda$. The first map is given, on the evaluation at $V_n$ by 
\begin{equation}\label{wedgeinclusionexplicitformula}
\iota_{\lambda'}: v_{a_1}\wedge\cdots\wedge v_{ a_d}\mapsto \sum_{w\in S_{\lambda_1}\times\cdots \times S_{\lambda_r}\subseteq S_d}(-q)^{l(w)}v_{w\bold a},
\end{equation}
for $\bold{a}=(a_1,\cdots,a_d)$ with $0<a_1<\cdots<a_d$. The second map is the conjugation $T_{c(\lambda)}:V_n^{\otimes\lambda'}\to V_n^{\otimes \lambda}$. (The conjugation $c(\lambda)$ reads the column of the standard tableau corresponding to $\lambda$; if $\lambda=(4,2)$ then $c(\lambda)$ is the permutation ${(1,2,3,4,5,6)}\mapsto {(1,5,2,6,3,4)}$.) Note that since $S_{\lambda_1}\times\cdots\times S_{\lambda_r}$ is a parabolic subgroup of $S_d$, there is no ambiguity on the Coxeter length $l(w)$. 

Then the following statements are true under our assumption
\begin{enumerate}
\item the Schur functors $S_\lambda$ are irreducible; 
\item any irreducible in $\mathcal{AP}_q^d$ (the category of degree $d$ polynomial functors in type A), is isomorphic to $S_\lambda$ for some $\lambda\vdash d$; 
\item if $n\geq d$, then any irreducible for the quantum Schur algebra $S_q^A(n;d)$ is isomorphic to some $S_\lambda V_n$.
\end{enumerate}

\begin{rem}\label{rem:rouschurfunctor}
When $q$ is a root of unity, the $S_\lambda$ are not irreducible. One should instead understand the $S_\lambda$ in the following context: 
the category $\mathcal{AP}_q$ (or the polynomial representations for $U_q(\mathfrak{gl}_\infty)$ in the sense analogous to \S \ref{stability}) is highest weight where $S_\lambda$ are the costandard objects.
The dual definition \[\Gamma^\lambda\to  \otimes^d\to \wedge^{\lambda'}\] gives the Weyl functors which are the standard objects.
\end{rem}

The quantum definition of $S_\lambda$ is not immediately generalized to the coideal case because we cannot define the tensor products $\wedge_+^{a} \otimes \wedge_+^{b}$,  $S_+^{a} \otimes S_-^{b}$, etc. in our category.
The next three definitions bypass this difficulty.

Recall from Proposition~\ref{eigenvaluesKi} that $K_i$ has eigenvalues of the form $Q\inv q^{2j}$, $-Qq^{2j}$.

\begin{defn}\label{pluseigen}\label{defppower}
Let $\otimes_+^d$ be the largest quotients of $\otimes^d$ on which each $K_i$ has eigenvalues of the form $Q\inv q^{2j}$. 
Let $\otimes^d_-$ be the largest subfunctor of $\otimes^d$ on which each $K_i$ has eigenvalues of the form $-Qq^{2j}$.
\end{defn}

There is a small problem. The ``positive'' eigenvalues and the ``negative'' eigenvalues are still not well-defined. For example, if $q$ is a primitive $8$th root of unity and $Q=1$ then $Q\inv q^4=-1=-Qq^8$.
To make this definition valid, we need to impose a condition on $q,Q$ which we specify now.

\begin{prop}\label{fdandpmpower}
If 
\begin{equation*}
f_d(Q,q) :=\prod_{i=1-d}^{d-1} (Q^{-2}+q^{2i}) \neq 0, 
\end{equation*} 
then Definition~\ref{pluseigen} is well-defined.
\end{prop}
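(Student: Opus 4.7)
The plan is to extract $\otimes_\pm^d$ from a canonical ``sign" decomposition of $V_n^{\otimes d}$, whose existence reduces to verifying that under the hypothesis $f_d(Q,q)\neq 0$ the candidate ``positive" eigenvalues $\{Q^{-1}q^{2j}\}$ and ``negative" eigenvalues $\{-Qq^{2j}\}$ of each $K_i$ (for $i\leq d$) on $V_n^{\otimes d}$ can be told apart. Once this disjointness is in hand, the definition assembles formally.

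First I would unpack the disjointness statement: a coincidence $Q^{-1}q^{2j_+}=-Qq^{2j_-}$ is equivalent to the vanishing $Q^{-2}+q^{2(j_+-j_-)}=0$. By Proposition~\ref{eigenvaluesKi} the eigenvalues of $K_i$ on $V_n^{\otimes d}$ lie in $\{Q^{-1}q^{2j},-Qq^{2j}\ |\ |j|<i\leq d\}$, so the differences $j_+-j_-$ that can occur lie in the range controlled by the factors of $f_d(Q,q)=\prod_{i=1-d}^{d-1}(Q^{-2}+q^{2i})$; the hypothesis $f_d(Q,q)\neq 0$ is arranged exactly to rule out these identifications. Thus for every $i\leq d$, each actual eigenvalue of $K_i$ on $V_n^{\otimes d}$ is unambiguously of ``positive" form $Q^{-1}q^{2j}$ or of ``negative" form $-Qq^{2j}$, but not both.

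Next, I would combine this with Lemma~\ref{lem:kicommute} (the $K_i$ commute pairwise) and Proposition~\ref{Kidiag} (each $K_i$ acts diagonalizably on $V_n^{\otimes d}$) to simultaneously diagonalize the family $\{K_1,\dots,K_d\}$. The disjointness labels each simultaneous eigenspace by a unique sign sequence $\epsilon=(\epsilon_1,\dots,\epsilon_d)\in\{+,-\}^d$, giving a canonical internal direct sum decomposition
\[V_n^{\otimes d}=\bigoplus_{\epsilon\in\{+,-\}^d}V_n^{\otimes d}(\epsilon),\]
where $V_n^{\otimes d}(\epsilon)$ is the common eigenspace on which $K_i$ acts with an eigenvalue of sign $\epsilon_i$. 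Then $\otimes_+^dV_n$ is realized as the summand $V_n^{\otimes d}(+,\dots,+)$ viewed as a quotient (mod out by the remaining summands), which is evidently the largest quotient on which every $K_i$ has a positive-type eigenvalue; dually, $\otimes_-^dV_n$ is the subspace $V_n^{\otimes d}(-,\dots,-)$.

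Finally, since each $K_i$ lies in $\mathcal{H}_{Q,q}^B(d)$, every morphism $f\in\Hom_{\mathcal{H}_{Q,q}^B(d)}(V_n^{\otimes d},V_m^{\otimes d})$ commutes with all $K_i$ and therefore preserves the sign decomposition. This induces both a quotient map $\otimes_+^dV_n\to\otimes_+^dV_m$ and a restricted map $\otimes_-^dV_n\to\otimes_-^dV_m$, promoting $\otimes_\pm^d$ to polynomial functors in $\mathcal{P}_{Q,q}^d$. The main conceptual step is the disjointness in the second paragraph; once that is secured, the remaining assembly and functoriality are routine.
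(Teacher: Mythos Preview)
Your core reduction to disjointness of the two eigenvalue families is exactly the paper's approach: the paper isolates this as Lemma~\ref{lemma:positivengativeev} and then observes $f_d(Q,q)\neq 0\Rightarrow f_i(Q,q)\neq 0$ for all $i\leq d$.

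There is one overstep. You invoke Proposition~\ref{Kidiag} to simultaneously \emph{diagonalize} the $K_i$, but that proposition is proved in \S\ref{sec:defcoideal} under the standing assumption that $k=\mathbb C$ and $Q,q$ are generic, which is strictly stronger than the hypothesis $f_d(Q,q)\neq 0$ here. The paper does not need diagonalizability for this statement: well-definedness of Definition~\ref{pluseigen} only requires that each actual eigenvalue of $K_i$ be unambiguously ``positive'' or ``negative'', which is precisely the disjointness you establish in your second paragraph. The explicit sign decomposition you build in your third paragraph is extra content; the paper carries this out separately in the next proposition, and there it uses only simultaneous \emph{triangularizability} (from Lemma~\ref{lem:kicommute}) together with generalized eigenspaces, which is available without the generic assumption. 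Replacing ``diagonalize'' by ``triangularize'' and ``eigenspace'' by ``generalized eigenspace'' in your argument fixes this.
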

\begin{proof}
 If $f_d(Q,q)\neq 0$ then $f_i(Q,q)\neq 0$ for all $i\leq d$. The claim follows from the following lemma whose proof is elementary algebra and omitted.
\end{proof}

\begin{lem}\label{lemma:positivengativeev}
The set $\{-Qq^{2j}||j|<i\}$ and the set $\{Q\inv q^{2j}||j|<i\}$ are disjoint if and only if $f_i(Q,q)\neq 0$. 
\end{lem}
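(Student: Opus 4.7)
My plan is to reduce both sides of the lemma to a single algebraic condition of the form ``$Q^{-2}+q^{2k}=0$ for some integer $k$'' in a specified range, and then compare the two ranges.

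On the intersection side, two elements $-Qq^{2j_1}$ and $Q^{-1}q^{2j_2}$ coincide iff $-Q^2q^{2(j_1-j_2)}=1$, equivalently $Q^{-2}+q^{2(j_1-j_2)}=0$. As $j_1, j_2$ range over $\{-(i-1),\ldots, i-1\}$, the integer $k:=j_1-j_2$ ranges over $\{-(2i-2),\ldots,2i-2\}$. On the other hand, directly from the definition $f_i(Q,q)=\prod_{k=1-i}^{i-1}(Q^{-2}+q^{2k})$, the vanishing $f_i(Q,q)=0$ is equivalent to $Q^{-2}+q^{2k}=0$ for some integer $k$ with $|k|\le i-1$.

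The forward implication ``$f_i(Q,q)=0\Rightarrow$ sets intersect'' is then immediate: pick $k_0$ with $|k_0|\le i-1$ and $Q^{-2}+q^{2k_0}=0$; set $j_1=k_0, j_2=0$, and the shared element is $-Qq^{2k_0}=Q^{-1}$. For the reverse implication, an intersection witness yields $Q^{-2}+q^{2k}=0$ with $k=j_1-j_2$, and the main (and only) obstacle is reconciling the a priori range $|k|\le 2(i-1)$ with the range $|k|\le i-1$ of $f_i$. Under the paper's genericity hypothesis on $Q,q$ (which is in force whenever the lemma is invoked via Proposition~\ref{fdandpmpower}, cf.\ Section~\ref{BZstuff}), the equation $Q^{-2}=-q^{2k}$ has no integer solution at all, so both sides of the iff hold vacuously and the range question does not arise; in the non-generic setting, the reduction appears to be the ``elementary algebra'' the authors allude to.
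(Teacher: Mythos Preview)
You correctly reduce both conditions to the solvability of $Q^{-2}+q^{2k}=0$ for $k$ in a specified range, and your proof of the direction ``$f_i(Q,q)=0\Rightarrow$ the sets intersect'' (taking $j_1=k_0$, $j_2=0$) is fine.

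The gap is in the converse. Your escape via genericity is misplaced: the lemma is invoked to prove Proposition~\ref{fdandpmpower}, which is formulated under the \emph{non-generic} hypothesis $f_d(Q,q)\neq 0$; this is exactly Assumption~\ref{universalassumption}, introduced as a weaker substitute for genericity throughout Section~\ref{sec:schur}. So you may not assume $Q^{-2}\neq -q^{2k}$ for all integers $k$, and the range problem does not dissolve.

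In fact the range mismatch you observed is genuine, and the implication ``$f_i\neq 0\Rightarrow$ disjoint'' fails as literally stated. Take $i=2$, $Q=1$, and $q$ a primitive $8$th root of unity. Then $-Qq^{-2}=-q^{-2}=q^{2}=Q^{-1}q^{2}$ (since $q^4=-1$), so the two sets meet; yet $f_2(1,q)=2(1+q^{2})(1+q^{-2})\neq 0$ because $q^{\pm 2}\neq -1$. The paper omits the proof, so there is nothing to compare against; your inability to close the gap is not a defect of your argument but of the statement. A correct version would replace $f_i$ by $f_{2i-1}$ (so that the product ranges over $|k|\leq 2i-2$, matching your range for $j_1-j_2$), or equivalently strengthen Assumption~\ref{universalassumption} to $f_{2d-1}(Q,q)\neq 0$.
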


This lead us to the following assumption which is needed to define the Schur functors and which we impose until the end of the section.

\begin{assumption}\label{universalassumption}
Let $k$ be a field. Let $Q,q\in k^\times$ be such that $f_d(Q,q)\neq 0$.
\end{assumption}

 
If $Q=q=1$, then Assumption \ref{universalassumption} is equivalent to $\operatorname{char}k\neq 2$, which is the classical setting to define the symmetric and exterior power. 
We think of Assumption~\ref{universalassumption} as a correct two-parameter quantization of the assumption $\operatorname{char}k\neq 2$.

The $\otimes^d_\pm$ provide the easiest examples of quantum polynomial functors that do not have an analogue in type A (take $d=1$ for example).

\begin{prop}
The functor $\otimes^d_\pm$ is a direct summand of $\otimes^d$. 
\end{prop}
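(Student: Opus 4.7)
The plan is to realize both functors concretely via a joint eigenspace decomposition for the commuting family $K_1,\dots,K_d$, and then check that the resulting sub- and quotient spaces coincide with $\otimes^d_\pm$ and split off as direct summands.

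First, I would invoke Lemma~\ref{lem:kicommute} and Proposition~\ref{Kidiag} to conclude that $K_1,\ldots,K_d$ are pairwise commuting and each individually diagonalizable on $V_n^{\otimes d}$, hence simultaneously diagonalizable. This yields a decomposition
\[
V_n^{\otimes d}=\bigoplus_{\mathbf a=(a_1,\ldots,a_d)} V_n^{\otimes d}(\mathbf a),
\]
where $V_n^{\otimes d}(\mathbf a)$ is the joint eigenspace on which each $K_i$ acts by the scalar $a_i$. By Proposition~\ref{eigenvaluesKi}, each $a_i$ is either of the form $Q^{-1}q^{2j}$ (call such an eigenvalue positive) or of the form $-Qq^{2j}$ (negative); under Assumption~\ref{universalassumption}, Lemma~\ref{lemma:positivengativeev} makes this classification unambiguous.

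Next, I would set
\[
P(V_n):=\bigoplus_{\mathbf a \text{ all positive}} V_n^{\otimes d}(\mathbf a),\qquad N(V_n):=\bigoplus_{\mathbf a \text{ all negative}} V_n^{\otimes d}(\mathbf a),
\]
together with their complementary summands $P'(V_n):=\bigoplus_{\exists i:\ a_i\text{ negative}} V_n^{\otimes d}(\mathbf a)$ and $N'(V_n):=\bigoplus_{\exists i:\ a_i\text{ positive}} V_n^{\otimes d}(\mathbf a)$. Since every morphism in $\mathcal C^B_d$ is an $\mathcal H^B_{Q,q}(d)$-module map and therefore commutes with each $K_i\in\mathcal H^B_{Q,q}(d)$, such a morphism preserves joint eigenspaces. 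This makes the assignments $V_n\mapsto P(V_n), N(V_n), P'(V_n), N'(V_n)$ into subfunctors of $\otimes^d$, and gives functorial splittings $\otimes^d\cong P\oplus P'$ and $\otimes^d\cong N\oplus N'$.

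Finally, I would identify $P\cong\otimes^d_+$ and $N\cong\otimes^d_-$. For $N$: on $N(V_n)$ each $K_i$ acts only with negative eigenvalues, and conversely any vector on which every $K_i$ acts only with negative eigenvalues must, by diagonalizability and the disjointness of positive and negative sets given by Lemma~\ref{lemma:positivengativeev}, lie in $N(V_n)$; so $N$ is the maximal subfunctor of $\otimes^d$ with this property. For $P$: the projection $\otimes^d\twoheadrightarrow P$ has kernel $P'$, which is the sum of joint eigenspaces containing at least one negative $K_i$-eigenvalue; any quotient of $\otimes^d$ on which each $K_i$ acts only with positive eigenvalues must annihilate each such eigenvector (again by diagonalizability and disjointness) and therefore factors uniquely through $P$. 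This identifies $P\cong\otimes^d_+$ and $N\cong\otimes^d_-$, and both are thus direct summands of $\otimes^d$. The proof is essentially bookkeeping; the essential inputs are the simultaneous diagonalizability supplied by Proposition~\ref{Kidiag} and Assumption~\ref{universalassumption}, which via Lemma~\ref{lemma:positivengativeev} cleanly separates the two classes of eigenvalues and is the only point where a genuine hypothesis is needed.
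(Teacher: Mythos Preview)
Your approach is essentially the paper's, but there is a gap in the hypothesis tracking. Proposition~\ref{Kidiag} is stated and proved in \S\ref{sec:defcoideal} under the standing assumption there that $k=\mathbb C$ and $Q,q$ are generic; its proof uses the multiplicity-free bimodule decomposition~\eqref{eq:SWdecompositionB}, which needs $\HB$ to be semisimple. The present proposition, however, sits only under the weaker Assumption~\ref{universalassumption} ($f_d(Q,q)\neq 0$), which allows $q$ to be a root of unity or $\operatorname{char}k>0$. In that generality the $K_i$ need not be individually diagonalizable, so the joint eigenspace decomposition you write down is not available.

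The fix is exactly what the paper does: replace eigenspaces by \emph{generalized} eigenspaces throughout. Since the $K_i$ commute (Lemma~\ref{lem:kicommute}) they are simultaneously triangularizable, and $V_n^{\otimes d}$ decomposes as a direct sum of joint generalized eigenspaces. Assumption~\ref{universalassumption} via Lemma~\ref{lemma:positivengativeev} still cleanly separates the positive from the negative eigenvalue sets, so the summand on which every $K_i$ has only positive (resp.\ only negative) generalized eigenvalues is well-defined and splits off. Morphisms in $\mathcal C^B_d$ commute with each $K_i$ and hence preserve generalized eigenspaces, giving the functorial splitting. With this single modification your argument coincides with the paper's proof.
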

\begin{proof}
The (evaluation at $V_n$ of the) functor $\otimes^d$ decomposes into generalized eigenspaces for $K_i$, in particular, into (generalized) ``positive'' eigenspaces and ``negative'' eigenspaces.
Since all $K_i$ commute (see Lemma~\ref{lem:kicommute}), their actions on $\otimes^d$ are simultaneously triangularizable. 
Such a triangularization realizes $\otimes^d_\pm$ as a direct summand of $\otimes^d$.
\end{proof}

Since $\otimes^d_\pm$ is a direct summand of $\otimes^d$, we have the projections and inclusions
\begin{equation}\label{eq:pi}
\begin{split}
& p_\pm : \otimes^d\to \otimes^d_\pm, \\
& i_\pm  : \otimes^d_\pm\to \otimes^d.
\end{split}
\end{equation}
whose names will be repeatedly abused throughout the section: we denote by $p_\pm$ any projection that is induced by $p_\pm$ by a pushout diagram.
We can show:
\begin{lem}\label{purelation}
$p_{\pm} (V_n) = V_n^{\otimes d} u^\pm_d$.
\end{lem}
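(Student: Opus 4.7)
The plan is to prove the $+$-statement; the $-$-case follows by the symmetric argument, exchanging the roles of $-Q$ and $Q^{-1}$ (and of positive versus negative eigenvalues). Since $u_d^+=\prod_{j=1}^d(K_j+Q)$ lies in the commutative subalgebra of $\HB$ generated by $K_1,\ldots,K_d$ (Lemma~\ref{lem:kicommute}) and the $K_j$ act simultaneously diagonalizably on $V_n^{\otimes d}$ (Proposition~\ref{Kidiag}), $V_n^{\otimes d}$ decomposes into simultaneous eigenspaces $E_\lambda=E_{(\lambda_1,\ldots,\lambda_d)}$ on which $u_d^+$ acts by the scalar $\prod_j(\lambda_j+Q)$. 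Thus $V_n^{\otimes d}u_d^+$ is the sum of those $E_\lambda$ with every $\lambda_j\neq -Q$, while $p_+(V_n)$ is the sum of those with every $\lambda_j$ positive (i.e.\ of the form $Q^{-1}q^{2j'}$). The easy inclusion $p_+(V_n)\subseteq V_n^{\otimes d}u_d^+$ is then routine: for positive $\lambda_j$, $\lambda_j+Q=Q^{-1}(q^{2j'}+Q^2)$ is nonzero under Assumption~\ref{universalassumption}, so $u_d^+$ acts invertibly on $p_+(V_n)$.

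The opposite inclusion reduces to the Key Claim: \emph{any nonzero $E_{(\lambda_1,\ldots,\lambda_d)}$ whose entries all avoid $-Q$ has every entry positive.} I would argue this by induction on $d$. The base case $d=1$ is immediate from the Hecke relation $(T_0+Q)(T_0-Q^{-1})=0$, which forces $\lambda_1\in\{Q^{-1},-Q\}$. For the inductive step, since $K_1,\ldots,K_{d-1}$ act through only the first $d-1$ tensor factors, the tuple $(\lambda_1,\ldots,\lambda_{d-1})$ arises as a simultaneous eigenvalue of $(K_1,\ldots,K_{d-1})$ on $V_n^{\otimes(d-1)}$, and the induction hypothesis makes $\lambda_1,\ldots,\lambda_{d-1}$ positive. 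The remaining question --- whether $\lambda_d$ is positive --- is to be settled by applying Lemma~\ref{newlemma} to $(K_{d-1},K_d)$ on any $v\in E_\lambda$.

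The main subtlety, and the essential point of the argument, is what to do when Lemma~\ref{newlemma}'s second alternative $\lambda_d=q^{\pm 2}\lambda_{d-1}$ fails (that alternative alone already gives $\lambda_d$ positive since $\lambda_{d-1}$ is). In the opposite case the proof of Lemma~\ref{newlemma} furnishes an explicit nonzero vector $w=(q^{-1}-q)\lambda_d v+(\lambda_{d-1}-\lambda_d)T_{d-1}v$ satisfying $K_{d-1}w=\lambda_d w$ and $K_d w=\lambda_{d-1}w$. The hard part is then to recognize that $w$ is in fact a simultaneous eigenvector for \emph{all} of $K_1,\ldots,K_d$: since $T_{d-1}$ commutes with $K_i$ for $i\neq d-1,d$ by~\cite[Prop.~2.1]{DipperJamesMathas}, both $v$ and $T_{d-1}v$ are $K_i$-eigenvectors with eigenvalue $\lambda_i$ for $i\leq d-2$, and therefore $w\in E_{(\lambda_1,\ldots,\lambda_{d-2},\lambda_d,\lambda_{d-1})}$. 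Applying the induction hypothesis to the first $d-1$ entries of this new tuple --- all distinct from $-Q$ by assumption --- then forces $\lambda_d$ to be positive, closing the induction.
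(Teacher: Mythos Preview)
Your proof is correct and uses essentially the same core idea as the paper: both arguments reduce the equality to the Key Claim (that a simultaneous eigenvalue tuple avoiding $-Q$ must be entirely positive) and both establish it via the exchange trick of Lemma~\ref{newlemma}, propagating a hypothetical negative eigenvalue toward position $1$ to reach a contradiction with $K_1=T_0$. The only difference is organizational: the paper carries out an explicit descent, building vectors $w_{j-1},w_{j-2},\ldots,w_1$ step by step, whereas you package the same descent as an induction on $d$, applying the exchange once and then invoking the hypothesis on $V_n^{\otimes(d-1)}$. Your inductive packaging is arguably cleaner, since the verification that each intermediate $w_i$ remains a simultaneous eigenvector (and is nonzero) is absorbed into a single invocation of the hypothesis rather than repeated at every step.
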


\begin{proof}
Recall that $V_n\td$ decomposes into simultaneous eigenspaces for $K_i$, $i=1,\cdots,d$.
Using Assumption~\ref{universalassumption} and Lemma~\ref{lemma:positivengativeev}, we say an eigenvalue (of some $K_i$) is positive if it is of the form $Q\inv q^{2j}$ and negative if it is of the form $-Qq^{2j}$. Then we can say $p_+V_n\td$ is the positive eigenspace of $V_n\td$.
The image of $u_d^+ = \prod_{j=1}^{d} (K_j + Q)$ acting on $V_n\td$ by definition annihilates all $-Q$-eigenvectors of $K_i$, for any $i$. Therefore we have $p_{\pm} (V_n) \subseteq V_n^{\otimes d} u^\pm_d$.  

For the opposite inclusion we argue by contradiction. Recall the $K_i$'s commute with each other. Suppose there is $v\in V_n\td u^+_d$, an eigenvector for all $K_i$, which has a negative eigenvalue for some $K_i$. Let $j$ be the smallest such $i$, and (by Proposition~\ref{eigenvaluesKi}) let $m$ be an integer such that $vK_j=-Qq^{2m}v$.
Let $a$ be the eigenvalue of $K_{j-1}$ for $v$. By assumption, $a$ is positive, in particular is not of the form $-Qq^{2m'}$. Thus the vector
$w_{j-1}=(q\inv-q)(-Qq^{2m})v+(a+Qq^{2m})T_{j-1}v$ (see Lemma~\ref{newlemma} and its proof)
is in the $-Qq^{2m}$-eigenspace for $K_{j-1}$. The vector $w_{j-1}$ is not necessarily in $V_n^{\otimes d} u^\pm_d$, we do not require it to be.
Note that $w_{j-1}$ is again a simultaneous eigenvector for all $K_i$. Now construct for $j-2 \geq i \geq1$ the vector
$w_i=(q\inv-q)(-Qq^{2m})w_{i+1}+(a_i+Qq^{2m})T_{i}w_{i+1}$, where  $a_iw_{i+1}=w_{i+1}K_i$, inductively.
Then each $w_i$ is an $-Qq^{2m}$-eigenvector for $K_i$.
Since the only eigenvalues of $K_1$ are $-Q$ and $Q\inv$, its eigenvalue at $w_1$ needs to be $-Q=-Qq^{2m}$, that is $m=0$.
But this means $vK_j=-Qq^{2m}v = -Qv$, which contradicts $v\in V_n\td u^+_d$.

A similar argument works for $p_-$.
\end{proof}

Now we relate the $\otimes^d_\pm$ with the $\pm$-symmetric/exterior powers.

\begin{prop}\label{pmands6}
We have the pushout diagrams
\begin{equation}
\begin{aligned}[c]
\begin{tikzcd}[row sep=3em, column sep=4em]
\otimes^d \arrow[r]\arrow[d,twoheadrightarrow]\arrow[r,phantom, very near start]& \otimes^d_{\pm} \arrow[d,twoheadrightarrow,""] \\
S^d \arrow[r,"p_\pm"]& S^d_\pm
\end{tikzcd} 
\qquad \qquad \qquad
\begin{aligned}
\end{aligned}
\begin{tikzcd}[row sep=3em, column sep=4em]
\otimes^d \arrow[r]\arrow[d,twoheadrightarrow]\arrow[r,phantom, very near start]& \otimes^d_{\pm} \arrow[d,twoheadrightarrow,""] \\
\wedge^d\arrow[r,"p_\pm"]& \wedge^d_\pm
\end{tikzcd}
\end{aligned}
\end{equation}
 \end{prop}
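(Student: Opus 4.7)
The plan is to identify both sides of each pushout square as explicit quotients of $\otimes^d$. Pushouts in $\mathcal{P}_{Q,q}^d$ are computed pointwise, and both maps out of $\otimes^d V_n$ in each square are surjections, so the pushout is
\[
\otimes^d V_n\big/\bigl(\ker(\otimes^d V_n\twoheadrightarrow S^d V_n)+\ker(p_\pm)\bigr),
\]
and analogously with $\wedge^d$ in place of $S^d$. The task thus reduces to showing that this sum of kernels equals $\ker(\otimes^d V_n\twoheadrightarrow S^d_\pm V_n)$ (respectively $\ker(\otimes^d V_n\twoheadrightarrow \wedge^d_\pm V_n)$).

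I will handle the $S^d_+$ case in detail; the others follow by symmetric arguments. For the containment $\subseteq$, relations in $\ker(\otimes^d \twoheadrightarrow S^d)$ are automatically $S^d_+$-relations, while $\ker(p_+)$ maps to zero in $S^d_+$ because $S^d_+$ is itself a quotient of $\otimes^d_+$. Indeed, on $S^d_+$ each $K_i = T_{i-1}\cdots T_1 T_0 T_1\cdots T_{i-1}$ acts as the positive scalar $q^{-2(i-1)}Q^{-1}$, so the projection $\otimes^d\twoheadrightarrow S^d_+$ factors through $\otimes^d_+$. For the reverse containment, it suffices to show $(T_0-Q^{-1})w \in \ker(p_+)$ for all $w\in V_n\td$. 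The Hecke quadratic relation $(K_1+Q)(K_1-Q^{-1})=0$ forces the image of $T_0-Q^{-1}=K_1-Q^{-1}$ to lie in the $-Q$-eigenspace of $K_1$. A direct matrix computation shows that $K_Q$ is diagonalizable on $V_n$ (its two eigenvalues $Q^{-1}$ and $-Q$ are distinct under Assumption~\ref{universalassumption}, because the $i=0$ factor of $f_d(Q,q)$ gives $Q^{-2}\neq -1$), hence $K_1=T_0$ is diagonalizable on $V_n\td$. Any $-Q$-eigenvector of $K_1$ thus decomposes into simultaneous generalized eigenvectors for all the (mutually commuting) $K_i$'s, with each summand still having $K_1$-eigenvalue $-Q$; such summands belong to the negative simultaneous generalized eigenspace, which by the direct-summand decomposition of $\otimes^d$ is precisely $\ker(p_+)$.

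For $S^d_-$ one instead uses $(K_1-Q^{-1})(K_1+Q)=0$ to place $(T_0+Q)w$ in the $Q^{-1}$-eigenspace of $K_1$, which lies in $\ker(p_-)$. The $\wedge^d_\pm$ cases run identically after replacing the $T_i$-action ($i>0$) by $-q$, so that $K_i=q^{2(i-1)}T_0$ and the analysis for $T_0$ is unchanged. The main subtle point throughout is the gap between the ordinary eigenspace supplied by the Hecke quadratic relation and the \emph{generalized} simultaneous eigenspace defining $\ker(p_\pm)$; this gap is bridged by the diagonalizability of $K_1$ established above.
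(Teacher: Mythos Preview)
Your proof is correct and follows essentially the same idea as the paper's: both compute that each $K_i$ acts on $S^d_+V_n$ (respectively $\wedge^d_\pm V_n$, $S^d_-V_n$) by the explicit positive (respectively negative) scalar $q^{\mp 2(i-1)}Q^{-1}$ or $-q^{\mp 2(i-1)}Q$, so that the quotient map $\otimes^d\twoheadrightarrow S^d_+$ factors through $\otimes^d_+$. The paper's argument stops at this point, treating the reverse containment as implicit; your treatment is more careful in that you explicitly show $(T_0-Q^{-1})w\in\ker p_+$ using the diagonalizability of $K_1$ under Assumption~\ref{universalassumption}, which is exactly the step needed to conclude that the square is a pushout and not merely commutative.
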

\begin{proof}
We prove this for $S_+^d$. Since each $T_i$ with $i>0$ acts on $S_+^dV_n$ as $q\inv$, if $K_0$ acts as $Q\inv$ then $K_i$ acts as $q^{-2i+1}Q\inv$. So each $(K_i-Q)$ is invertible on $S_+^dV_n$.
\end{proof}

Proposition \ref{pmands6} suggests the following definition.
\begin{defn}
We define $S^\lambda_\pm$, $\wedge^\lambda_\pm$ by the pushout diagrams
\begin{equation}
\begin{aligned}[c]
\begin{tikzcd}[row sep=3em, column sep=4em]
\otimes^d \arrow[r]\arrow[d,twoheadrightarrow]\arrow[r,phantom, very near start]& \otimes^d_{\pm} \arrow[d,twoheadrightarrow,""] \\
S^\lambda\arrow[r,"p_\pm"]& S^\lambda_\pm
\end{tikzcd} 
\qquad \qquad \qquad
\begin{aligned}
\end{aligned}
\begin{tikzcd}[row sep=3em, column sep=4em]
\otimes^d \arrow[r]\arrow[d,twoheadrightarrow]\arrow[r,phantom, very near start]& \otimes^d_{\pm} \arrow[d,twoheadrightarrow,""] \\
\wedge^\lambda\arrow[r,"p_\pm"]& \wedge^\lambda_\pm
\end{tikzcd}
\end{aligned}
\end{equation}
\end{defn}

Let us construct an analogue of the tensor product $\otimes_+^a$ with $\otimes_-^b$ that is a polynomial functor in $\mathcal{P}_{Q,q}^d$.
Since  $\mathcal{P}_{Q,q}$ is a right module category over $\mathcal{AP}_{q}$, we can form $\otimes^b_-\otimes \otimes^a$ and $ \otimes^a_+\otimes \otimes^b$ in $\mathcal{P}_{Q,q}$.

\begin{defn}\label{defpmpower}
The signed tensor power $\pmpower$ is the image of the map
\[\otimes^b_-\otimes \otimes^a\xrightarrow{T_{b,a}} \otimes^a_+\otimes \otimes^b.\]
\end{defn}
By the previous definition and Lemma~\ref{purelation} we have 
\[\pmpower (V_n)=V_n\td   u_b^{-}T_{b,a}u_a^+. \]
With the help of Definition \ref{defpmpower}, we define $S^{(\lambda,\mu)}$ and $\wedge^{(\lambda,\mu)}$:

\begin{defn}
Let $S^{(\lambda,\mu)}$ be the image of the map
\[S^\mu_-\otimes S^\lambda\xrightarrow{T_{b,a}\circ (i_- \otimes \id)} S^\lambda\otimes S^\mu\xrightarrow{p_+\otimes \id} S^\lambda_+\otimes S^\mu;\]
and let $\wedge^{(\lambda,\mu)}$ be the image of the map
\[\wedge^\mu_-\otimes \wedge^\lambda\xrightarrow{T_{b,a}\circ (i_- \otimes \id)} \wedge^\lambda\otimes \wedge^\mu\xrightarrow{p_+\otimes \id} \wedge^\lambda_+\otimes \wedge^\mu.\]
Note that the tensor products of the objects and maps are well-defined because $\mathcal{P}_{Q,q}$ is a module category over the monoidal category $\mathcal{AP}_{q}$ as shown in Section~\ref{sec:action}. 

In other words, we have the following commutative diagrams where the left faces are the definition of $\pmpower$, and the right faces are the definitions of $S^{(\lambda, \mu)}$ and $\wedge^{(\lambda, \mu)}$, respectively.

\begin{equation}\label{sym}
\begin{tikzcd}[row sep=3em, column sep=4em]
& \otimes^b_- \otimes \otimes^a \arrow[rr]  \arrow[dd,hook,pos=0.7,"T_{b,a}"]&  & S^\mu_- \otimes S^\lambda  \arrow[dl,pos=0.7,twoheadrightarrow] \arrow[dd,hook,pos=0.7,"T_{b,a}"] \\
^a_+\otimes^b_- \arrow[rr,crossing over] \arrow[dd,hook] \arrow[from=ur,twoheadrightarrow]& & S^{(\lambda,\mu)} & \\
& \otimes^{a+b}  \arrow[rr] \arrow[dl,twoheadrightarrow]& & S^\lambda \otimes S^\mu  \arrow[dl,twoheadrightarrow] \\
\otimes_+^a\otimes \otimes^b  \arrow[rr] & & S^{\lambda}_+\otimes S^\mu \arrow[from=uu,hook,crossing over] &  \\
\end{tikzcd}
\end{equation}

\begin{equation}\label{wedge}
\begin{tikzcd}[row sep=3em, column sep=4em]
& \otimes^b_- \otimes \otimes^a \arrow[rr]  \arrow[dd,hook,pos=0.7,"T_{b,a}"]&  & \wedge^\mu_- \otimes \wedge^\lambda  \arrow[dl,pos=0.7,twoheadrightarrow] \arrow[dd,hook,pos=0.7,"T_{b,a}"] \\
^a_+\otimes^b_- \arrow[rr,crossing over] \arrow[dd,hook] \arrow[from=ur,twoheadrightarrow]& & \wedge^{(\lambda,\mu)} & \\
& \otimes^{a+b}  \arrow[rr] \arrow[dl,twoheadrightarrow]& & \wedge^\lambda \otimes \wedge^\mu  \arrow[dl,twoheadrightarrow] \\
\otimes_+^a\otimes \otimes^b  \arrow[rr] & & \wedge^{\lambda}_+\otimes \wedge^\mu \arrow[from=uu,hook,crossing over] &  \\
\end{tikzcd}
\end{equation}
\end{defn}

We have $\wedge^{(\lambda,\mu)}\in \mathcal P_{Q,q}$ and $S^{(\lambda,\mu)}\in \mathcal P_{Q,q}$.
Note that if $Q=q=1$, we have 
\[\wedge^{(\lambda,\mu)} \cong \wedge_+^{\lambda_1}\otimes\cdots\otimes\wedge_+^{\lambda_r}\otimes\wedge_-^{\mu_1}\otimes\cdots\otimes\wedge_-^{\mu_r}\] and 
\[S^{(\lambda,\mu)} \cong S_+^{\lambda_1}\otimes\cdots\otimes S_+^{\lambda_r}\otimes S_-^{\mu_1}\otimes\cdots\otimes S_-^{\mu_r}.\] 
Thus we may think of $\wedge^{(\lambda,\mu)}$ and $S^{(\lambda,\mu)}$ as deformed tensor products which are not tensor products in the usual sense, but devolve to the usual tensor product when $Q,q=1$. 

\begin{ex}
(d=2) We have
\[\otimes^2=S^{((1,1),0)}\oplus S^{((1),(1))}\oplus X \oplus S^{(0,(1,1))}\]
where $X$ is isomorphic to $S^{((1),(1))}$ and can for example be taken to be $V\otimes V/(T_0+Q,T_1T_0T_1-Q\inv)$ (here we want a strict decomposition, not up to isomorphism).  Note that for the bipartitions appearing here, there is no difference between $S$ and $\wedge$ (so we could have replaced $S^{((1),(1))}$ by $\wedge^{((1),(1))}$ in the equation above).
Furthermore, there is a decomposition
\[S^{(0,(1,1))}=\wedge^{(0,(1,1))}=\wedge^2_-\oplus S^2_-\]
and \[S^{((1,1),0)}=\wedge^{((1,1),0)}=\wedge^2_+\oplus S^2_+\]
into direct sum of irreducibles.
\end{ex}

{\scriptsize
\begin{figure}
\begin{tikzcd}[cramped,row sep=7.5em, column sep=1.2em]
& \otimes^b_-\otimes \otimes^a \arrow[dl,hook,] \arrow[dr,twoheadrightarrow,] \arrow[rrr,]  \arrow[rrrrrrrrr,bend left=20] 
& & &S^\mu_- \otimes S^\lambda   \arrow[dr,twoheadrightarrow]
\arrow[dl,hook,crossing over]
& & &\arrow[lll,hook,] S_{(\varnothing,\mu)} \otimes S_{\lambda} \arrow[dl,hook] \arrow[dr,twoheadrightarrow]
& & & \arrow[lll, twoheadrightarrow] \wedge^{\mu'}_- \otimes \wedge^{\lambda'} \arrow[dl,hook] \arrow[dr,twoheadrightarrow] &  \\
\otimes^{a+b}  \arrow[rrrrrrrrr,bend left=20,crossing over, end anchor=north] \arrow[dr,twoheadrightarrow,pos=0.6] & &_+^a\otimes^b_-  \arrow[rrrrrrrrr,bend right=30,crossing over] \arrow[dl, hook] \arrow[rrr,bend left=30,crossing over]  &\arrow[from=lll,bend right=25, end anchor=south,crossing over,pos=0.1,below,]  
S^\lambda \otimes S^\mu  \arrow[dr,crossing over,twoheadrightarrow,pos=.6] \arrow[from=rrr,bend right=30,crossing over,hook,] &  &
S^{(\lambda,\mu)}  \arrow[dl,crossing over,hook] 
& S_\lambda \otimes S_\mu  \arrow[dr,twoheadrightarrow,crossing over,pos=0.6] &  & S_{(\lambda,\mu)} \arrow[dl,hook,crossing over]  \arrow[lll,bend left=30,crossing over,hook,pos=0.7] & \wedge^{\lambda'} \otimes \wedge^{\mu'} \arrow[dr,twoheadrightarrow,crossing over] \arrow[lll,bend right=20,crossing over,twoheadrightarrow,"s_\lambda^A\otimes s_\mu^A"] & &  \wedge^{(\lambda', \mu')} \arrow[dl,hook, start anchor=south]  \arrow[lll,bend left=20,crossing over, start anchor=south, twoheadrightarrow,pos=0.65, start anchor=south west] \\
& \otimes_+^a\otimes \otimes^b \arrow[rrrrrrrrr,bend right=20] \arrow[rrr]& & & S^\lambda_+\otimes S^\mu& & &\arrow[lll] S_{(\lambda,\varnothing)}\otimes S_\mu & & &\arrow[lll] \wedge^{\lambda'}_+ \otimes \wedge^{\mu'} &  \\
\end{tikzcd}
\caption{The diagram above consists of four diamonds and maps between them is used to define the Schur functor $S_{(\lambda,\mu)}$. }\label{smallvalentinstear}
\end{figure}
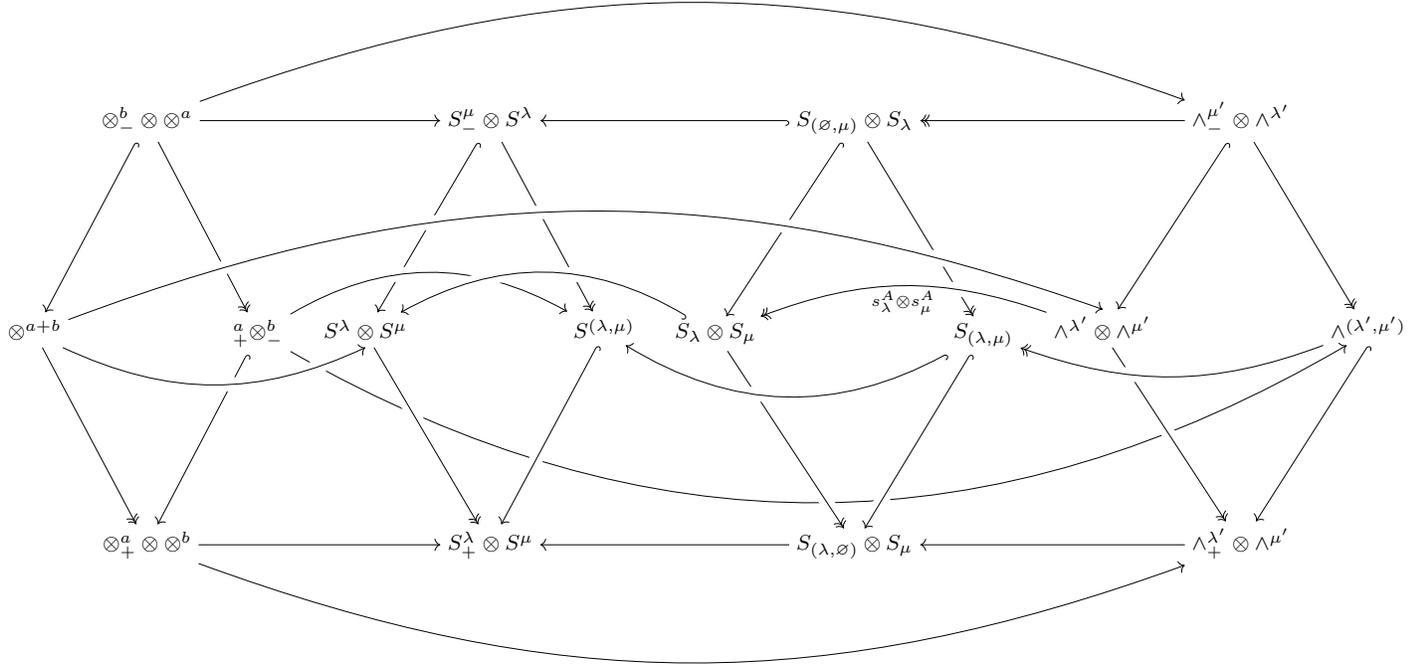
}

\begin{defn}\label{def:schurfunctor}
The Schur functor $S_{(\lambda,\mu)}$ is defined in the commutative diagram in Figure~\ref{smallvalentinstear}.
The two leftmost diagrams form a subdiagram equivalent to the diagram in \eqref{sym}, while the leftmost and rightmost diamonds form a subdiagram equivalent to the diagram in \eqref{wedge}. 
The rightward maps are induced from the definitions of symmetric and exterior power; the diamonds are induced from the definition of $\pmpower$. See also the diagrams \eqref{sym}, \eqref{wedge} which are subdiagrams of the diagram in Figure~\ref{smallvalentinstear}.
Then the leftward maps are induced from the map $s_\lambda^A\otimes s_\mu^A$ where $s^A_\lambda$ from \eqref{schurfunctor} defines the type A Schur functors. 

In particular, the Schur functor $S_{(\lambda,\mu)}$ can be defined as the image of the map:
\begin{equation}\label{eq:schurexactsequence}
\wedge^{(\lambda',\mu')} \xrightarrow{c^B_{(\lambda,\mu)}} \: ^a_+\otimes^b_- \xrightarrow{} S^{(\lambda, \mu)},
\end{equation}
where the right map is the projection in the diagram in Figure~\ref{smallvalentinstear} and the left map is induced from the map $c^A_\lambda \otimes c^A_\mu$ defined in equation~\eqref{wedgeinclusion}, where $c^A_\lambda = T_{c(\lambda)}\iota_{\lambda'}$ (see~\eqref{wedgeinclusionexplicitformula} and after).

\begin{equation}\label{wedgeinclusion}
\begin{tikzcd}[row sep=2em, column sep=3em]
& \wedge^{\lambda'}\otimes \wedge^{\mu'} \arrow[rr,hook,"c^A_\lambda \otimes c^A_\mu"]  \arrow[dd,pos=0.7]&  & \otimes^\lambda \otimes \otimes^\mu = \otimes^d  \arrow[dl,pos=0.7] \arrow[dd,pos=0.7] \\
\wedge_+^{\lambda'} \otimes \wedge^{\mu'} \arrow[rr,crossing over,hook] \arrow[dd] \arrow[from=ur]& & \otimes_+^a\otimes \otimes^b & \\
& \wedge^{\mu'}_- \otimes \wedge^{\lambda'}  \arrow[rr,hook] \arrow[dl]& & \otimes_-^b\otimes \otimes^a  \arrow[dl] \\
\wedge^{(\lambda',\mu')}  \arrow[rr,hook,"c^B_{(\lambda,\mu)}"] & & ^a_+\otimes^b_- \arrow[from=uu,crossing over] &  \\
\end{tikzcd}
\end{equation}

\end{defn}

\begin{ex}\label{extremeschurfunctor}
For $\varpi_d=(1,\cdots,1,0,\cdots,0)$, 
we have $S_{(\varpi_d,0)}=\wedge^d_+$ and $S_{(0,\varpi_d)}=\wedge^d_-$. 
For $d\varpi_1=(d,0,\cdots,0)$, we have $S_{(d\varpi_1,0)}=S^d_+$ and $S_{(0,d\varpi_1)}=S^d_-$.
\end{ex}

\subsection{Schur functors in generic case}

In this subsection, we relate the Schur functors with the Young symmetrizers in \S~\ref{subsec:youngsym}. For this, it is necessary to assume that $k=\mathbb C$ and $Q,q$ are generic.

\begin{prop}\label{lem:schuryoungsymsame}
We have for each $n$, $\lambda,\mu$
\[S_{(\lambda,\mu)}(V_n)\cong (V_n\td) e'_{\lambda,\mu}\] as $\SB(n;d)$-modules where $e'_{\lambda,\mu}$ is the Young symmetrizer defined in \eqref{defe'}. 
\end{prop}
\begin{proof}
The projection
$\otimes^d \to S_\lambda \otimes S_\mu$ is isomorphic to (acting with) the Young symmetrizer $e_\lambda \otimes e_\mu =(e_\lambda\otimes \id) (\id \otimes e_\mu)$. The projection $\otimes^d\xrightarrow{u_b^-T_{b,a}u^+_a} (\pmpower)$ from Definition \ref{defpmpower} is isomorphic to multiplication by $e_{a,b}$ from equation~\eqref{eq:eab}. By Lemma~\ref{purelation} we have that $\oplus^d_\pm=\oplus^du^d_\pm$.


The claim now follows from the Definition in Figure~\ref{smallvalentinstear} (note specifically the implicit square containing $\otimes ^d, ^a_+\otimes^b_-, S_\lambda \otimes S_\mu, S_{(\lambda,\mu)}$) and the fact that $e_{a,b}$ and $e_\lambda \otimes e_\mu$ are idempotents and commute.
\end{proof}

\begin{ex}\label{d=2schur}
($d=2$) There are five bipartitions $(\lambda,\mu)\vdash 2$, namely $((1,1),0)$, $(0,(1,1))$, $((2),0)$, $(0,(2))$, $((1),(1))$. 
The only case that is not covered in Example~\ref{extremeschurfunctor} is $((1),(1))$.
A defining sequence in this case is \[\wedge^{((1),(1))}\to \otimes^2\to S^{((1),(1))}.\]
One sees from the definition that $\wedge^{((1),(1))}=\ ^1_+\otimes^1_-=S^{((1),(1))}$ and that the composition is an isomorphism, hence we have $S_{((1),(1))}=S^{((1),(1))}\cong \wedge^{((1),(1))}$.
Thanks to the Schur-Weyl duality, we know that $\otimes^2$ has four distinct irreducible summands with multiplicity one and a unique (up to isomorphism) irreducible summand with multiplicity 2. The former correspond to $((1,1),0)$, $(0,(1,1))$, $((2),0)$, $(0,(2))$ and the latter is necessarily isomorphic to $S_{((1),(1))}$.
\end{ex}

Example \ref{d=2schur} generalizes to give the following description/classification of the irreducible polynomial functors in $\mathcal{P}_{Q,q}$.

\begin{thm}\label{thm:schurfunctortheorem}
The Schur functors $S_{(\lambda,\mu)}$ are irreducible, mutually non-isomorphic, and form a complete list of irreducibles in $\mathcal{P}_{Q,q}$. 
\end{thm}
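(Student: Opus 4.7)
The plan is to reduce the statement to facts already established in the generic setting via the representability theorem, and then invoke the identification of Schur functors with Young symmetrizers.

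First, by Theorem \ref{thm:representabilityss}, for $n \geq 2d$ the evaluation functor $F \mapsto F(V_n)$ gives an equivalence $\mathcal{P}_{Q,q}^d \simeq S_{Q,q}^B(n;d)\text{-mod}$. Under this equivalence, it suffices to show that the modules $S_{(\lambda,\mu)}(V_n)$, for $(\lambda,\mu) \vdash_n d$, form a complete set of pairwise non-isomorphic irreducible $S_{Q,q}^B(n;d)$-modules for some (equivalently, every) $n \geq 2d$.

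Next, I would invoke Proposition \ref{lem:schuryoungsymsame}, which provides the identification
\[
S_{(\lambda,\mu)}(V_n) \cong V_n^{\otimes d} e'_{\lambda,\mu}
\]
as $S_{Q,q}^B(n;d)$-modules, where $e'_{\lambda,\mu}$ is the modified Young symmetrizer from \eqref{defe'}. Combined with Proposition \ref{e'givesirred}, this yields $S_{(\lambda,\mu)}(V_n) \cong L_{\lambda,\mu}(n)$, the irreducible $U_{Q,q}^B$-module (equivalently, irreducible $S_{Q,q}^B(n;d)$-module, by Remark \ref{coidealschur}) labelled by the bipartition $(\lambda,\mu)$.

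Finally, I would appeal to the Schur-Weyl bimodule decomposition \eqref{eq:SWdecompositionB}
\[
V_n^{\otimes d} \cong \bigoplus_{(\lambda,\mu)\vdash_n d} L_{\lambda,\mu}(n) \otimes M_{\lambda,\mu},
\]
which in the generic case (where the Hecke algebra $\HB$ is semisimple) exhibits the $L_{\lambda,\mu}(n)$ as a complete, multiplicity-free, pairwise non-isomorphic set of irreducible $S_{Q,q}^B(n;d)$-modules for $n \geq 2d$. Pulling this classification back through the equivalence of Theorem \ref{thm:representabilityss} gives the claim for $\mathcal{P}_{Q,q}$. In effect there is no single hard step: the theorem is a synthesis, and the real content has already been unloaded into Proposition \ref{lem:schuryoungsymsame}, whose proof handled the delicate comparison between the diagrammatic definition in Figure \ref{smallvalentinstear} and the explicit Young symmetrizer $e'_{\lambda,\mu}$. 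The only point requiring care here is to verify that the indexing set $(\lambda,\mu) \vdash_n d$ coming from Schur-Weyl matches the indexing of Schur functors $S_{(\lambda,\mu)}$, which one can do by taking $n$ large enough that no $L_{\lambda,\mu}(n)$ vanishes.
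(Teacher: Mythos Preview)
Your proposal is correct and follows essentially the same route as the paper's own proof, which simply cites Proposition~\ref{lem:schuryoungsymsame}, Proposition~\ref{e'givesirred}, and the representability theorem (the paper uses Theorem~\ref{thm:representability} rather than Theorem~\ref{thm:representabilityss}, but either works in the generic setting). Your version is more detailed in spelling out the role of the Schur--Weyl decomposition \eqref{eq:SWdecompositionB}, but the logical structure is identical.
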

\begin{proof}
 The claim follows from Proposition~\ref{lem:schuryoungsymsame}, Proposition~\ref{e'givesirred} and Proposition~\ref{thm:representability}.
\end{proof}

\begin{rem} 
We have that $S_{(\lambda,\mu)}(V_n) = L_{\lambda,\mu}(n)$. By \cite[Theorem 3.1.1]{LNX} and \cite[Theorem 6.19]{HH}, the dimension of the $\SB(n;d)$-module $S_{(\lambda,\mu)}V_n$ does not depend on $q,Q$. 
Thus it has a basis indexed by the set of semistandard bitableaux of shape $\lambda,\mu$. 
\end{rem}

\begin{rem}
It would be interesting to relate our construction of the irreducibles to the results of Watanabe~\cite{Watanabe}, where the author constructs crystal basis for irreducible representations of $U^B_{Q,q} (\mathfrak{gl}_n)$ for $n$ odd. 
\end{rem}

\subsection{Schur functors in non-generic case}

Theorem \ref{thm:schurfunctortheorem} is not true when $Q,q$ are roots of unity or $\operatorname{char}k>0$. But that is only because the formulation of the result is not the right one. (See Remark \ref{rem:rouschurfunctor}.) In this subsection, we place the Schur functors in the right context.

The category $\mathcal{P}_{Q,q}$ is semisimple under the assumption of Theorem \ref{thm:schurfunctortheorem} and therefore can be viewed as a highest weight category where the irreducible, standard and costandard objects coincide. Then Theorem \ref{thm:schurfunctortheorem} is equivalent to saying that
the Schur functors $S_{(\lambda,\mu)}$ give a complete list of mutually non-isomorphic costandard objects in $\mathcal P_{Q,q}$.

It is proved in \cite[Theorem 3.1.1]{LNX}, assuming $f_d(Q,q)\neq 0$, that $\SB(n;d)$ is quasi-hereditary for all $n,d$. Then by Theorem \ref{thm:representability}, the categories $\PBd$ and $\mathcal P_{Q,q}$ are highest weight. In that case, we expect that the $S_{(\lambda,\mu)}$ are the costandard objects in $\mathcal P_{Q,q}$ and the Weyl functors, which are defined by dualizing our definition of Schur functors, are the standard objects in $\mathcal P_{Q,q}$. We also expect that a direct proof of quasi-heredity using the Schur functors and Weyl functors similar to the approaches in \cite{ABW,KrauseHW} exists. 
We note that without the assumption $f_d(Q,q)\neq 0$, the algebra $\SB(n;d)$ is not quasi-hereditary in general (see \cite[Example 6.1.2]{LNX} and the remark thereafter). 

\subsection{Higher degree Schur functors}\label{subsec:higherschur}

We now assume $Q,q$ to be generic. 
Generalizing the functors $S^{d,e}_\pm,\wedge^{d,e}_\pm\in\mathcal{P}_{Q,q}^{d,e}$
defined in \S~\ref{BZstuff}, we can define Schur functors in $\mathcal{P}_{Q,q}^{d,e}$. We give an outline of this construction.

First define  $S^{\lambda,e}_+$ to be the largest quotient of $S^\lambda$ (here we denote by $S^\lambda$ the restriction of $S^\lambda=S^{\lambda_1}\otimes\cdots \otimes S^{\lambda_r}\in \mathcal{AP}_q^{d,e}$ to $\mathcal{P}_{Q,q}^{d,e}$) where $c_K^e\in \mathcal{H}^B_{Q,q}(d,e)$ has eigenvalues of the form $+Q^iq^j$, $i,j\in\mathbb Z$, and define similarly  $S^{\lambda,e}_-, \wedge^{\lambda,e}_\pm$. Then consider the higher degree analogue of the maps $T_{c(\lambda)}$ (see \eqref{schurfunctor}) and $T_{b,a}$ (see  Definition~\ref{defpmpower}), which are obtained by writing $T_{c(\lambda)}, T_{b,a}$ as a product of the standard generators $T_i$ in $\HB$  and replacing the $T_i$ with the higher degree generator $T_{w_i}\in \mathcal{H}^B_{Q,q}(d,e)$ (see \eqref{def:wi} and \eqref{def:w0}). 
The rest of the construction is now identical to that of the Schur functors in $\mathcal{P}^d_{Q,q}$ using Remark~\ref{rem:highercylinder}.

The higher degree Schur functors supply many non-trivial examples of polynomial functors in $\mathcal{P}^{d,e}_{Q,q}$.
Unlike in the case $e=1$, however, the Schur functors are decomposable in general. Their decomposition (even when $Q,q$ are generic) is a difficult and interesting problem. 
While we have little understanding on the higher degree Schur functors at the moment, we hope that they lead us to a structure theory of the categories $\mathcal{P}^{d,e}_{Q,q}$.

\bibliography{bib.bib}
\bibliographystyle{alpha}  

\end{document}